\documentclass{article}

\usepackage{amssymb,latexsym,amsmath,enumerate,verbatim,amsfonts,amsthm}
\usepackage{color,bm} 
\usepackage{graphicx}
\usepackage{algorithm,algorithmic}
\usepackage[colorinlistoftodos]{todonotes}

\newtheorem{theorem}{Theorem}[section]

\newtheorem{lemma}[theorem]{Lemma}
\newtheorem{proposition}[theorem]{Proposition}
\theoremstyle{definition}
\newtheorem{definition}[theorem]{Definition}

\numberwithin{equation}{section}
\allowdisplaybreaks[4]

\setlength\topmargin{-12pt}
\setlength\headheight{21.6pt}
\setlength\headsep{16.8pt}

\setlength\textheight{20.8cm}
\setlength\textwidth{14cm}


\def\R{{\mathbb{R}}}

\def\argmin{\mathop{\rm arg\,min}}
\def\Argmin{\mathop{\rm Arg\,min}}

\def\Alg{{\sf ManPPA}_{\sf rw}}

\begin{document}
\makeatletter

\begin{center}
\large{\bf Convergence analysis for a variant of manifold proximal point algorithm based on Kurdyka-{\L}ojasiewicz property}\footnote{This work is supported partly by the Hong Kong Research Grants Council (Grant No. PolyU153002/21p) and the Natural Science Foundation of China (Grant No. 12201389)}
\end{center}
\begin{center}
Dedicated to Professor Terry Rockafellar on the occasion of his 90th birthday
\end{center}\vspace{3mm}
\begin{center}

\textsc{Peiran Yu, Liaoyuan Zeng and Ting Kei Pong\footnote{Corresponding author}}\end{center}

\vspace{2mm}
	
{\footnotesize
\noindent\begin{minipage}{14cm}
	{\bf Abstract:} We incorporate an iteratively reweighted strategy in the manifold proximal point algorithm (ManPPA) in \cite{ChDeMaSo20} to solve an enhanced sparsity inducing model for identifying sparse yet nonzero vectors in a given subspace. We establish the global convergence of the whole sequence generated by our algorithm by assuming the Kurdyka-{\L}ojasiewicz (KL) properties of suitable potential functions. We also study how the KL exponents of the different potential functions are related. More importantly, when our enhanced model and algorithm reduce, respectively, to the model and ManPPA with constant stepsize considered in \cite{ChDeMaSo20}, we show that the sequence generated converges linearly as long as the optimal value of the model is positive, and converges finitely when the limit of the sequence lies in a set of weak sharp minima. Our results improve \cite[Theorem~2.4]{ChDeMaSo20_2}, which asserts local quadratic convergence in the presence of weak sharp minima when the constant stepsize is small.
\end{minipage}
\\[5mm]
		
\noindent{\bf Keywords:} {Kurdyka-{\L}ojasiewicz property, Linear convergence, Manifold proximal point algorithm}\\
\noindent{\bf Mathematics Subject Classification:} {68Q25, 68W40, 90C30}
}

\hbox to 14cm{\hrulefill}\par
		
		
\section{Introduction}

The problem of identifying sparse yet nonzero vectors in a given subspace arises in many contemporary applications such as orthogonal dictionary learning \cite{BaiSun} and other problems in machine learning \cite{ChDeMaSo20,Zibulevsky}. This fundamental problem is NP-hard in general \cite{Coleman86}, and efforts have been directed at developing tractable approaches for solving it. In the seminal work \cite{QuSun14}, the authors considered the following nonconvex formulation
\begin{equation}\label{sparsevector-p2}
	\begin{array}{cl}
		\min\limits_{x\in \R^{n}} & \|Y^T x\|_1\\
		{\rm s.t.} & \|x\|=1,
	\end{array}
\end{equation}
where $Y\in \R^{n\times p}$, and $\|\cdot\|_1$ and $\|\cdot\|$ are the $\ell_1$ and $\ell_2$ norms, respectively. The ability of identifying sparse nonzero vectors upon solving \eqref{sparsevector-p2} was formally established in \cite[Theorem~1]{QuSun14} when the subspace spanned by the {\em rows} of $Y$ follows a certain planted sparse model. An algorithm based on alternating minimization was developed therein for solving \eqref{sparsevector-p2}, which is guaranteed to return globally optimal solutions with high probability under suitable assumptions; see \cite[Theorem~2]{QuSun14} for details. More recently, the manifold proximal point algorithm (ManPPA) was developed in \cite{ChDeMaSo20} for solving \eqref{sparsevector-p2}, with subsequential convergence to stationary points established in \cite[Theorem~5.5]{ChMaSoZh20}. In addition, it was shown in \cite[Theorem~2.4]{ChDeMaSo20_2} that if \eqref{sparsevector-p2} has a set of weak sharp minima ${\cal X}$, then as long as the initial point is chosen sufficiently close to ${\cal X}$, the sequence $\{x^k\}$ generated by ManPPA with a sufficiently small constant stepsize converges {\em quadratically}.

Here, we enhance the sparsity inducing model \eqref{sparsevector-p2} and analyze the convergence of a natural adaptation of the ManPPA to solve the enhanced model. Specifically, we consider the following problem with a more general sparsity inducing regularizer $\Phi$:
\begin{equation}\label{P0}
  \begin{array}{rl}
    \min\limits_{x\in\mathbb{R}^n} &\displaystyle f(x): = \Phi ( |Y^Tx | ) =  \sum\limits_{i=1}^p\phi ( |[Y^Tx]_i | )\\
    {\rm s.t.} & x\in {\cal M}:= \{u\in \R^n:\; \|u\|=1\},
    \end{array}
\end{equation}
where $Y \in\mathbb{R}^{n\times p}$, $ |y| $ is the vector whose $ i $th entry is $ |y_i| $, and $ \phi: \R_+\rightarrow \R_+ $ is a continuous concave function with $ \phi(0)=0 $, and is differentiable on $(0,\infty)$; in addition, the limit $\ell:= \lim_{s\downarrow 0}\phi'(s)$ exists.
Our assumptions on $\phi$ are general enough to cover many widely used \emph{nonconvex} sparsity inducing regularizers such as the smoothly clipped absolute deviation (SCAD) function \cite{FanLi01}, the minimax concave penalty (MCP) function \cite{Zhang10}, and the log penalty function \cite{CaWaBo08}. Now, given the separable structure of the function $\Phi$, it is natural to adapt the iterative reweighting strategy (see, e.g., \cite{CaWaBo08,ChaYin08}) for developing algorithms for solving \eqref{P0}. Our algorithm, which we call the manifold proximal point algorithm with reweighting ($\Alg$), can be seen as a natural extension of the ManPPA in \cite{ChDeMaSo20}. The key difference is that, while each subproblem of the ManPPA involves the $\ell_1$-norm in the objective, we derive our subproblem objectives by applying the iteratively reweighted $\ell_1$ strategy to the $\phi$ in the objective of \eqref{P0}, resulting in objectives involving \emph{weighted} $\ell_1$ norms; see Algorithm~\ref{alg1} in Section~\ref{sec2}. Notably, when $\phi(s) = s$, our model \eqref{P0} and algorithm $\Alg$ reduce to \eqref{sparsevector-p2} and the ManPPA in \cite{ChDeMaSo20} with constant stepsize, respectively.

In this paper, we study the global convergence properties of $\Alg$ for solving \eqref{P0}. We establish the global convergence of the \emph{whole} sequence generated by assuming the Kurdyka-{\L}ojasiewicz (KL) property of suitable potential functions, whose definitions depend on whether $\phi_+'$ is locally Lipschitz.\footnote{Here and throughout, $ \phi'_+(s):=\lim_{h\downarrow 0}\dfrac{\phi(s+h)-\phi(s)}{h} $.} We then study how the KL exponents of the different potential functions are related, and derive {\em explicit} convergence rate of $\Alg$ when $\phi(s) = s$ by computing the KL exponent of the potential function associated with \eqref{sparsevector-p2}. Specifically, we show that when the optimal value of \eqref{sparsevector-p2} is positive, the KL exponent of the associated potential function is $1/2$, which implies that the sequence $\{x^k\}$ generated by $\Alg$ converges linearly. Furthermore, when a limit point of $\{x^k\}$ does belong to a set of weak sharp minima, the convergence is indeed finite. Our results improve \cite[Theorem~2.4]{ChDeMaSo20_2}, which asserts local quadratic convergence for small stepsizes in the presence of weak sharp minima.

The rest of the paper is organized as follows. We present preliminary materials and our algorithm $\Alg$ in Section~\ref{sec2}. Global convergence of the whole sequence is studied in Section~\ref{sec3}, and Section~\ref{sec4} is devoted to the study of KL exponents.

\section{Preliminaries}\label{sec2}

In this paper, we use $\R^n$ to denote the $n$-dimensional Euclidean space with inner product $\langle \cdot,\cdot\rangle$ and Euclidean ($\ell_2$) norm $\|\cdot\|$, and $ \|\cdot\|_1 $ and $\|\cdot\|_\infty$ to represent the $ \ell_1 $ and $\ell_\infty$ norms, respectively. For $x$, $y$ in $\R^n$, we let $x\circ y $ denote the entrywise product, and $|x|$ denote the vector whose $i$th entry is $|x_i|$. For an $x\in \R^n$ and $r\ge 0$, we let $B(x,r) = \{u:\; \|u - x\|\le r\}$. For a matrix $ A\in \R^{m\times n} $, we use $ \|A\|_2 $ to denote its operator norm.

For the $ \Phi $ in \eqref{P0}, we define
\[
\Phi_+'(y)= (\phi_+'(y_1),\phi_+'(y_2),\ldots,\phi_+'(y_p))\in \R^p;
\]
note that $\Phi_+'(y)\in \R^p_+$ as a consequence of \cite[Lemma~2.2(i)]{YuPong17}. An extended-real-valued function $ h:\R^n\to (-\infty, \infty] $ is proper if it is finite for at least one point $ x\in\R^n $, and the domain of $ h $ is defined as $ {\rm dom}\,h:=\{x: h(x)<\infty\} $. A proper function is said to be closed if it is lower semi-continuous. For a proper closed function $ h $, its regular subdifferential $ \widehat{\partial} h $, limiting subdifferential $ \partial h $ and horizon subdifferential $\partial^\infty h$ at $\bar x\in {\rm dom}\,h $ are defined respectively as:
\begin{equation*}
\begin{split}
& \widehat{\partial}h(\bar x)= \left\{\upsilon:\; \liminf_{x\rightarrow \bar x, x\neq \bar x} \frac{h(x)-h(\bar x)-\langle\upsilon, x-\bar x\rangle}{\|x-\bar x\|}\geq 0\right\}, \\
& \partial h(\bar x)= \left\{\upsilon:\; \exists x^k\overset{h}\rightarrow \bar x \text{ and } \upsilon^k \in \widehat{\partial} h(x^k) \text{ with } \upsilon^k \rightarrow \upsilon\right\},\\
& \partial^\infty h(\bar x)= \left\{\upsilon:\; \exists x^k\overset{h}\rightarrow \bar x, \ t_k \downarrow 0 \text{ and } \upsilon^k \in \widehat{\partial} h(x^k) \text{ with } t_k\upsilon^k \rightarrow \upsilon\right\},
\end{split}
\end{equation*}
where $ x^k\overset{h}\rightarrow \bar x $ means $ x^k\rightarrow \bar x $ and $ h(x^k)\rightarrow h(\bar x) $. We set by convention that $ \widehat{\partial}h(x)=\partial h(x)=\partial^\infty h(x)=\emptyset $ when $ x\notin {\rm dom}\,h $, and we define $ {\rm dom}\,\partial h = \{x: \partial h(x)\neq\emptyset \} $. For a function $ h:\R^n\to (-\infty, \infty] $ that is locally Lipschitz around $ x\in {\rm dom}\,h $, we say that $ h$ is regular at $ x $ if $ \widehat{\partial} h(x) = \partial h(x) $. Moreover, we recall from \cite[Corollary~8.11]{RoWe97} that for a proper closed function $h$, regularity at an $x\in {\rm dom}\,\partial h$ can be characterized as $ \widehat{\partial} h(x) = \partial h(x) $ and $  \partial^\infty h(x)= (\widehat{\partial} h(x))^\infty $, where $(\widehat{\partial} h(x))^\infty$ is the recession cone of the closed convex set $\widehat{\partial} h(x)$.

For a set $C\subseteq \R^n$ and a vector $a\in \R^n$, we define $a\circ C = \{a\circ c:\; c\in C\}$. The indicator function of $C$ is given by
\begin{equation*}
	\delta_C(x)=
	\begin{cases}
	0 & {\rm if\ } x\in C, \\
	\infty & {\rm if\ } x\notin C,
	\end{cases}
\end{equation*}
and the normal cone of $ C $ at an $ x \in C$ is defined as $ N_C(x) = \partial \delta_C(x) $. We also let ${\rm dist}(x,C) = \inf_{y\in C}\|x - y\|$ denote the distance from an $x\in \R^n$ to $C$.

We next recall the definition of Kurdyka-{\L}ojasiewicz (KL) property and exponent of proper closed functions. The KL property plays a pivotal role in the convergence analysis of many contemporary first-order methods; see, e.g., \cite{Attouch10,Attouch13}.
\begin{definition}[{{Kurdyka-{\L}ojasiewicz property and exponent}}]
	We say that a proper closed function $h:\R^n\to(-\infty,\infty]$ satisfies the Kurdyka-{\L}ojasiewicz (KL) property at an $\widehat x\in {\rm dom}\, \partial h$ if there are $a\in (0,\infty]$, a neighborhood $V$ of $\widehat{x}$ and a continuous concave function $\varphi: [0,a)\rightarrow [0,\infty) $ with $\varphi(0)=0$ such that
	\begin{enumerate}[{\rm (i)}]
		\item $\varphi$ is continuously differentiable on $(0, a)$ with $\varphi'>0$ on $(0,a)$;
		\item for any $x\in V$ with $h(\widehat{x})<h(x)<h(\widehat{x})+a$, it holds that
		\begin{equation}\label{phichoice}
		\varphi'(h(x)-h(\widehat{x})){\rm dist}(0,\partial h(x))\ge 1.
		\end{equation}
	\end{enumerate}
	If $h$ satisfies the KL property at $\widehat x\in {\rm dom}\,\partial h$ and the $\varphi$ in \eqref{phichoice} can be chosen as $\varphi(\nu) = a_0\nu^{1-\alpha}$ for some $a_0>0$ and $\alpha\in[0,1)$, then we say that $h$ satisfies the KL property at $\widehat x$ with exponent $\alpha$.
	
	A proper closed function $h$ satisfying the KL property at every point in ${\rm dom}\, \partial h$ is called a KL function, and a proper closed function $h$ satisfying the KL property with exponent $\alpha\in [0,1)$ at every point in ${\rm dom}\, \partial h$ is called a KL function with exponent $\alpha$.
\end{definition}
KL functions are ubiquitous in contemporary applications. For example, proper closed semi-algebraic functions are KL functions with some exponent $ \alpha\in [0,1) $; see \cite[Corollary~16]{bolte2007}.

We next recall two other notions that will be used in our subsequent discussions: prox-regularity and weak sharp minima. Prox-regularity is a classical notion in variational analysis and we follow the definition in \cite{RoWe97}, and we refer the readers to \cite{Burke05} and references therein for more study on weak sharp minima of functions.
\begin{definition}[{Prox-regularity; see  \cite[Definition~13.27]{RoWe97}}]\label{DefPror}
	The function $h:\mathbb{R}^n\to \mathbb{R}\cup\{-\infty, \infty \}$ is prox-regular at $\bar x$ for $\bar v$ if $h$ is finite and locally lower semicontinuous at $\bar x$ with $\bar v\in\partial h(\bar x)$, and there exist $\epsilon>0$ and $r\ge 0$ such that
	\[
	h(x')\ge h(x) + \langle v,x' - x\rangle  - \frac{r}{2}\|x' - x\|^2 \ \ \ \ \forall x'\in B(\bar x,\epsilon)
	\]
	whenever $v\in\partial h(x)$,  $\|v - \bar v\|<\epsilon$, $\|x - \bar x\|<\epsilon$ and $h(x) <h(\bar x)+\epsilon$. 	
	We say that $h$ is prox-regular at $\bar x$ if $h$ is prox-regular at $\bar x$ for all $\bar v\in\partial h(\bar x)$.
\end{definition}
\begin{definition}[Weak sharp minima; see, e.g., \cite{BuFe93}]\label{sharpdef}
	We say that $\mathcal{X}\subseteq {\rm dom}\, h$ is a set of weak sharp minima for a proper function $h:\R^n\to (-\infty,\infty]$ with parameter $(\alpha,\delta)$ for some  $\alpha>0$ and $\delta>0$ if
	\[
	h(x)\ge h(\bar x) +  \alpha\, {\rm dist}(x,\mathcal{X})\ \ \ \ \  \forall \bar x\in\mathcal{X}\ {\rm and}\ \forall x\in
\mathfrak{B}(\delta): = \{u:\; {\rm dist}(u,\mathcal{X})\le \delta\}.
	\]
\end{definition}

One can see immediately from the definition of weak sharp minima that $h$ is constant on such a set.
Next, we present some properties of the objective function and feasible set in \eqref{P0} that will be used subsequently. The lemma below characterizes the subdifferential of $ f $ in \eqref{P0}.
\begin{lemma}\label{flip}
Let $ f $ be given in \eqref{P0}. Then $ f $ is Lipschitz continuous with modulus $ L_f :=\ell \sqrt{p}\|Y\|_2 $. Moreover, $ f $ is regular and $ \partial f(x) = Y\left(\Phi'_+(|Y^Tx|)\circ \partial\|Y^Tx\|_1\right) $.
\end{lemma}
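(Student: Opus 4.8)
The plan is to write $f$ as a composition $f = \Phi \circ g$ where $g(x) = |Y^T x|$, or more precisely to decompose $f(x) = \sum_{i=1}^p \phi(|[Y^Tx]_i|)$ and handle it via the chain rule together with the known structure of $\phi$. First I would record the elementary facts about $\phi$: since $\phi$ is concave on $\R_+$ with $\phi(0)=0$, it is nondecreasing (using $\phi(0)=0$ and concavity, or invoking \cite[Lemma~2.2(i)]{YuPong17} as already cited), hence $\Phi'_+(y)\in\R^p_+$; moreover $\phi'_+$ is finite everywhere on $\R_+$ with $\phi'_+(s)\le \phi'_+(0)=\ell$ for all $s\ge 0$ by concavity. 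This immediately gives the Lipschitz bound: for any $x$, every element of $\partial\|Y^Tx\|_1$ has $\ell_\infty$-norm at most $1$, so every element of $Y(\Phi'_+(|Y^Tx|)\circ\partial\|Y^Tx\|_1)$ has norm at most $\|Y\|_2 \cdot \|\Phi'_+(|Y^Tx|)\circ v\| \le \|Y\|_2\cdot \ell\sqrt p$ once the subdifferential formula is established; alternatively one gets Lipschitz continuity directly from $|\phi(a)-\phi(b)|\le \ell|a-b|$ and the triangle inequality, which is the cleaner route and should be done first so that local Lipschitzness is available when applying subdifferential calculus.

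Next, for the subdifferential formula, I would treat each summand $x\mapsto \phi(|[Y^Tx]_i|)$. Writing $t = [Y^Tx]_i = y_i^T x$ where $y_i$ is the $i$th column of $Y$, the scalar function $s\mapsto \phi(|s|)$ is locally Lipschitz, and since $\phi$ is concave and nondecreasing on $\R_+$, the function $s\mapsto\phi(|s|)$ is in fact regular with $\partial(\phi(|\cdot|))(s) = \phi'_+(|s|)\,\partial|\cdot|(s)$ — for $s\neq 0$ this is the ordinary chain rule, and for $s=0$ one checks directly that $\widehat\partial(\phi(|\cdot|))(0) = [-\ell,\ell] = \ell\,\partial|\cdot|(0)$ using that $\phi(|s|) = \ell|s| + o(|s|)$ near $0$ (the $o$ term being concave, hence the regular subdifferential at $0$ is exactly $[-\ell,\ell]$), and that the limiting subdifferential coincides since $\phi'_+$ is continuous from the right — here the hypothesis that $\ell = \lim_{s\downarrow 0}\phi'(s)$ exists is exactly what makes the calculus clean. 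Composing with the linear map $x\mapsto y_i^Tx$ (which has full-rank-on-its-range behavior, so the chain rule \cite[Exercise~10.7]{RoWe97} applies without qualification and preserves regularity) gives $\partial_x[\phi(|y_i^Tx|)] = \phi'_+(|y_i^Tx|)\, y_i\,\partial|\cdot|(y_i^Tx) = y_i\big(\phi'_+(|[Y^Tx]_i|)\cdot[\partial\|Y^Tx\|_1]_i\big)$, where I use that $\partial\|\cdot\|_1$ is separable across coordinates. Finally, summing over $i$ and using that the sum rule \cite[Corollary~10.9]{RoWe97} holds with equality and preserves regularity when each summand is regular and locally Lipschitz, I obtain $\partial f(x) = \sum_i y_i\big(\phi'_+(|[Y^Tx]_i|)[\partial\|Y^Tx\|_1]_i\big) = Y\big(\Phi'_+(|Y^Tx|)\circ\partial\|Y^Tx\|_1\big)$, and regularity of $f$ follows from regularity of each summand.

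The main obstacle — and the only place requiring genuine care rather than bookkeeping — is the behavior at points $x$ with $[Y^Tx]_i = 0$ for some $i$: one must verify that the regular and limiting subdifferentials of $s\mapsto\phi(|s|)$ at $s=0$ both equal $[-\ell,\ell]$, which is precisely where the assumed existence of $\ell = \lim_{s\downarrow0}\phi'(s)$ is used, and one must confirm that the identity $\phi'_+(0) = \ell$ holds (so that the formula $\Phi'_+(|Y^Tx|)$ with the stated definition of $\phi'_+$ is consistent at zero arguments). A secondary, more routine point is ensuring the chain-rule and sum-rule qualification conditions hold; these are automatic here because every function involved is finite-valued and locally Lipschitz, so the horizon subdifferentials are trivial and no constraint qualification is needed. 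Once these two points are dispatched, the rest is a direct assembly.
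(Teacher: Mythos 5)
Your proposal is correct and follows essentially the same route as the paper: the Lipschitz bound via $|\phi(a)-\phi(b)|\le\ell|a-b|$ and the chain $\ell\|Y^T(x-y)\|_1\le\ell\sqrt p\,\|Y\|_2\|x-y\|$ is exactly the paper's computation, and the subdifferential formula is obtained in both cases by establishing regularity of $\Phi(|\cdot|)$ with $\partial\Phi(|\cdot|)(y)=\Phi_+'(|y|)\circ\partial\|y\|_1$ and then composing with the linear map $x\mapsto Y^Tx$ via the chain rule of \cite[Theorem~10.6]{RoWe97}. The only difference is that you prove the scalar facts (regularity of $s\mapsto\phi(|s|)$ at $0$, and $\widehat\partial(\phi(|\cdot|))(0)=\partial(\phi(|\cdot|))(0)=[-\ell,\ell]$ with $\phi_+'(0)=\ell$) from scratch, whereas the paper delegates this step to \cite[Lemma~2.3]{YuPong17}.
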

\begin{proof}
 Since $ \phi $ is differentiable on $ (0,\infty) $ and continuous on $[0,\infty)$, for any $ x$, $y\in\R^n $, there exists $ \theta\in\R^p_+ $ such that
	\[
	\begin{split}
	& f(x)-f(y) = \sum\limits_{i=1}^p \left[ \phi ( |[ Y^Tx]_i | ) -  \phi ( |[ Y^Ty]_i | ) \right] = \sum\limits_{i=1}^p \left[\phi'_+(\theta_i) ( |[Y^Tx]_i | -  |[Y^Ty]_i | )\right] \\
	& \overset{\rm (a)}\leq \ell  \| |Y^Tx | -  |Y^Ty |  \|_1\leq \ell  \|Y^T(x-y) \|_1 \leq \ell\sqrt{p}\|Y^T(x-y)\| \leq \ell\sqrt{p}\|Y\|_2\|x-y\|,
	\end{split}
	\]
where we use \cite[Lemma~2.2(i)]{YuPong17} in (a). This proves the claim on Lipschitz continuity.

Next, following the arguments in \cite[Lemma~2.3]{YuPong17}, we see that $ \Phi(|\cdot|) $ is regular and $ \partial \Phi(|\cdot|)(y) = \Phi'_+(|y|)\circ \partial \|y\|_1 $. This together with \cite[Theorem~10.6]{RoWe97} gives
$\partial f(x) = Y\left(\Phi'_+(|Y^Tx|)\circ \partial\|Y^Tx\|_1\right)$.
\end{proof}

The following lemma gives some properties of the feasible set $ \mathcal M $ in \eqref{P0}, where item (iii) is a special case of \cite[Fact~3.6]{ChMaSoZh20} and its proof can be found in \cite{ChDeMaSo20_2}. We include the item here for ease of reference.
\begin{lemma}\label{deltaM}
Let $ {\cal M}=\{x\in \R^n:\;\|x\|=1\} $. Then the following statements hold.
\begin{enumerate}[{\rm (i)}]
	\item For any $ x\in\cal M $, $ \delta_{\cal M} $ is regular at $ x $ and $ \partial \delta_{\mathcal M}(x) = N_{\mathcal M}(x) = \{\lambda x:\;\lambda \in \R\} $.
	\item For any $ x\in\mathcal M $ and $ \zeta\in\R^n $, we have $ \inf_{y\in N_\mathcal M(x)}\|\zeta+y\| = \|(I-xx^T)\zeta\| $.
	\item For any $ x\in\cal M $ and $ d$ satisfying $\langle x,d\rangle = 0$, we have
	\begin{equation}\label{retrprop_000}
		\left\|\dfrac{x+d}{\|x+d\|} - x\right\| \leq \|d\|,
	\end{equation}
	and
	\begin{equation}\label{retrprop}
		\left\|\dfrac{x + d}{\|x+d\|}-(x+d) \right\|\leq \frac12 \|d\|^2.
	\end{equation}
\end{enumerate}
\end{lemma}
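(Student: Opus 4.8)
The plan is to handle the three items essentially independently: items~(i) and~(ii) by direct computation on the unit sphere, and item~(iii) by reduction to the cited fact (or an elementary scalar inequality).

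For item~(i), I would first identify $\partial\delta_{\mathcal M}(x)=N_{\mathcal M}(x)$ and $\widehat\partial\delta_{\mathcal M}(x)=\widehat N_{\mathcal M}(x)$, so that everything reduces to computing the normal cones of $\mathcal M$ and checking they coincide. The quickest route is to observe that $\mathcal M=\{x:\|x\|^2-1=0\}$ is a $C^\infty$ equality-constrained set whose constraint gradient $2x$ never vanishes on $\mathcal M$, so the standard normal-cone formula for such sets (e.g.\ \cite[Example~6.8]{RoWe97}) yields at once that $\mathcal M$ is (Clarke) regular at every $x\in\mathcal M$ with $N_{\mathcal M}(x)=\{2\lambda x:\lambda\in\R\}=\{\lambda x:\lambda\in\R\}$; regularity of the set is by definition regularity of $\delta_{\mathcal M}$. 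If a self-contained argument is preferred, one shows $\{\lambda x:\lambda\in\R\}\subseteq\widehat N_{\mathcal M}(x)$ from the identity $\langle x,x'\rangle=1-\tfrac12\|x-x'\|^2$ for $x,x'\in\mathcal M$ (which gives $\langle\lambda x,x'-x\rangle=-\tfrac{\lambda}{2}\|x-x'\|^2=o(\|x'-x\|)$), and the reverse inclusion by writing $v\in\widehat N_{\mathcal M}(x)$ as $v=\lambda x+w$ with $\langle w,x\rangle=0$ and testing against the feasible curve $x'(t):=(x+tw)/\|x+tw\|\in\mathcal M$; invoking \eqref{retrprop_000}--\eqref{retrprop} one gets $x'(t)-x=tw+O(t^2)$, hence $\langle v,x'(t)-x\rangle=t\|w\|^2+O(t^2)$, which forces $w=0$. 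Since $\widehat N_{\mathcal M}$ has the same form at every nearby point of $\mathcal M$, passing to the outer limit gives $N_{\mathcal M}(x)=\{\lambda x:\lambda\in\R\}=\widehat N_{\mathcal M}(x)$.

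For item~(ii), item~(i) reduces the infimum to $\inf_{\lambda\in\R}\|\zeta+\lambda x\|$. Using $\|x\|=1$, one checks $\langle x,(I-xx^T)\zeta\rangle=0$, so $\zeta+\lambda x=(I-xx^T)\zeta+(\langle\zeta,x\rangle+\lambda)x$ is an orthogonal decomposition and $\|\zeta+\lambda x\|^2=\|(I-xx^T)\zeta\|^2+(\langle\zeta,x\rangle+\lambda)^2$. This is minimized at $\lambda=-\langle\zeta,x\rangle$, giving $\inf_{y\in N_{\mathcal M}(x)}\|\zeta+y\|=\|(I-xx^T)\zeta\|$.

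For item~(iii), the plan is simply to record it as the stated special case of \cite[Fact~3.6]{ChMaSoZh20}, whose proof appears in \cite{ChDeMaSo20_2}; if desired, a direct proof is immediate by setting $t:=\|x+d\|=\sqrt{1+\|d\|^2}\ge 1$ (using $\langle x,d\rangle=0$), computing $\|\tfrac{x+d}{\|x+d\|}-x\|^2=2-\tfrac2t$ and $\|\tfrac{x+d}{\|x+d\|}-(x+d)\|=t-1$, and noting that \eqref{retrprop_000} and \eqref{retrprop} then reduce to $t^3-3t+2=(t-1)^2(t+2)\ge 0$ and $t\ge 1$, respectively. I do not expect a genuine obstacle here: this lemma is a warm-up, and the only point needing a little care is the regularity assertion in~(i), for which leaning on the smooth-manifold normal-cone formula in \cite{RoWe97} is cleaner than redoing the outer-limit computation by hand.
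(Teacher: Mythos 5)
Your proposal is correct and follows essentially the same route as the paper: item~(i) via the smooth-constraint normal-cone formula of \cite[Example~6.8]{RoWe97} (together with the identification of set regularity with regularity of the indicator), item~(ii) by reducing to $\inf_{\lambda\in\R}\|\zeta+\lambda x\|$ and the orthogonal projection onto $x^\perp$, and item~(iii) by citing the results of \cite{ChDeMaSo20_2}. The optional self-contained verifications you sketch --- the regular-normal-cone computation in (i) and the scalar reduction $t=\sqrt{1+\|d\|^2}$ with $(t-1)^2(t+2)\ge 0$ in (iii) --- are also correct, though the paper simply cites the references for these steps.
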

\begin{proof}
	According to \cite[Example~6.8]{RoWe97} and \cite[Exercise~8.14]{RoWe97}, we see that $ \delta_{\cal M} $ is regular at any $ x\in\cal M $ and $ \partial \delta_{\cal M}(x)=N_{\cal M}(x) = \{\lambda x:\; \lambda\in\R\} $. This proves item (i). Next, for any $x\in {\cal M}$ and $ \zeta\in\R^n $, we have
	\[
	\inf\limits_{y\in N_{\cal M}(x)} \|\zeta+y\| = \inf_{\lambda\in\R}\|\zeta+\lambda x\| = \left\|\zeta-\frac{x^T\zeta}{x^Tx}x\right\| = \|(I-xx^T)\zeta\|,
	\]
	where the last equality holds because $ \|x\| = 1 $. This proves item (ii). Finally, for item (iii), \eqref{retrprop_000} follows from \cite[Proposition~A.2]{ChDeMaSo20_2} and \eqref{retrprop} can be found in \cite[Proposition~A.1]{ChDeMaSo20_2}.
\end{proof}

We next recall the definition of stationary point of \eqref{P0}. Define
\begin{equation}\label{F}
F(x) = f(x) + \delta_\mathcal M(x).
\end{equation}
\begin{definition}[Stationary point]
A point $ x^*\in \R^n $ is called a {stationary point} of \eqref{P0} if $ 0\in \partial F(x^*) $.
\end{definition}
The next lemma concerns $ \partial F $.
\begin{lemma}\label{prop:partialF}
	Let $F$ be defined in \eqref{F}. Then for any $x\in {\rm dom}\,F$, it holds that
	\begin{equation}\label{partialF}
	\partial F(x) =  \partial f(x) + N_{\cal M}(x)=Y\left(\Phi'_+(|Y^Tx|)\circ \partial\|Y^Tx\|_1\right) + N_{\cal M}(x).
	\end{equation}
	Also, it holds that  $  {\rm dom}\,\partial F =\mathcal{M}$ and
	\begin{equation}\label{gjr0}
	{\rm dist}(0,\partial F(x)) = \inf_{\zeta\in\partial f(x)} \|(I-xx^T)\zeta\|\ \ \ \ \ \ \ \forall x\in\mathcal{M}.
	\end{equation}
\end{lemma}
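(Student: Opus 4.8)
The plan is to obtain \eqref{partialF} from the sum rule for limiting subdifferentials, and then read off the domain and the distance formula. First, recall from Lemma~\ref{flip} that $f$ is (globally, hence locally) Lipschitz continuous and regular at every $x\in\R^n$; in particular $\partial^\infty f(x)=\{0\}$ for all $x$. From Lemma~\ref{deltaM}(i), $\delta_{\mathcal M}$ is regular at every $x\in\mathcal M$, with $\partial\delta_{\mathcal M}(x)=N_{\mathcal M}(x)$. Since $\partial^\infty f(x)=\{0\}$, the constraint qualification $\partial^\infty f(x)\cap(-\partial^\infty\delta_{\mathcal M}(x))=\{0\}$ holds at every $x\in\mathcal M$, so the sum rule (e.g., \cite[Corollary~10.9]{RoWe97}) applies and gives $\partial F(x)=\partial f(x)+\partial\delta_{\mathcal M}(x)=\partial f(x)+N_{\mathcal M}(x)$ (and, as a byproduct, $F$ is regular at $x$). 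Substituting the formula $\partial f(x)=Y(\Phi'_+(|Y^Tx|)\circ\partial\|Y^Tx\|_1)$ from Lemma~\ref{flip} then yields \eqref{partialF}.

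For the claim $\mathrm{dom}\,\partial F=\mathcal M$: since $f$ is finite everywhere, $\mathrm{dom}\,F=\mathcal M$, and hence $\mathrm{dom}\,\partial F\subseteq\mathrm{dom}\,F=\mathcal M$. Conversely, for any $x\in\mathcal M$, the set $\partial f(x)$ is nonempty (it is the limiting subdifferential of a Lipschitz continuous function), and $0\in N_{\mathcal M}(x)$; thus $\emptyset\neq\partial f(x)\subseteq\partial f(x)+N_{\mathcal M}(x)=\partial F(x)$, so $x\in\mathrm{dom}\,\partial F$. This proves $\mathrm{dom}\,\partial F=\mathcal M$.

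Finally, to get \eqref{gjr0}, fix $x\in\mathcal M$ and use \eqref{partialF} to write the distance as a nested infimum:
\[
\mathrm{dist}(0,\partial F(x))=\inf_{\zeta\in\partial f(x)}\ \inf_{y\in N_{\mathcal M}(x)}\|\zeta+y\|=\inf_{\zeta\in\partial f(x)}\|(I-xx^T)\zeta\|,
\]
where the last equality is exactly Lemma~\ref{deltaM}(ii) applied to each $\zeta\in\partial f(x)$. There is no substantial obstacle in this argument; the only points needing care are verifying that the hypotheses of the sum rule --- regularity of both summands and the constraint qualification --- are in force, which is immediate from Lemmas~\ref{flip} and~\ref{deltaM}(i) together with the (global) Lipschitz continuity of $f$, and checking that the interchange of the two infima above is legitimate, which holds because the infimum of $\|\cdot\|$ over a sumset $A+B$ equals $\inf_{a\in A}\inf_{b\in B}\|a+b\|$.
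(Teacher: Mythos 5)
Your proposal is correct and follows essentially the same route as the paper: both apply the sum rule \cite[Corollary~10.9]{RoWe97} together with the regularity of $f$ (Lemma~\ref{flip}) and of $\delta_{\mathcal M}$ (Lemma~\ref{deltaM}(i)) to get \eqref{partialF}, and then derive \eqref{gjr0} from Lemma~\ref{deltaM}(ii). You merely spell out the constraint qualification and the nonemptiness of $\partial f(x)$ more explicitly than the paper does.
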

\begin{proof}
The relation \eqref{partialF} is a direct consequence of \cite[Corollary~10.9]{RoWe97}, Lemma~\ref{flip} and Lemma~\ref{deltaM}(i). In particular, we  see that $  {\rm dom}\,\partial F =\mathcal{M}$. We also see from \eqref{partialF} that for any $x\in\mathcal{M}$,
\begin{equation*}
	{\rm dist}(0,\partial F(x)) = \inf\limits_{\zeta\in\partial f(x)}\inf\limits_{y\in N_{\cal M}(x)} \|\zeta+y\| = \inf\limits_{\zeta\in\partial f(x)} \|(I-xx^T)\zeta\|,
\end{equation*}
where the last equality follows from Lemma~\ref{deltaM}(ii).
\end{proof}

Before ending this section, we present our algorithm for \eqref{P0} as Algorithm~\ref{alg1} below. This algorithm is a natural extension of the ManPPA (with constant stepsize) proposed and studied in \cite{ChDeMaSo20}, which was designed for solving \eqref{P0} when $\phi(s) = s$. Here, to tackle \eqref{P0} with a general $\phi$, we incorporate the classical iteratively reweighted $\ell_1$ strategy (see, e.g., \cite{CaWaBo08,ChaYin08}) in our algorithm so that one needs to solve the subproblem \eqref{subp} that involves a weighted $\ell_1$ norm in each iteration. We thus call our algorithm the manifold proximal point algorithm with reweighting ($\Alg$). We would like to point out that when $ \phi(s) = s $ in \eqref{P0}, our $ \Alg $ reduces to the ManPPA in \cite{ChDeMaSo20} with constant stepsize, whose corresponding subproblem was solved by suitably adapting the semi-smooth Newton augmented Lagrangian method (SSN-ALM); see \cite[Section~III.B]{ChDeMaSo20}. One can observe that the SSN-ALM can also be adapted to solve the subproblem \eqref{subp}, and we refer the readers to \cite[Section~III.B]{ChDeMaSo20} for further discussions on this subproblem solver.
\begin{algorithm}
	{
	\caption{$\Alg$ for problem \eqref{P0}\label{alg1}}
	\begin{algorithmic}
		\STATE  Choose $x^0\in\mathcal{M}$ and $t \in (0,2/L_f)$, where $L_f$ is defined in Lemma~\ref{flip}. Let $k=0$.
		\begin{description}
			\item[\bf Step 1.] Set $ u^k =\Phi'_+(|Y^Tx^k|) $. Compute
			\begin{equation}\label{subp}
				\begin{array}{rcl}
					d^k = &\displaystyle \argmin\limits_{d\in \mathbb{R}^n} &\displaystyle\sum\limits_{i=1}^p u_i^k|[Y^T(x^k+d)]_i| + \dfrac{1}{2t}\|d\|^2\\[13 pt]
					&{\rm s.t.} & \langle x^k,d\rangle = 0.
				\end{array}
			\end{equation}			
			\item[\bf Step 2.] Set
			\begin{align}\label{xkplus1}
				x^{k+1} = \frac{x^k +  d^k}{\|x^k + d^k\|}.
			\end{align}
			Update $k \leftarrow k+1$ and go to {\bf Step 1}.
		\end{description}
	\end{algorithmic}
}
\end{algorithm}

\section{Global convergence of $\Alg$ under KL property}\label{sec3}
In this section, we study the convergence properties of the sequence generated by $\Alg$. We first establish the subsequential convergence of the sequence $\{x^k\}$ generated by $\Alg$. Then we discuss the global convergence of $\{x^k\}$ by considering the KL property of suitable potential functions, whose definitions depend on whether $\phi_+'$ is locally Lipschitz.

Theorem~\ref{sta} concerns subsequential convergence. Items~(i) and (iv) were proved for the case $ \phi(s)=s $ in \cite[Lemma~5.2]{ChMaSoZh20} and \cite[Theorem~5.5]{ChMaSoZh20}, respectively. Our result allows for more general nonconvex $\phi$.
\begin{theorem}[Subsequential convergence of $\Alg$]\label{sta}
	Let $F$ be defined in \eqref{F} and $\{(x^k,u^k,d^k)\}$ be generated by $\Alg$. Then the following statements hold.
	\begin{enumerate}[{\rm (i)}]
		\item For every $ k\geq 0 $, it holds that
\[
F(x^{k+1})\leq F(x^k) - \left(\dfrac1t - \dfrac{L_f}2\right)\|d^k\|^2.
\]
		\item It holds that $\lim_{k\to\infty} \|d^k\| = 0$.
		\item For every $k\ge 0$, there exist $\lambda_k\in\R$ and $\widetilde \xi^{k+1}\in \partial\|Y^T(x^k + d^k)\|_1$ such that
		\begin{align}\label{opt}
		0= \dfrac{1}{t}d^k + Y(u^k\circ\widetilde\xi^{k+1}) + \lambda_k x^k.
		\end{align}
		In addition, the sequence $\{\lambda_k\}$ is bounded.
		\item Any accumulation point of $\{x^k\}$ is a stationary point of \eqref{P0}.
	\end{enumerate}
\end{theorem}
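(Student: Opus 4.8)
The plan is to establish the four items roughly in the stated order, with item~(iii) feeding into item~(iv).

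For item~(i), the key observation is that the subproblem~\eqref{subp} has a $\frac1t$-strongly convex objective over the subspace $\{d:\ \langle x^k,d\rangle=0\}$, which contains $d=0$. Comparing the optimal value at $d^k$ with the value at $d=0$ and using strong convexity, I would obtain
\[
\sum_{i=1}^p u_i^k\bigl(|[Y^T(x^k+d^k)]_i|-|[Y^Tx^k]_i|\bigr)\le -\frac1t\|d^k\|^2 .
\]
Next, since $\phi$ is concave with $\phi(0)=0$ and differentiable on $(0,\infty)$, for each $i$ the number $u_i^k=\phi'_+(|[Y^Tx^k]_i|)$ is a supergradient of $\phi$ at $|[Y^Tx^k]_i|$, so $\phi(s)\le\phi(|[Y^Tx^k]_i|)+u_i^k(s-|[Y^Tx^k]_i|)$ for all $s\ge 0$ (at the boundary point $0$ this uses $\phi'_+(0)=\ell$ together with the monotonicity of $s\mapsto\phi(s)/s$, which gives $\phi(s)\le\ell s$). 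Summing over $i$ with $s=|[Y^T(x^k+d^k)]_i|$ turns the displayed inequality into $f(x^k+d^k)\le f(x^k)-\frac1t\|d^k\|^2$. Finally, the Lipschitz continuity of $f$ from Lemma~\ref{flip} together with the retraction estimate~\eqref{retrprop} of Lemma~\ref{deltaM}(iii), applicable because $\langle x^k,d^k\rangle=0$, yields $f(x^{k+1})\le f(x^k+d^k)+\frac{L_f}{2}\|d^k\|^2$; combining the two bounds gives item~(i). Item~(ii) then follows by telescoping: $F$ is bounded below on $\mathcal M$ (as $f\ge 0$) and is nonincreasing along $\{x^k\}$ by item~(i) with $\frac1t-\frac{L_f}{2}>0$ since $t\in(0,2/L_f)$, so $\sum_k\|d^k\|^2<\infty$ and hence $\|d^k\|\to 0$.

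For item~(iii), I would write down the optimality condition of the convex subproblem~\eqref{subp}. Using the subdifferential sum and chain rules (the qualification is trivial, since the weighted-$\ell_1$ term and the quadratic term are finite-valued and the relative interior of the constraint subspace is the subspace itself) and Lemma~\ref{deltaM}(i) for the normal cone of the constraint, which is $\{\lambda x^k:\ \lambda\in\R\}$, gives $0\in\frac1t d^k+Y\bigl(u^k\circ\partial\|Y^T(x^k+d^k)\|_1\bigr)+\{\lambda x^k:\ \lambda\in\R\}$, i.e. the existence of $\lambda_k\in\R$ and $\widetilde\xi^{k+1}\in\partial\|Y^T(x^k+d^k)\|_1$ satisfying~\eqref{opt}. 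Boundedness of $\{\lambda_k\}$ follows by isolating $\lambda_k x^k$ in~\eqref{opt}, taking norms, and using $\|x^k\|=1$, the boundedness of $\|d^k\|$ from item~(ii), $\|\widetilde\xi^{k+1}\|_\infty\le 1$, and $0\le u^k_i=\phi'_+(|[Y^Tx^k]_i|)\le\phi'_+(0)=\ell$ (monotonicity of $\phi'_+$).

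For item~(iv), let $x^*$ be an accumulation point, say $x^{k_j}\to x^*$ (one exists since $\mathcal M$ is compact), and note $x^*\in\mathcal M$ as $\mathcal M$ is closed. Passing to a further subsequence I may assume $\lambda_{k_j}\to\lambda^*$ (using item~(iii)) and $\widetilde\xi^{k_j+1}\to\xi^*$ (bounded entries). Because $\phi'$ is continuous on $(0,\infty)$ — being the derivative of a concave function — and $\phi'_+(0)=\ell=\lim_{s\downarrow0}\phi'(s)$, the map $\Phi'_+$ is continuous, so $u^{k_j}\to\Phi'_+(|Y^Tx^*|)$. Since $d^{k_j}\to 0$ by item~(ii), we have $Y^T(x^{k_j}+d^{k_j})\to Y^Tx^*$, and the closedness of the graph of $\partial\|\cdot\|_1$ gives $\xi^*\in\partial\|Y^Tx^*\|_1$. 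Letting $j\to\infty$ in~\eqref{opt} yields $0=Y\bigl(\Phi'_+(|Y^Tx^*|)\circ\xi^*\bigr)+\lambda^*x^*\in\partial f(x^*)+N_{\mathcal M}(x^*)=\partial F(x^*)$ by Lemma~\ref{flip} and Lemma~\ref{prop:partialF}; hence $x^*$ is a stationary point of~\eqref{P0}.

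I expect the main obstacle to be obtaining the sharp coefficient $\frac1t-\frac{L_f}{2}$ in item~(i): naively comparing the subproblem value at $d^k$ with that at $d=0$ only gives $\frac1{2t}-\frac{L_f}{2}$, which is not enough to guarantee descent for every $t\in(0,2/L_f)$, so the strong convexity of the subproblem objective must be exploited to gain the extra factor. A secondary point requiring care is the continuity of $\Phi'_+$ used when passing to the limit in item~(iv), which rests on the standing assumption that $\ell=\lim_{s\downarrow0}\phi'(s)$ exists together with the automatic continuity of $\phi'$ on $(0,\infty)$.
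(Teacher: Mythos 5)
Your proposal is correct and follows essentially the same route as the paper: the $\tfrac1t$ (rather than $\tfrac1{2t}$) coefficient in item~(i) is obtained exactly as you anticipate, by exploiting the $t^{-1}$-strong convexity of the subproblem objective when comparing $d^k$ with $d=0$, then combining the concavity (supergradient) inequality for $\phi$ with the Lipschitz bound on $f$ and the retraction estimate \eqref{retrprop}, and items (ii)--(iv) proceed by the same telescoping, KKT/normal-cone, and subsequence-limit arguments. The only cosmetic differences are that you bound $|\lambda_k|$ by taking norms in \eqref{opt} rather than pairing with $x^k$ (which makes the $d^k$ term vanish), and that the normal cone to the linear constraint $\langle x^k,d\rangle=0$ is more directly the orthogonal complement $\{\lambda x^k:\lambda\in\R\}$ than an instance of Lemma~\ref{deltaM}(i); neither affects correctness.
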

\begin{proof}
 For every $ k\geq 0 $, notice that the objective function of \eqref{subp} is strongly convex with modulus $t^{-1}$, $ d=0 $ is feasible for \eqref{subp} and $ d^k $ is optimal for \eqref{subp}. Based on these observations, we can see that
	\begin{equation}
		\sum\limits_{i=1}^{p}u_i^k |[Y^T(x^k+d^k)]_i| \leq \sum\limits_{i=1}^{p}u_i^k |[Y^Tx^k]_i| - \frac{1}{t}\|d^k\|^2 \label{sc_subp}.
	\end{equation}
One can then deduce using the definitions of $ F $ and $ x^{k+1} $ and Lemma~\ref{flip} that
	\begin{equation*}
	\begin{split}
		& F(x^{k+1}) = f\left(\frac{x^k+ d^k}{\|x^k+ d^k\|}\right) \leq f(x^k+d^k) + L_f\left\|\frac{x^k+ d^k}{\|x^k+ d^k\|} - (x^k+d^k)\right\|\\
		& \overset{\rm (a)}\leq  f(x^k+d^k) + \frac{L_f}2\|d^k\|^2  = \sum_{i=1}^p \phi(|[Y^T(x^k+d^k)]_i|) + \frac{L_f}2\|d^k\|^2 \\
		& \overset{\rm (b)}\le F(x^k) + \sum_{i=1}^p u_i^k(|[Y^T(x^k+d^k)]_i| - |[Y^Tx^k]_i|) + \frac{L_f}2\|d^k\|^2 \\
		& \overset{\rm (c)}\leq F(x^k) - \frac1t \|d^k\|^2 + \frac{L_f}2\|d^k\|^2 = F(x^k) - \left(\dfrac1t - \dfrac{L_f}2\right)\|d^k\|^2,
	\end{split}
	\end{equation*}
	where (a) follows from \eqref{retrprop} with $x=x^k$ and $ d=d^k $, (b) follows from the concavity of $ \phi $ and the definition of $ u^k $, and we use \eqref{sc_subp} in (c). This proves item (i).
	
	Next, from item (i) and the assumption $ t < 2/{L_f} $, we see that $\{F(x^k)\}$ is nonincreasing. This together with $F(x^{k})\ge 0$ for all $k\ge0$ implies that $\lim_{k\to\infty}F(x^k) = \inf_k F(x^k)\ge 0$. Summing the inequality in item (i) from $k=0$ to $\infty$, we obtain
	\[
	\left(\dfrac1t - \dfrac{L_f}2\right)\sum_{k=0}^\infty\|d^k\|^2\le F(x^0) - \lim_{k\to\infty} F(x^k)\leq F(x^0)< \infty.
	\]
	We then have $ \lim_{k\to\infty}\|d^k\|=0 $. This proves item (ii).
	
	For item (iii), using the definition of $d^k$ as the minimizer of \eqref{subp} and \cite[Corollary~28.2.2]{Ro70}, we see that for every $ k \ge 0$, there exists $\lambda_k\in \R$ such that
	\[
	\begin{split}
	-\frac1t d^k -\lambda_kx^k &\in \partial \left(\sum\limits_{i=1}^p u^k_i\left|[Y^T(x^k+\cdot)]_i\right|\right)(d^k)  = \sum\limits_{i=1}^p \partial \left(u^k_i \left|[Y^T(x^k+\cdot)]_i\right|\right)(d^k) \\
	& = \sum\limits_{i=1}^p u^k_i \partial \left(\left|\left([Y^T(x^k+\cdot)]_i\right) \right|\right)(d^k) =  Y\left(u^k\circ \partial\|Y^T(x^k+d^k)\|_1\right),
	\end{split}
	\]
	where we recall that $ u_i^k = \Phi'_+(|[Y^Tx^k]_i|)\in\R^p_+ $ (see \cite[Lemma~2.2(i)]{YuPong17}), and the equalities follow from \cite[Theorem~23.8]{Ro70}.
	
	Now we show that $\{\lambda_k\}$ is bounded. Taking the inner product with $x^k$ on both sides  of \eqref{opt} and noting that $\|x^k\|=1$ and $\langle x^k,d^k\rangle = 0$, we obtain
\[
\lambda_k = - \langle {x^k}, Y(u^k\circ \widetilde\xi^{k+1})\rangle.
\]
Since $\widetilde \xi^{k+1}$ is a subgradient of the $\ell_1$ norm, we know that $\{\widetilde \xi^{k+1}\}$ is bounded. Moreover, we see from the definition of $ u^k $ and \cite[Lemma~2.2(i)]{YuPong17} that $ \{u^k\}\in [0, \ell]^p  $. These imply that $ \{Y(u^k\circ \widetilde\xi^{k+1})\} $ is bounded, and thus $\{\lambda_k\}$ is bounded.
	
Finally, we prove item (iv). Let $\widehat x$ be an accumulation point of $\{x^k\}$ and let $ \{x^{k_j}\} $ be a convergent subsequence with $ \lim_{j\rightarrow\infty} x^{k_j} = \widehat x $. Then $\widehat x\in {\cal M}$. Since the sequences $\{d^k\} $, $\{u^k\}$, $ \{\widetilde \xi^k\} $ and $ \{\lambda_k\} $ are bounded, by passing to further subsequences if necessary, we may assume without loss of generality that $ \lim_{j\rightarrow\infty}(d^{k_j},  \widetilde \xi^{k_j+1}, \lambda_{k_j}) = (\widehat d,  \widehat\xi, \widehat \lambda )$ for some $(\widehat d, \widehat\xi, \widehat \lambda )$ and $ \lim_{j\rightarrow \infty} u^{k_j} = \widehat u := \Phi_+^\prime(|Y^T\widehat{x}|) $. Passing to the limit as $ j\rightarrow \infty $ in \eqref{opt} with $k = k_j$, we obtain that
	\[
	0 = \frac 1t \widehat d + Y\left(\widehat u\circ \widehat \xi\right) + \widehat\lambda \widehat x = Y\left(\widehat u\circ \widehat \xi\right) + \widehat\lambda \widehat x,
	\]
where the second equality holds because $ \widehat d = 0 $ according to item (ii). In addition, note that
	\[
	\widehat\xi\in \partial \|Y^T(\widehat x+\widehat d)\|_1 = \partial \|Y^T\widehat x\|_1,
	\]
	where the inclusion follows from the definition of $ \{\widehat\xi^{k}\} $ and the closedness of $ \partial \|\cdot\|_1 $, and the equality holds because $ \widehat d = 0 $ according to item (ii). Combining the above two displays with \eqref{partialF} and Lemma~\ref{deltaM}(i), we conclude that $ 0\in \partial F(\widehat x) $, which means that $ \widehat x $ is a stationary point of \eqref{P0}.
\end{proof}


\subsection{Global convergence when $\phi_+'$ is locally Lipschitz}

While the $\{x^k\}$ generated by $\Alg$ is known to cluster at a stationary point of \eqref{P0} according to Theorem~\ref{sta}(iv), additional conditions are usually required for establishing the convergence of the whole sequence. Here, we discuss the global convergence of $\Alg$ by assuming that $\phi_+'$ is locally Lipschitz and that the $F$ in \eqref{F} satisfies the KL property. We start with the following lemma concerning bounds on the least norm element of $\partial F$.
\begin{lemma}\label{distbound}
	Let $F$ be defined in \eqref{F} and $\phi_+':\R_+\to \R_+$ be locally Lipschitz. Let $\{(x^k,d^k)\}$ be generated by $\Alg$. Then there exists $C>0$ such that
	\begin{equation*}
		{\rm dist}(0,\partial F(x^{k+1}))\le \left(C + \frac1t\right)\|d^k\|\ \ \ \ \  \forall k\ge 0.
	\end{equation*}
\end{lemma}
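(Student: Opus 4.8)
The plan is to produce an element of $\partial F(x^{k+1})$ whose norm is controlled by $\|d^k\|$, and then invoke \eqref{gjr0} to bound the distance. The natural candidate comes from the optimality condition \eqref{opt} of the subproblem: we have $\frac1t d^k + Y(u^k\circ\widetilde\xi^{k+1}) + \lambda_k x^k = 0$ with $\widetilde\xi^{k+1}\in\partial\|Y^T(x^k+d^k)\|_1$. Since $\|x^k+d^k\|\ge 1$ (because $\langle x^k,d^k\rangle=0$ and $\|x^k\|=1$, so $\|x^k+d^k\|^2 = 1 + \|d^k\|^2$) and the $\ell_1$ norm is positively homogeneous, the subgradient set is scale-invariant along rays, so $\widetilde\xi^{k+1}\in\partial\|Y^T x^{k+1}\|_1$ as well; here $x^{k+1} = (x^k+d^k)/\|x^k+d^k\|$. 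Thus $Y(u^{k+1}\circ\widetilde\xi^{k+1})$ would lie in $\partial f(x^{k+1})$ by Lemma~\ref{flip} \emph{if} we replace $u^k$ by $u^{k+1} = \Phi_+'(|Y^Tx^{k+1}|)$. The first key step is therefore to bound the mismatch $\|Y(u^k\circ\widetilde\xi^{k+1}) - Y(u^{k+1}\circ\widetilde\xi^{k+1})\|$.

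For that mismatch, I would estimate $\|Y\|_2 \cdot \|(u^k - u^{k+1})\circ\widetilde\xi^{k+1}\|$. Since $\widetilde\xi^{k+1}\in\partial\|\cdot\|_1$, we have $\|\widetilde\xi^{k+1}\|_\infty\le 1$, so this is at most $\|Y\|_2\,\|u^k - u^{k+1}\|$. Now $u^k - u^{k+1} = \Phi_+'(|Y^Tx^k|) - \Phi_+'(|Y^Tx^{k+1}|)$, and since $\phi_+'$ is locally Lipschitz and the relevant arguments $|Y^Tx^k|,|Y^Tx^{k+1}|$ all lie in the compact set $\{|Y^Tx|:\|x\|=1\}$, there is a uniform Lipschitz constant $L_\phi$ on that set, giving $\|u^k - u^{k+1}\|\le L_\phi\,\||Y^Tx^k| - |Y^Tx^{k+1}|\|\le L_\phi\|Y^T(x^k - x^{k+1})\|\le L_\phi\|Y\|_2\|x^k - x^{k+1}\|$. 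Finally, $\|x^{k+1} - x^k\| = \|(x^k+d^k)/\|x^k+d^k\| - x^k\|\le\|d^k\|$ by Lemma~\ref{deltaM}(iii), inequality \eqref{retrprop_000}. Combining, the mismatch is bounded by $L_\phi\|Y\|_2^2\|d^k\|$; call this $C_1\|d^k\|$.

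Putting the pieces together: from \eqref{opt}, $Y(u^{k+1}\circ\widetilde\xi^{k+1}) + \lambda_k x^k = -\frac1t d^k - \bigl(Y(u^k\circ\widetilde\xi^{k+1}) - Y(u^{k+1}\circ\widetilde\xi^{k+1})\bigr)$, so the right-hand side has norm at most $(\frac1t + C_1)\|d^k\|$. The left-hand side is $\zeta^{k+1} + \lambda_k x^k$ with $\zeta^{k+1} := Y(u^{k+1}\circ\widetilde\xi^{k+1})\in\partial f(x^{k+1})$. Using \eqref{gjr0} with $x = x^{k+1}$, ${\rm dist}(0,\partial F(x^{k+1})) = \inf_{\zeta\in\partial f(x^{k+1})}\|(I - x^{k+1}(x^{k+1})^T)\zeta\|\le\|(I - x^{k+1}(x^{k+1})^T)\zeta^{k+1}\|$. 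Since $I - x^{k+1}(x^{k+1})^T$ is an orthogonal projection (norm $\le 1$) that annihilates $x^{k+1}$, I would write $(I - x^{k+1}(x^{k+1})^T)\zeta^{k+1} = (I - x^{k+1}(x^{k+1})^T)(\zeta^{k+1} + \lambda_k x^k) - \lambda_k(I - x^{k+1}(x^{k+1})^T)x^k$, bound the first term by $\|\zeta^{k+1}+\lambda_k x^k\|\le(\frac1t + C_1)\|d^k\|$, and bound the second by $|\lambda_k|\cdot\|(I - x^{k+1}(x^{k+1})^T)x^k\|$. For the last factor, note $(I - x^{k+1}(x^{k+1})^T)x^{k+1} = 0$, so $\|(I-x^{k+1}(x^{k+1})^T)x^k\| = \|(I-x^{k+1}(x^{k+1})^T)(x^k - x^{k+1})\|\le\|x^k - x^{k+1}\|\le\|d^k\|$, and $\{\lambda_k\}$ is bounded by Theorem~\ref{sta}(iii), say $|\lambda_k|\le C_2$. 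Setting $C := C_1 + C_2$ yields ${\rm dist}(0,\partial F(x^{k+1}))\le(C + \frac1t)\|d^k\|$, as claimed.

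The main obstacle is the uniform-Lipschitz argument for $\phi_+'$: one must confirm that all iterates $x^k$ (hence all arguments fed into $\Phi_+'$) stay in a fixed compact set — which holds since $x^k\in\mathcal M$ for all $k$ — so that the merely \emph{local} Lipschitz hypothesis on $\phi_+'$ upgrades to a single constant $L_\phi$ valid along the whole sequence. Everything else is the routine bookkeeping of replacing $u^k$ with $u^{k+1}$, exploiting scale-invariance of $\partial\|\cdot\|_1$, and the nonexpansiveness of the projection $I - x^{k+1}(x^{k+1})^T$.
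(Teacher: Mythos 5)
Your proposal is correct and follows essentially the same route as the paper's proof: both take the candidate subgradient $Y(u^{k+1}\circ\widetilde\xi^{k+1})$ obtained from the optimality condition \eqref{opt} via the scale-invariance of $\partial\|\cdot\|_1$, bound the $u^k$-versus-$u^{k+1}$ mismatch by the uniform Lipschitz constant of $\phi_+'$ on a compact set together with \eqref{retrprop_000}, and absorb the normal-cone contribution using the boundedness of $\{\lambda_k\}$ from Theorem~\ref{sta}(iii). The only cosmetic difference is that you pass through the projection formula \eqref{gjr0} while the paper directly estimates $\|Y(u^{k+1}\circ\widetilde\xi^{k+1})+\lambda_k x^{k+1}\|$; these are equivalent.
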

\begin{proof}
Since $\{x^k\}$ is bounded, we may assume that $\phi_+'$ is Lipschitz continuous with modulus $C_0 > 0$ on the convex closure of $\{|x^k_i|:\; k\ge 0,\, i = 1,\cdots,n\}$.
Notice that
	\begin{align*}
			{\rm dist}(0,\partial F(x^{k+1}))  &\overset{\rm (a)}\le\|Y(u^{k+1}\circ \widetilde \xi^{k+1}) + \lambda_k x^{k+1}\| \\
&= \|Y(u^k\circ \widetilde \xi^{k+1}) + \lambda_k x^{k} + \lambda_k(x^{k+1}  -x^k) + Y([u^{k+1} - u^k]\circ \widetilde \xi^{k+1})\|\\
&\overset{\rm (b)}= \left\|-\frac1td^k + \lambda_k(x^{k+1}  -x^k)+ Y([u^{k+1} - u^k]\circ \widetilde \xi^{k+1})\right\|\\
& \le \frac1t\|d^k\| + |\lambda_k|\|x^{k+1}-x^k\| + \|Y\|_2\|\widetilde \xi^{k+1}\|_\infty\|u^{k+1} - u^k\|\\
& \overset{{\rm (c)}}\le \frac1t\|d^k\| + |\lambda_k|\|x^{k+1}-x^k\| + \sqrt{p}C_0\|Y\|_2\|\|x^{k+1} - x^k\|\\
&\overset{{\rm (d)}}\le \left(\frac1t+ |\lambda_k| + \sqrt{p}C_0\|Y\|_2\right)\|d^k\|\le \left(\frac1t+ C\right)\|d^k\|,
		\end{align*}
where $ \widetilde\xi^{k+1}$ and $\lambda_k$ are given in \eqref{opt} and (a) holds thanks to Lemma~\ref{prop:partialF}, Lemma~\ref{deltaM}(i) and
	\[
	  \widetilde\xi^{k+1}\in\partial \|Y^T(x^k+d^k)\|_1 = \partial\left \|\frac{Y^T(x^k+d^k)}{\|x^k+d^k\|}\right\|_1 = \partial \|Y^Tx^{k+1}\|_1,
	\]
	(b) holds thanks to \eqref{opt}, (c) holds since $\|\widetilde \xi^{k+1}\|_\infty\le 1$ and $\phi_+'$ is Lipschitz continuous with modulus $C_0$ on the convex closure of $\{|x^k_i|:\; k\ge 0,\, i = 1,\cdots,n\}$, we use the definition of $ x^{k+1} $ and \eqref{retrprop_000} with $x=x^k$ and $ d= d^k $ in (d), and the last inequality holds for some constant $C > 0$ because $\{\lambda_k\}$ is bounded thanks to Theorem~\ref{sta}(iii).
\end{proof}

Based on Theorem~\ref{sta}(i) and the above lemma, it is now standard to prove the global convergence of $\{x^k\}$ following the framework in \cite{Attouch13,BolSabTeb14}, by assuming that the $F$ in \eqref{F} (which is continuous on its domain) satisfies the KL property. Specifically, the sequence $\{x^k\}$ is bounded as $\|x^k\|=1$ for all $k$. Using Theorem~\ref{sta}(i) and Lemma~\ref{distbound}, one can deduce the global convergence of $ \{x^k\} $ by following the proof in \cite[Theorem~2.9]{Attouch13} and \cite[Theorem~1]{BolSabTeb14}; particularly, one can show that $\{x^k\}$ is a Cauchy sequence by first proving that $\{\|d^k\|\}$ is summable and then invoking \eqref{retrprop_000} and the definition of $x^{k+1}$. The proof is standard and we omit it here.
\begin{theorem}\label{thm1}
	Suppose that $ F $ in \eqref{F} is a KL function with $\phi_+':\R_+\to \R_+$ being locally Lipschitz. Then the sequence $ \{x^k\} $ generated by $\Alg$ converges to a stationary point of \eqref{P0}.
\end{theorem}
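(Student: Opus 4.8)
The plan is to follow the well-trodden abstract convergence recipe for descent methods satisfying a KL inequality, as developed in \cite{Attouch13,BolSabTeb14}, using the three ingredients already assembled in the excerpt. First, I would record the three structural facts: (a) the sufficient decrease estimate $F(x^{k+1})\le F(x^k) - c\|d^k\|^2$ with $c = \frac1t - \frac{L_f}{2} > 0$ from Theorem~\ref{sta}(i); (b) the relative error bound ${\rm dist}(0,\partial F(x^{k+1}))\le (C+\frac1t)\|d^k\|$ from Lemma~\ref{distbound}; and (c) the continuity of $F$ along the sequence, which holds because $f$ is (Lipschitz) continuous by Lemma~\ref{flip} and $x^k\in\mathcal{M}={\rm dom}\,F$ for all $k$, together with the fact that $\{F(x^k)\}$ is monotone and bounded below, so it converges to some value $F^*$. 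Note that the iterate displacement $\|x^{k+1}-x^k\|$ is controlled by $\|d^k\|$ via \eqref{retrprop_000}, which is exactly what lets one pass between "$d$-steps" and "$x$-steps."

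Next I would set up the KL machinery at an accumulation point. Since $\|x^k\|=1$, the sequence is bounded, so it has an accumulation point $\widehat x$, which by Theorem~\ref{sta}(iv) is a stationary point, hence $\widehat x\in{\rm dom}\,\partial F$. If along some subsequence $F(x^k)=F^*=F(\widehat x)$ for large $k$, then by the decrease inequality $d^k=0$ eventually and \eqref{xkplus1} gives $x^{k+1}=x^k$, so the sequence is eventually constant and the claim is immediate; otherwise $F(x^k)>F^*$ for all $k$. Apply the KL property of $F$ at $\widehat x$: there are $a>0$, a neighborhood $V$ and a concave $\varphi$ with $\varphi'(F(x)-F^*)\,{\rm dist}(0,\partial F(x))\ge 1$ on the relevant region. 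Combining the concavity inequality $\varphi(F(x^k)-F^*) - \varphi(F(x^{k+1})-F^*)\ge \varphi'(F(x^k)-F^*)\,(F(x^k)-F(x^{k+1}))$ with (a), (b) and the KL inequality yields, in the standard way, a bound of the form
\[
  2\|d^k\| \le \|d^{k-1}\| + \frac{C+1/t}{c}\Big(\varphi(F(x^k)-F^*) - \varphi(F(x^{k+1})-F^*)\Big)
\]
(using $2\sqrt{\alpha\beta}\le \alpha+\beta$). Telescoping over a tail where the iterates stay in $V$ and the KL region (which one arranges by the usual argument that $\varphi(F(x^k)-F^*)\to 0$ and $\|d^k\|\to 0$, so some iterate enters a small ball around $\widehat x$ and stays), this shows $\sum_k\|d^k\| < \infty$.

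Finally, summability of $\{\|d^k\|\}$ together with \eqref{retrprop_000} and \eqref{xkplus1} gives $\sum_k\|x^{k+1}-x^k\|<\infty$, so $\{x^k\}$ is Cauchy and converges to a point $x^*$; since $x^*$ is an accumulation point, Theorem~\ref{sta}(iv) identifies it as a stationary point of \eqref{P0}. The main obstacle — and really the only delicate point — is the bookkeeping that keeps the iterates inside the KL neighborhood $V$ for all sufficiently large $k$: one must choose the starting index of the tail so that the accumulated "budget" $\|d^{k_0-1}\| + \frac{C+1/t}{c}\varphi(F(x^{k_0})-F^*)$ plus $\|x^{k_0}-\widehat x\|$ is smaller than the radius of $V$, and then verify by induction that no iterate escapes. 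This is entirely standard (it is precisely the argument in \cite[Theorem~2.9]{Attouch13} and \cite[Theorem~1]{BolSabTeb14}), so rather than reproducing it I would simply cite those references, as the paragraph preceding the theorem statement already does.
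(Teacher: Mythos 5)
Your proposal is correct and follows exactly the route the paper itself takes: the paper does not write out a proof of Theorem~\ref{thm1} but instead cites the same three ingredients (Theorem~\ref{sta}(i), Lemma~\ref{distbound}, continuity of $F$ on its domain) and defers to the standard KL framework of \cite[Theorem~2.9]{Attouch13} and \cite[Theorem~1]{BolSabTeb14}, concluding via summability of $\{\|d^k\|\}$ and \eqref{retrprop_000} that $\{x^k\}$ is Cauchy. Your filled-in details, including the index bookkeeping in the inequality $2\|d^k\|\le\|d^{k-1}\|+\frac{C+1/t}{c}\bigl(\varphi(F(x^k)-F^*)-\varphi(F(x^{k+1})-F^*)\bigr)$ and the neighborhood-confinement induction, are accurate.
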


\subsection{Global convergence for general $ \phi $}
In this subsection, we discuss the global convergence of $\Alg$ \emph{without} assuming $ \phi_+' $ to be locally Lipschitz, under suitable assumptions concerning KL property. Specifically, our analysis is based on the following potential function and its KL property:
\begin{equation}\label{Ftilde}
\widetilde{F}(x,u) = \Psi^*(-u) + \langle u,  |Y^T x | \rangle + \delta_\mathcal M(x),
\end{equation}
where $ \Psi(y):= \sum\limits_{i=1}^p\psi(y_i) $ and $ \Psi^*(w) := \sum\limits_{i=1}^p \psi^*(w_i) $ with $\psi$ being the convex function defined as
\begin{equation}\label{psi}
\psi(s) = \begin{cases}
-\phi(s) & \text{if}\; s \geq 0,\\
s^2- \ell s & \text{otherwise}
\end{cases}
\end{equation}
and $\psi^*$ being its conjugate,
and $\phi$, $Y$ and ${\cal M}$ are defined in \eqref{P0}; an analogous potential function was previously proposed and used in \cite[Section~5.2]{YePong20} for analyzing global convergence of the algorithm therein. Note that for each $x$, it holds that
\begin{equation}\label{infwF}
\inf_u \widetilde F(x,u) = F(x)
\end{equation}
thanks to \cite[Theorem~12.2]{Ro70} and the definitions of $ \widetilde F $ and Fenchel conjugate, where $F$ is defined in \eqref{F}.

We first derive a formula for the subdifferential of $\widetilde F$ in \eqref{Ftilde}. We will start with an auxiliary lemma, which concerns the subdifferential of the following function $H$ extracted from the second summand on the right-hand side of \eqref{Ftilde}:
\begin{align}\label{defqH}
    H(x,u) :=  \langle u,  |Y^T x| \rangle = \langle u,q(x)\rangle,\ \ \ \ \ \ {\rm where }\ q(x) :=   |Y^T x |.
\end{align}
\begin{lemma}\label{subdiffH}
  Let $H$ be defined in \eqref{defqH}. For any $u\in \R^p_+$ and $x\in \R^n$, it holds that
  \begin{equation*}
  \partial H(x,u) = \begin{bmatrix}
    u\circ \partial q(x)\\ q(x)
  \end{bmatrix} = \widehat\partial H(x,u),
  \end{equation*}
  where we define $\partial q(x) := \partial q_1(x)\times \cdots \times\partial q_p(x)$ for notational simplicity.
\end{lemma}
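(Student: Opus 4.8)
The plan is to reduce the computation to a two-dimensional model. Write $H(x,u)=\sum_{i=1}^{p}h_i(x,u)$ with $h_i(x,u):=u_i q_i(x)=u_i\,|\langle y_i,x\rangle|$, where $y_i$ denotes the $i$th column of $Y$; if $y_i=0$ then $h_i\equiv 0$ and that index is trivial, so assume $y_i\ne 0$. Each $h_i$ factors as $h_i=g_i\circ\pi_i$, where $\pi_i:\R^n\times\R^p\to\R^2$, $\pi_i(x,u):=(\langle y_i,x\rangle,u_i)$, is linear and $g_i:\R^2\to\R$ is the model function $g_i(s,t):=t|s|$. The target formula will then come from applying the linear chain rule \cite[Theorem~10.6]{RoWe97} to each $h_i$ and the sum rule \cite[Corollary~10.9]{RoWe97} to $H$, once $g_i$ is understood.

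The core step is to show that \emph{for every $(\bar s,\bar t)\in\R\times\R_+$, $g_i$ is regular at $(\bar s,\bar t)$ with $\partial g_i(\bar s,\bar t)=\widehat\partial g_i(\bar s,\bar t)=\bar t\,\partial|\cdot|(\bar s)\times\{|\bar s|\}$}, where $\partial|\cdot|(\bar s)=\{\operatorname{sgn}\bar s\}$ if $\bar s\ne 0$ and $\partial|\cdot|(0)=[-1,1]$. I would split this into three cases. If $\bar t>0$, then on a neighbourhood of $(\bar s,\bar t)$ one has $g_i(s,t)=\max\{ts,-ts\}$, a pointwise maximum of two smooth functions, which is regular and whose subdifferential is given by the standard max-rule (the convex hull of the active gradients). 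If $\bar t=0$ and $\bar s\ne 0$, then $|\cdot|$ is smooth near $\bar s$, hence $g_i$ is $C^1$ near $(\bar s,0)$ with $\nabla g_i(\bar s,0)=(0,|\bar s|)$. The only remaining (and only genuinely singular) case is $\bar s=\bar t=0$: here one checks directly from the definition, using $|t\,|s||=|t||s|\le\tfrac12\|(s,t)\|^2$, that $\widehat\partial g_i(0,0)=\{(0,0)\}$; for the limiting subdifferential one then uses the descriptions of $\widehat\partial g_i$ at nearby points obtained above, together with the observation that $\widehat\partial g_i(0,t')=\emptyset$ when $t'<0$ (there $s\mapsto g_i(s,t')$ is concave with a strict maximum at $s=0$), to conclude that any limit of regular subgradients along points converging to the origin is $(0,0)$; hence $\partial g_i(0,0)=\{(0,0)\}=\widehat\partial g_i(0,0)$.

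Granting the core step, since $g_i$ is locally Lipschitz we have $\partial^\infty g_i\equiv\{0\}$, so the qualification in \cite[Theorem~10.6]{RoWe97} is automatic for $h_i=g_i\circ\pi_i$; that theorem gives that $h_i$ is regular at $(\bar x,\bar u)$ with $\partial h_i(\bar x,\bar u)=\pi_i^{*}\big(\partial g_i(\langle y_i,\bar x\rangle,\bar u_i)\big)$. Since $\pi_i^{*}(a,b)=(a\,y_i,\,b\,e_i)$ (with $e_i$ the $i$th coordinate vector of $\R^p$), $\partial q_i(\bar x)=y_i\,\partial|\cdot|(\langle y_i,\bar x\rangle)$ and $q_i(\bar x)=|\langle y_i,\bar x\rangle|$, this means the $x$-component of $\partial h_i(\bar x,\bar u)$ is $\bar u_i\,\partial q_i(\bar x)$ and its $u$-component is $q_i(\bar x)\,e_i$; here $\bar u_i\ge 0$ (i.e.\ $\bar u\in\R^p_+$) is exactly what lets the core step apply at $(\langle y_i,\bar x\rangle,\bar u_i)$. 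Finally each $\widehat\partial h_i(\bar x,\bar u)$ is bounded (as $\partial|\cdot|$ is bounded), so by regularity $\partial^\infty h_i(\bar x,\bar u)=\{0\}$, and the sum rule \cite[Corollary~10.9]{RoWe97} applies with its qualification trivially met: $H=\sum_i h_i$ is regular at $(\bar x,\bar u)$ and $\partial H(\bar x,\bar u)=\sum_i\partial h_i(\bar x,\bar u)$. Summing the $x$-components (weighted by the $\bar u_i$) gives exactly $\bar u\circ\partial q(\bar x)$ and summing the $u$-components gives $q(\bar x)$, which is the asserted formula; regularity of $H$ at $(\bar x,\bar u)$ then yields $\widehat\partial H(\bar x,\bar u)=\partial H(\bar x,\bar u)$.

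I expect the main obstacle to be the core step at the origin: $g_i(s,t)=t|s|$ is the unique point that is simultaneously nonsmooth and nonconvex (convex in $s$ for $t>0$, concave in $s$ for $t<0$), and the crux is verifying that this asymmetry produces no spurious limiting subgradients — which works precisely because the ``bad'' halfplane $t<0$ carries an empty regular subdifferential along $s=0$, so sequences approaching the origin from that region contribute nothing. Everything else is a routine application of the linear chain rule and the sum rule of variational analysis, both of which are already used earlier in the paper.
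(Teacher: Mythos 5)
Your proof is correct, but it takes a genuinely different route from the paper's. The paper handles the bilinear coupling in one stroke via a partial linearization: it introduces $H_{(\bar x,\bar u)}(x,u)=\langle \bar u,q(x)\rangle+\langle u,q(\bar x)\rangle$, notes that $H-H_{(\bar x,\bar u)}=\langle u-\bar u,\,q(x)-q(\bar x)\rangle$ is quadratically small near $(\bar x,\bar u)$, and invokes \cite[Corollary~1.111(i)]{Morduk06} to conclude $\partial H(\bar x,\bar u)=\partial H_{(\bar x,\bar u)}(\bar x,\bar u)$; since $H_{(\bar x,\bar u)}$ is separable and convex (here is where $\bar u\in\R^p_+$ enters), its subdifferential is read off from convex calculus, and the inclusion $\widehat\partial H_{(\bar x,\bar u)}(\bar x,\bar u)\subseteq\widehat\partial H(\bar x,\bar u)$ is verified directly by absorbing the cross term $\|Y\|_2\|u-\bar u\|\|x-\bar x\|$ into an $o(\|(x,u)-(\bar x,\bar u)\|)$ bound. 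You instead reduce to the scalar model $g(s,t)=t|s|$ and do an explicit case analysis — including the only genuinely singular point $(0,0)$, where you correctly verify $\widehat\partial g(0,0)=\partial g(0,0)=\{(0,0)\}$ and, crucially, that $\widehat\partial g(0,t)=\emptyset$ for $t<0$ so the concave half-plane contributes no spurious limiting subgradients — before reassembling via the chain rule \cite[Theorem~10.6]{RoWe97} and the sum rule \cite[Corollary~10.9]{RoWe97}, whose qualifications are automatic by local Lipschitz continuity. Both arguments correctly localize where $u\in\R^p_+$ is needed (convexity of $\langle\bar u,q(\cdot)\rangle$ in the paper; regularity of $g$ at $(\bar s,\bar t)$ with $\bar t\ge0$ in yours). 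The paper's route is shorter and avoids case analysis at the cost of leaning on the Mordukhovich linearization lemma; yours is more elementary and self-contained at the level of the model function, and makes explicit exactly why the nonconvexity of $t|s|$ is harmless here, at the cost of more bookkeeping with the calculus rules.
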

\begin{proof}
  For any $(\bar x,\bar u)\in \R^n\times \R^p_+$, define the auxiliary function
  \begin{align*}
    H_{(\bar x,\bar u)}(x,u):= \langle \bar u,q(x)\rangle + \langle u,q(\bar x)\rangle.
  \end{align*}
  Then we have in view of \cite[Corollary~1.111(i)]{Morduk06} that
  \begin{align}\label{hahaha}
  \partial H(\bar x,\bar u) = \partial H_{(\bar x,\bar u)}(\bar x,\bar u) =
  \begin{bmatrix}
    \bar u\circ \partial q(\bar x)\\ q(\bar x)
  \end{bmatrix},
  \end{align}
  where the last equality follows from \cite[Proposition~10.5]{RoWe97}, the fact that $\bar u\in \R^p_+$, the convexity and finite-valuedness of $q_i$ for each $i$, and \cite[Theorem~23.8]{Ro70}. In addition, notice that $ H_{(\bar x,\bar u)}$ is convex and hence $\partial H_{(\bar x,\bar u)}(\bar x,\bar u) = \widehat \partial H_{(\bar x,\bar u)}(\bar x,\bar u)$ in view of \cite[Proposition~8.12]{RoWe97}. Thus, the desired conclusion will follow from this last observation and \eqref{hahaha} once we can show $\widehat \partial H_{(\bar x,\bar u)}(\bar x,\bar u)\subseteq \widehat \partial H(\bar x,\bar u)$, because we always have $\widehat \partial H(\bar x,\bar u)\subseteq \partial H(\bar x,\bar u)$ by the definitions of subdifferentials.

  It now remains to show $\widehat \partial H_{(\bar x,\bar u)}(\bar x,\bar u)\subseteq \widehat \partial H(\bar x,\bar u)$. To this end, pick any $(\xi,\eta)\in \widehat \partial H_{(\bar x,\bar u)}(\bar x,\bar u)$. Then using the definition of regular subdifferential, we see that for all $(x,u)$ close to $(\bar x,\bar u)$, we have
  \begin{align*}
    &o(\|(x,u) - (\bar x,\bar u)\|) \le H_{(\bar x,\bar u)}(x,u) - H_{(\bar x,\bar u)}(\bar x,\bar u)
    -\langle (\xi,\eta),(x,u) - (\bar x,\bar u)\rangle\\
    &\ \ \ \ \ \ \ \ \ \ \ \  = \langle \bar u,q(x)\rangle + \langle u,q(\bar x)\rangle - 2\langle \bar u,q(\bar x)\rangle-\langle (\xi,\eta),(x,u) - (\bar x,\bar u)\rangle\\
    &\ \ \ \  \ \ \ \ \ \ \ \  = \langle u,q(x)\rangle -  \langle \bar u,q(\bar x)\rangle + \langle \bar u - u, q(x) - q(\bar x)\rangle-\langle (\xi,\eta),(x,u) - (\bar x,\bar u)\rangle\\
    &\ \ \ \ \ \ \ \ \ \ \ \  \le \langle u,q(x)\rangle -  \langle \bar u,q(\bar x)\rangle -\langle (\xi,\eta),(x,u) - (\bar x,\bar u)\rangle
    + \|Y\|_2\|u - \bar u\|\|x - \bar x\|\\
    &\ \ \ \ \ \ \ \ \ \ \ \  \le \langle u,q(x)\rangle -  \langle \bar u,q(\bar x)\rangle -\langle (\xi,\eta),(x,u) - (\bar x,\bar u)\rangle
    + \frac{\|Y\|_2}2(\|u - \bar u\|^2 + \|x - \bar x\|^2).
  \end{align*}
  Consequently, $(\xi,\eta)\in \widehat \partial H(\bar x,\bar u)$.
\end{proof}

Now we are ready to present a formula for the subdifferential of $\widetilde F$ in \eqref{Ftilde} in the next proposition.
\begin{proposition}\label{subdifftildeF}
  Let $\widetilde F$ be given in \eqref{Ftilde}. Then for any $(x, u)\in {\rm dom}\,\widetilde F$, it holds that $u\in \R^p_+$ and
  \[
  \partial \widetilde F(x,u) = \begin{bmatrix}
    Y(u\circ \partial \|Y^Tx\|_1) + \partial \delta_{\cal M}(x)\\
    |Y^Tx| - \partial \Psi^*(-u)
  \end{bmatrix}.
  \]
\end{proposition}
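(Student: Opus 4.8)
The plan is to differentiate $\widetilde F$ by a sum rule after splitting it into its three natural pieces,
\[
\widetilde F(x,u) = g_1(x,u) + H(x,u) + g_2(x,u),\qquad g_1(x,u):=\Psi^*(-u),\quad g_2(x,u):=\delta_{\mathcal M}(x),
\]
with $H$ as in \eqref{defqH}. First I would dispose of the assertion $u\in\R^p_+$. Since $\phi\ge 0$, for any $w>0$ we have $\psi^*(w) = \sup_s(ws-\psi(s))\ge \sup_{s\ge 0}(ws+\phi(s))\ge \sup_{s\ge 0}ws = +\infty$, so ${\rm dom}\,\psi^*\subseteq(-\infty,0]$. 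Hence $(x,u)\in{\rm dom}\,\widetilde F$ forces $-u_i\in{\rm dom}\,\psi^*$, i.e.\ $u_i\ge 0$, for every $i$ (and of course $x\in\mathcal M$). I would also record that $\widetilde F$ is proper closed, that $g_1$ is proper closed convex (being $\Psi^*$ composed with a linear map), and that $H$ is locally Lipschitz.

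Next I would set up the sum rule. Each summand is regular at $(x,u)$: $g_1$ because it is convex, $H$ by Lemma~\ref{subdiffH}, and $g_2$ by Lemma~\ref{deltaM}(i) lifted to $\R^n\times\R^p$ (the sphere is regular as a set). The constraint qualification is trivial here because of block separability: $H$ is locally Lipschitz, so $\partial^\infty H(x,u)=\{0\}$; $g_1$ does not depend on $x$, so $\partial^\infty g_1(x,u)\subseteq\{0\}\times\R^p$; and $g_2$ does not depend on $u$, so $\partial^\infty g_2(x,u)\subseteq\R^n\times\{0\}$. Therefore any $v_1+v_2+v_3=0$ with $v_i$ in the respective horizon subdifferentials must have $v_1=v_2=v_3=0$, and \cite[Corollary~10.9]{RoWe97} (applied twice, say first to $g_1+H$ and then to $(g_1+H)+g_2$) yields
\[
\partial\widetilde F(x,u) = \partial g_1(x,u) + \partial H(x,u) + \partial g_2(x,u),
\]
with $\widetilde F$ regular at $(x,u)$ as a byproduct.

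Finally I would evaluate the three terms. By the convex chain rule for the linear map $(x,u)\mapsto -u$ (e.g.\ \cite[Theorem~23.9]{Ro70}), $\partial g_1(x,u)=\{0\}\times(-\partial\Psi^*(-u))$; and directly $\partial g_2(x,u)=\partial\delta_{\mathcal M}(x)\times\{0\}$. For $\partial H$, Lemma~\ref{subdiffH} gives an $x$-block equal to $u\circ\partial q(x)$ and a $u$-block equal to $q(x)$, and $q(x)=|Y^Tx|$ by \eqref{defqH}. Arguing exactly as in the proof of Lemma~\ref{flip} (write $q_i(x)=|[Y^Tx]_i|$, apply the chain rule \cite[Theorem~10.6]{RoWe97} together with \cite[Theorem~23.8]{Ro70}, and use the separability of $\|\cdot\|_1$) identifies $u\circ\partial q(x)=Y(u\circ\partial\|Y^Tx\|_1)$. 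Adding the blocks produces the $x$-block $Y(u\circ\partial\|Y^Tx\|_1) + \partial\delta_{\mathcal M}(x)$ and the $u$-block $|Y^Tx| - \partial\Psi^*(-u)$, as claimed.

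The argument is almost entirely bookkeeping; the only steps that require a little care are pinning down ${\rm dom}\,\psi^*$ so as to get $u\in\R^p_+$, and keeping the product structure straight so that the constraint qualification for the sum rule is visibly satisfied (it is the zero vector that separates the $x$- and $u$-directions of the horizon subdifferentials). I do not anticipate any genuine analytic obstacle beyond this.
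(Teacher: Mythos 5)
Your proof is correct and follows the same overall route as the paper: the same three-way splitting of $\widetilde F$ into $\Psi^*(-u)$, $H$, and $\delta_{\mathcal M}(x)$, the same reliance on Lemma~\ref{subdiffH} for $\partial H$, and the sum rule \cite[Corollary~10.9]{RoWe97} combined with \cite[Proposition~10.5]{RoWe97}. Two sub-steps differ in execution. First, to get $u\in\R^p_+$ you compute ${\rm dom}\,\psi^*\subseteq(-\infty,0]$ directly from $\psi^*(w)\ge \sup_{s\ge 0}(ws+\phi(s))=+\infty$ for $w>0$; the paper instead argues $-\bar u\in{\rm dom}\,\Psi^*\subseteq\overline{\bigcup_{y}\partial\Psi(y)}\subseteq\R^p_-$ via the Br{\o}ndsted--Rockafellar theorem together with \cite[Lemma~2.2(i)]{YuPong17}. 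Your computation is more elementary and self-contained. Second, you obtain the subdifferential formula in one shot from the equality case of Corollary~10.9, after verifying regularity of all three summands and the horizon-subdifferential qualification condition (which, as you note, is automatic from the block separability of the horizon subdifferentials); the paper instead proves the two inclusions separately --- the inclusion $\partial\widetilde F\subseteq \partial H + [\cdots]$ from the inclusion form of the sum rule, and the converse via the always-valid containment $\widehat\partial H + [\cdots]\subseteq\widehat\partial\widetilde F\subseteq\partial\widetilde F$ together with the regularity of each piece. The two mechanisms are equivalent here; yours is marginally more compact, while the paper's avoids having to invoke the qualification condition explicitly.
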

\begin{proof}
  For any $(\bar x,\bar u)\in {\rm dom}\,\widetilde F$, we have $-\bar u \in {\rm dom}\,\Psi^* \subseteq \overline{\bigcup_{y\in \R^p}\partial \Psi(y)} \subseteq \R^p_-$, where the first set inclusion follows from the Br{\o}ndsted-Rockafellar Theorem (see, e.g., \cite[Theorem~3.1.2]{Za02}), and the second set inclusion follows from the definition of $\psi$ in \eqref{psi} and \cite[Lemma~2.2(i)]{YuPong17}. Thus, it holds that $\bar u\in \R^p_+$.

  Next, notice that $\widetilde F(x,u) = \Psi^*(-u) +H(x,u) + \delta_\mathcal M(x)$, where $H$ is defined in \eqref{defqH}. Since $H$ is clearly locally Lipschitz, we have from \cite[Corollary~10.9]{RoWe97} and \cite[Proposition~10.5]{RoWe97} that
 \begin{align}\label{eq1}
   \partial \widetilde F(\bar x,\bar u) \subseteq \partial H(\bar x,\bar u) + \begin{bmatrix}
     \partial \delta_{\cal M}(\bar x)\\
     -\partial \Psi^*(-\bar u)
   \end{bmatrix} = \begin{bmatrix}
    Y(\bar u\circ \partial \|Y^T\bar x\|_1) + \partial \delta_{\cal M}(\bar x)\\
    |Y^T\bar x| - \partial \Psi^*(-\bar u)
  \end{bmatrix},
 \end{align}
 where the equality follows from Lemma~\ref{subdiffH} and the fact that $\bar u \in \R^p_+$. In addition, we have
 \begin{align}\label{eq2}
   \begin{bmatrix}
    Y(\bar u\circ \partial \|Y^T\bar x\|_1) + \partial \delta_{\cal M}(\bar x)\\
    |Y^T\bar x| - \partial \Psi^*(-\bar u)
  \end{bmatrix} &= \partial H(\bar x,\bar u) + \begin{bmatrix}
     \partial \delta_{\cal M}(\bar x)\\
     -\partial \Psi^*(-\bar u)
   \end{bmatrix} \notag\\
   &= \widehat \partial H(\bar x,\bar u) + \begin{bmatrix}
     \widehat \partial \delta_{\cal M}(\bar x)\\
     -\widehat \partial \Psi^*(-\bar u)
   \end{bmatrix} \subseteq \widehat \partial \widetilde F(\bar x,\bar u),
 \end{align}
 where the second equality follows from Lemma~\ref{subdiffH}, the fact that $\bar u \in \R^p_+$, Lemma~\ref{deltaM}(i) and \cite[Proposition~8.12]{RoWe97}, and the inclusion is due to \cite[Corollary~10.9]{RoWe97} and \cite[Proposition~10.5]{RoWe97}. Since we always have $\widehat \partial \widetilde F(\bar x,\bar u)\subseteq \partial \widetilde F(\bar x,\bar u)$ by the definitions of subdifferentials, the desired result follows upon combining \eqref{eq1} and \eqref{eq2}.
\end{proof}

We now start our convergence analysis for the global convergence of the sequence generated by $\Alg$. We first prove a bound on the least norm element of $\partial \widetilde F$ along a \emph{suitable sequence} generated by $\Alg$.

\begin{lemma}\label{boundedrFtilde}
	Let $\widetilde F$ be defined in \eqref{Ftilde} and $\{(x^k,u^k, d^k)\}$ be generated by $\Alg$. Then $(x^{k+1},u^k)\in {\rm dom}\, \widetilde F$ for all $k\ge 0$, and there exists $C>0$ such that
	\begin{equation*}
	{\rm dist}(0,\partial \widetilde F(x^{k+1}, u^{k}))\le \left(C + \frac1t\right)\|d^k\|\ \ \ \ \ \ \ \forall k\ge 0.
	\end{equation*}
\end{lemma}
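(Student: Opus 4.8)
The plan is to exhibit, for each $k\ge 0$, an explicit element $v^{k+1}\in\partial\widetilde F(x^{k+1},u^k)$ with $\|v^{k+1}\|\le(C+\tfrac1t)\|d^k\|$, using the subdifferential formula of Proposition~\ref{subdifftildeF} together with the optimality relation \eqref{opt} from Theorem~\ref{sta}(iii). The computation runs parallel to the proof of Lemma~\ref{distbound}, but is in fact simpler: since $\widetilde F$ is evaluated at $(x^{k+1},u^k)$ with the \emph{same} weight $u^k$ that appears in the subproblem \eqref{subp}, the correction term $u^{k+1}-u^k$ that forced the local Lipschitz hypothesis on $\phi'_+$ in Lemma~\ref{distbound} never arises --- which is exactly why no regularity of $\phi'_+$ is needed here.

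First I would record the relevant convex-analytic facts about $\psi$ in \eqref{psi}: it is proper, closed, convex and finite-valued (its one-sided derivatives match at the junction $s=0$, both equal to $-\ell=-\phi'_+(0)$), and for every $i$ one has $-u_i^k=-\phi'_+(|[Y^Tx^k]_i|)\in\partial\psi(|[Y^Tx^k]_i|)$. Hence $-u^k\in\partial\Psi(|Y^Tx^k|)$, which gives two things: the Fenchel--Young equality yields $\Psi^*(-u^k)=\langle-u^k,|Y^Tx^k|\rangle-\Psi(|Y^Tx^k|)<\infty$, so that $(x^{k+1},u^k)\in{\rm dom}\,\widetilde F$ (recall $x^{k+1}\in\mathcal M$); and the inverse relation $\partial\Psi^*=(\partial\Psi)^{-1}$ for the proper closed convex function $\Psi$ gives $|Y^Tx^k|\in\partial\Psi^*(-u^k)$.

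Next I would build $v^{k+1}$ block by block. For the $x$-block, recall (as in the proof of Lemma~\ref{distbound}) that $\widetilde\xi^{k+1}\in\partial\|Y^T(x^k+d^k)\|_1=\partial\|Y^Tx^{k+1}\|_1$ by positive homogeneity of $\|\cdot\|_1$ and \eqref{xkplus1}, so $Y(u^k\circ\widetilde\xi^{k+1})\in Y(u^k\circ\partial\|Y^Tx^{k+1}\|_1)$; adding $\lambda_k x^{k+1}\in N_{\mathcal M}(x^{k+1})$ and using \eqref{opt} to substitute $Y(u^k\circ\widetilde\xi^{k+1})=-\tfrac1t d^k-\lambda_k x^k$, the $x$-block becomes $-\tfrac1t d^k+\lambda_k(x^{k+1}-x^k)$. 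For the $u$-block, take $|Y^Tx^{k+1}|-|Y^Tx^k|\in|Y^Tx^{k+1}|-\partial\Psi^*(-u^k)$ using the fact recorded above. By Proposition~\ref{subdifftildeF} the pair of these two blocks lies in $\partial\widetilde F(x^{k+1},u^k)$, and hence
\[
{\rm dist}(0,\partial\widetilde F(x^{k+1},u^k))\le\Big\|-\tfrac1t d^k+\lambda_k(x^{k+1}-x^k)\Big\|+\big\||Y^Tx^{k+1}|-|Y^Tx^k|\big\|.
\]
Bounding the right-hand side via the triangle inequality, $\|x^{k+1}-x^k\|\le\|d^k\|$ (from \eqref{retrprop_000} with $x=x^k$, $d=d^k$), the componentwise estimate $\big||[Y^Tx^{k+1}]_i|-|[Y^Tx^k]_i|\big|\le|[Y^T(x^{k+1}-x^k)]_i|$ and $\|Y^T\cdot\|\le\|Y\|_2\|\cdot\|$, one obtains the upper bound $(\tfrac1t+|\lambda_k|+\|Y\|_2)\|d^k\|$, and the boundedness of $\{\lambda_k\}$ from Theorem~\ref{sta}(iii) then supplies the uniform constant $C$.

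The only step that genuinely needs care is the $u$-block, that is, the membership $|Y^Tx^k|\in\partial\Psi^*(-u^k)$ (equivalently $-u^k\in\partial\Psi(|Y^Tx^k|)$): one has to verify that the piecewise definition of $\psi$ in \eqref{psi} really produces a proper closed convex function with $-u_i^k$ a subgradient at $|[Y^Tx^k]_i|$. This is precisely why the weight $u^k=\Phi'_+(|Y^Tx^k|)$ is built from a one-sided derivative and why $\psi$ extends $-\phi$ by the quadratic $s^2-\ell s$ below $0$ (so that $\psi$ is convex and $\Psi^*$ is finite at $-u^k$). Everything else is routine bookkeeping, identical in spirit to that in Lemma~\ref{distbound}.
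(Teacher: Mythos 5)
Your proposal is correct and follows essentially the same route as the paper's proof: both establish $|Y^Tx^k|\in\partial\Psi^*(-u^k)$ from the definition of $u^k$ and the Fenchel--Young equality (whence $(x^{k+1},u^k)\in{\rm dom}\,\widetilde F$), then exhibit the element of $\partial\widetilde F(x^{k+1},u^k)$ with $x$-block $Y(u^k\circ\widetilde\xi^{k+1})+\lambda_k x^{k+1}$ and $u$-block $|Y^Tx^{k+1}|-|Y^Tx^k|$ via Proposition~\ref{subdifftildeF}, and bound it by $(\tfrac1t+|\lambda_k|+\|Y\|_2)\|d^k\|$ using \eqref{opt}, the positive homogeneity of $\|\cdot\|_1$, and \eqref{retrprop_000}. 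Your extra care in verifying $-u^k\in\partial\Psi(|Y^Tx^k|)$ via the matching one-sided derivatives of $\psi$ at $0$ is a welcome elaboration of what the paper states more tersely, but it is not a different argument.
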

\begin{proof}
	For each $k \ge 0$, observe that $u^k = \Phi'_+(|Y^Tx^k|) = -\Psi'(|Y^Tx^k|)$. Thus, we see from \cite[Theorem~23.5]{Ro70} that
\begin{equation}\label{subdifferentialhaha}
|Y^Tx^k| \in \partial \Psi^*(-u^k),
\end{equation}
which in particular implies that $-u^k\in {\rm dom}\,\Psi^*$. Since $x^k\in {\cal M}$ for all $k\ge 1$ due to \eqref{xkplus1}, we conclude that $(x^{k+1},u^k)\in {\rm dom}\, \widetilde F$ for all $k\ge 0$.
Thus, we can invoke Proposition~\ref{subdifftildeF} to get for each $k \ge 0$ that
\begin{equation*}
\!\!\!\!\partial \widetilde F(x^{k+1}\!,u^{k}) \!=\! \begin{bmatrix}
    Y(u^{k}\circ \partial \|Y^Tx^{k+1}\|_1) \!+\! \partial \delta_{\cal M}(x^{k+1})\\
    |Y^Tx^{k+1}| - \partial \Psi^*(-u^{k})
  \end{bmatrix} \!\!\supseteq\!\! \begin{bmatrix}
    Y(u^{k}\circ \partial \|Y^Tx^{k+1}\|_1) \!+\! \partial \delta_{\cal M}(x^{k+1})\\
    |Y^Tx^{k+1}| - |Y^Tx^{k}|
  \end{bmatrix},\!\!\!\!
\end{equation*}
where the inclusion follows from \eqref{subdifferentialhaha}.

Then we have for every $ k\geq 0 $ that
	\begin{equation*}
	\begin{split}
	&{\rm dist}(0, \partial \widetilde F(x^{k+1}, u^{k}))\\
&\leq {\rm dist}(0, Y(u^{k}\circ \partial \|Y^Tx^{k+1}\|_1) + \partial \delta_{\cal M}(x^{k+1})) + \||Y^Tx^{k+1}| - |Y^Tx^k|\| \\
	&\overset{\rm (a)}\leq \|Y(u^{k}\circ \widetilde\xi^{k+1}) +\lambda_k x^{k+1}\| + \|Y^T\|_2\|x^{k+1} - x^k\| \\
	&= \|Y(u^k\circ \widetilde \xi^{k+1}) + \lambda_k x^k + \lambda_k(x^{k+1}-x^k)\| + \|Y^T\|_2\|x^{k+1} - x^k\|\\
	& \overset{\rm (b)}\le \frac 1t \|d^k\| +|\lambda_k|\cdot\|x^{k+1}-x^k\| + \|Y^T\|_2\|x^{k+1} - x^k\| \overset{\rm (c)}\leq  (\|Y\|_2+|\lambda_k|+\frac 1t)\|d^k\|,		
	\end{split}
	\end{equation*}
	where $ \widetilde\xi^{k+1}$ and $\lambda_k$ are given in \eqref{opt} and (a) holds thanks to Lemma~\ref{deltaM}(i) and the observation that
	\[
	  \widetilde\xi^{k+1}\in\partial \|Y^T(x^k+d^k)\|_1 = \partial\left \|\frac{Y^T(x^k+d^k)}{\|x^k+d^k\|}\right\|_1 = \partial \|Y^Tx^{k+1}\|_1,
	\]
	and (b) holds thanks to \eqref{opt}, and (c) uses the definition of $ x^{k+1} $ and \eqref{retrprop_000} with $x=x^k$ and $ d= d^k $. The desired conclusion follows upon noting that $\{\lambda_k\}$ is bounded, thanks to Theorem~\ref{sta}(iii).
\end{proof}

Next, we derive a descent property analogous to Theorem~\ref{sta}(i), but for $\widetilde F$ in \eqref{Ftilde} instead of the $F$ in \eqref{F}.
\begin{lemma}\label{boundedrFtilde2}
	Let $\widetilde F$ be defined in \eqref{Ftilde} and $\{(x^k,u^k, d^k)\}$ be generated by $\Alg$. Then for all $k\ge 1$, we have
\[
\widetilde F(x^{k+1},u^k) \le \widetilde F(x^{k},u^{k-1})- \left(\frac1t-\frac{L_f}{2}\right)\|d^k\|^2.
\]
\end{lemma}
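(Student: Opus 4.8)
The plan is to route the proof through the intermediate quantity $\widetilde F(x^k,u^k)$: I would establish the two estimates
\[
\widetilde F(x^{k+1},u^k)\le \widetilde F(x^k,u^k)-\tfrac1t\|d^k\|^2
\qquad\text{and}\qquad
\widetilde F(x^k,u^k)\le \widetilde F(x^k,u^{k-1}),
\]
and then chain them, using $L_f\ge 0$ to pass from $-\tfrac1t\|d^k\|^2$ to $-(\tfrac1t-\tfrac{L_f}{2})\|d^k\|^2$. Here the first estimate is a descent step in the $x$-variable with the weight held fixed at $u^k$, and the second is the minimality of $u^k$ in $\widetilde F(x^k,\cdot)$.

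The first thing I would record is the identity $\widetilde F(x^k,u^k)=F(x^k)$. This follows because, as already noted in the proof of Lemma~\ref{boundedrFtilde} (see \eqref{subdifferentialhaha}), $u^k=\Phi_+'(|Y^Tx^k|)=-\Psi'(|Y^Tx^k|)$, so that $|Y^Tx^k|\in\partial\Psi^*(-u^k)$ and the Fenchel--Young equality gives $\Psi^*(-u^k)+\langle u^k,|Y^Tx^k|\rangle=-\Psi(|Y^Tx^k|)$; since $|Y^Tx^k|\in\R^p_+$ and $\psi(s)=-\phi(s)$ for $s\ge 0$ by \eqref{psi}, the right-hand side equals $\sum_{i=1}^p\phi(|[Y^Tx^k]_i|)=f(x^k)$, and $\delta_{\mathcal M}(x^k)=0$ since $x^k\in\mathcal M$. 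In view of \eqref{infwF}, this identity says exactly that $\widetilde F(x^k,u^k)=\inf_u\widetilde F(x^k,u)$, which immediately gives the second estimate $\widetilde F(x^k,u^k)\le\widetilde F(x^k,u^{k-1})$ for $k\ge 1$.

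For the first estimate I would fix $k\ge 1$ and note that, since the $\Psi^*(-u^k)$ terms cancel and $\delta_{\mathcal M}$ vanishes at $x^{k+1}$ and $x^k$, one has $\widetilde F(x^{k+1},u^k)-\widetilde F(x^k,u^k)=\langle u^k,|Y^Tx^{k+1}|-|Y^Tx^k|\rangle$. The crux is then the bound $\langle u^k,|Y^Tx^{k+1}|\rangle\le\langle u^k,|Y^T(x^k+d^k)|\rangle$: by \eqref{xkplus1} and $\langle x^k,d^k\rangle=0$ we have $|Y^Tx^{k+1}|=|Y^T(x^k+d^k)|/\|x^k+d^k\|$ with $\|x^k+d^k\|=\sqrt{1+\|d^k\|^2}\ge 1$, while $u^k\in\R^p_+$ and $|Y^T(x^k+d^k)|\in\R^p_+$; hence dividing the nonnegative inner product by a number $\ge 1$ only decreases it. Substituting and invoking \eqref{sc_subp} from the proof of Theorem~\ref{sta} then yields
\[
\widetilde F(x^{k+1},u^k)-\widetilde F(x^k,u^k)\le \sum_{i=1}^p u_i^k\big(|[Y^T(x^k+d^k)]_i|-|[Y^Tx^k]_i|\big)\le -\tfrac1t\|d^k\|^2,
\]
which is the first estimate.

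I do not anticipate a real obstacle: this is essentially a bookkeeping argument. The only points needing care are tracking when $\delta_{\mathcal M}$ vanishes (forcing $k\ge 1$, which matches the statement) and identifying $\widetilde F(x^k,u^k)$ with $\min_u\widetilde F(x^k,u)$ via Fenchel--Young. It is worth remarking that the argument actually delivers the sharper $\widetilde F(x^{k+1},u^k)\le\widetilde F(x^k,u^{k-1})-\tfrac1t\|d^k\|^2$; no Lipschitz estimate on $f$ enters because the retraction \eqref{xkplus1} acts on $\langle u^k,|Y^T\cdot|\rangle$ only by scaling it down, so the stated bound with $\tfrac1t-\tfrac{L_f}{2}$ is a deliberate weakening for parallelism with Theorem~\ref{sta}(i).
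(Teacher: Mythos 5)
Your proof is correct, and it diverges from the paper's at the one step that actually matters: the treatment of the retraction. The paper writes $\widetilde F(x^{k+1},u^k)=\Psi^*(-u^k)+\langle u^k,|Y^T(x^k+d^k)|\rangle+\langle u^k,|Y^Tx^{k+1}|-|Y^T(x^k+d^k)|\rangle$ and controls the last term by Cauchy--Schwarz together with the second-order retraction estimate \eqref{retrprop} and the bound $\|u^k\|\le\sqrt{p}\,\ell$, which is precisely where the $\tfrac{L_f}{2}\|d^k\|^2$ term enters; the remaining steps (the strong-convexity inequality \eqref{sc_subp2}, the Fenchel--Young equality giving $\Psi^*(-u^k)+\langle u^k,|Y^Tx^k|\rangle=-\Psi(|Y^Tx^k|)$, and the Fenchel--Young inequality giving $-\Psi(|Y^Tx^k|)\le\widetilde F(x^k,u^{k-1})$) are the same as yours, just arranged in one chain rather than routed through $\widetilde F(x^k,u^k)=F(x^k)$. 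You instead exploit that the retraction \eqref{xkplus1} acts on $|Y^T\cdot|$ as division by $\|x^k+d^k\|=\sqrt{1+\|d^k\|^2}\ge 1$, so that $\langle u^k,|Y^Tx^{k+1}|\rangle\le\langle u^k,|Y^T(x^k+d^k)|\rangle$ outright because $u^k\in\R^p_+$; this eliminates the error term entirely and yields the sharper decrease $-\tfrac1t\|d^k\|^2$, from which the stated bound follows since $L_f\ge 0$. Your observation is sound and arguably cleaner for this specific $\widetilde F$; what the paper's argument buys in exchange is robustness — it only uses the generic retraction property \eqref{retrprop} and boundedness of $u^k$, so it would survive replacing the sphere retraction or the bilinear structure of $\langle u,|Y^Tx|\rangle$, and it keeps the constant $\tfrac1t-\tfrac{L_f}{2}$ aligned with Theorem~\ref{sta}(i), which is the form used downstream. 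One small point worth making explicit if you write this up: the identity $\widetilde F(x^k,u^k)=F(x^k)$ (hence $(x^{k+1},u^k)\in{\rm dom}\,\widetilde F$ and the minimality of $u^k$ in $\widetilde F(x^k,\cdot)$) rests on $|Y^Tx^k|\in\partial\Psi^*(-u^k)$, i.e.\ on \eqref{subdifferentialhaha}, exactly as in Lemma~\ref{boundedrFtilde}; you cite this correctly.
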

\begin{proof}
  For every $ k\geq 0 $, using the strong convexity of the objective of \eqref{subp} (with modulus $t^{-1}$) and the feasibility of $d = 0$ for \eqref{subp}, we have from the optimality of $d^k$ that
	\begin{equation}
		\sum\limits_{i=1}^{p}u_i^k |[Y^T(x^k+d^k)]_i| \leq \sum\limits_{i=1}^{p}u_i^k |[Y^Tx^k]_i| - \frac{1}{t}\|d^k\|^2 \label{sc_subp2}.
	\end{equation}
In addition, note from Lemma~\ref{boundedrFtilde} that $(x^{k+1},u^k)\in {\rm dom}\,\widetilde F$. Then we have for all $k\ge 1$ that
	\begin{align*}
		& \widetilde F(x^{k+1},u^k) = \Psi^*(-u^k) +\langle u^k, |Y^T x^{k+1}|\rangle \\
& = \Psi^*(-u^k)  +\langle u^k, |Y^T (x^k + d^k)|\rangle+\langle u^k, |Y^T x^{k+1}| - |Y^T(x^k + d^k)|\rangle\\
& \le \Psi^*(-u^k)  +\langle u^k, |Y^T (x^k + d^k)|\rangle+\|u^k\|\|Y^T\|_2\left\|\frac{x^k+ d^k}{\|x^k+ d^k\|} - (x^k+d^k)\right\|\\
& \overset{\rm (a)}\le \Psi^*(-u^k)  +\langle u^k, |Y^T x^k|\rangle - \frac1t\|d^k\|^2 +\|u^k\|\|Y^T\|_2\left\|\frac{x^k+ d^k}{\|x^k+ d^k\|} - (x^k+d^k)\right\|\\
		& \overset{\rm (b)}\leq  \Psi^*(-u^k)  +\langle u^k, |Y^T x^{k}|\rangle + \left(\frac{\sqrt{p}\ell\|Y^T\|_2}{2}-\frac1t\right)\|d^k\|^2
\overset{\rm (c)}= -\Psi(|Y^T x^{k}|) + \left(\frac{L_f}{2}-\frac1t\right)\|d^k\|^2   \\
		& \overset{\rm (d)}\le \Psi^*(-u^{k-1}) +\langle u^{k-1}, |Y^T x^{k}|\rangle + \left(\frac{L_f}{2}-\frac1t\right)\|d^k\|^2  = \widetilde F(x^{k},u^{k-1})+ \left(\frac{L_f}{2}-\frac1t\right)\|d^k\|^2,
	\end{align*}
	where (a) follows from \eqref{sc_subp2}, (b) follows from \eqref{retrprop} (with $x = x^k$ and $ d=d^k $) and the fact that $|u_i^k|\le \ell$ (see \cite[Lemma~2.2(i)]{YuPong17}), (c) holds thanks to the fact that $u^k = \Phi'_+(|Y^Tx^k|) = -\Psi'(|Y^Tx^k|) \in -\partial\Psi(|Y^Tx^k|)$ and the equality case for the Young's inequality (see \cite[Theorem~23.5]{Ro70}), and we also used the definition of $L_f$ in Lemma~\ref{flip}, (d) follows from another application of the Young's inequality.
\end{proof}

Assuming that $ \widetilde F $ in \eqref{Ftilde} satisfies the KL property, we are now able to establish the global convergence of $\{x^k\}$, following the framework established in \cite{Attouch13,BolSabTeb14}. Indeed, since $x^k\in\mathcal{M}$ for all $k$, we see that $\{x^k\}$ is bounded. In addition, $\{u^k\}$ is also bounded because $\phi'_+$ is continuous. Moreover, in view of Lemma~\ref{boundedrFtilde2} and \eqref{infwF}, we know that $\{\widetilde F(x^k,u^{k-1})\}$ is nonincreasing and bounded below by $0$; hence, the limit $\lim_{k\to \infty}\widetilde F(x^k,u^{k-1})=:\tau$ exists. Now, if we let $\Xi$ denote the set of accumulation points of $\{(x^k,u^{k-1})\}$, then one can show that
$\widetilde F \equiv \tau$ on $\Xi$.\footnote{To see this, pick any $(\hat x,\hat u)\in \Xi$. Then we have from the lower semicontinuity of $\widetilde F$ that $\widetilde F(\hat x,\hat u)\le \tau$. As for the converse inequality, let $\{(x^{k_i+1},u^{k_i})\}$ be a subsequence converging to $(\hat x,\hat u)$. Then
\begin{align*}
  \tau &= \lim_{k\to \infty}\widetilde F(x^k,u^{k-1}) = \lim_{i\to \infty}\widetilde F(x^{k_i+1},u^{k_i}) = \lim_{i\to \infty} \Psi^*(-u^{k_i}) +\langle u^{k_i}, |Y^T x^{k_i+1}|\rangle\\
  & \overset{\rm (a)}= \lim_{i\to \infty} -\Psi(|Y^Tx^{k_i}|) +\langle u^{k_i}, |Y^T x^{k_i+1}| - |Y^Tx^{k_i}|\rangle \overset{\rm (b)}= -\Psi(|Y^T\hat x|) = F(\hat x)\le \widetilde F(\hat x,\hat u),
\end{align*}
where (a) follows from the definition of $u^{k_i}$ and the equality case for the Young's inequality (see \cite[Theorem~23.5]{Ro70}), (b) holds because Lemma~\ref{sta}(ii) and \eqref{retrprop_000} (with $x=x^{k_i}$ and $d=d^{k_i}$) imply that $\lim_{i\to\infty}\|x^{k_i+1} - x^{k_i}\| = 0$, and the last inequality follows from \eqref{infwF}.}
Using the above observations together with Lemma \ref{boundedrFtilde} and Lemma \ref{boundedrFtilde2}, one can deduce the global convergence of $ \{x^k\} $ by following the proofs in \cite[Theorem~2.9]{Attouch13} and \cite[Theorem~3.1]{LiPoTa19}; specifically, one can first deduce that $\{\|d^k\|\}$ is summable, which together with \eqref{retrprop_000} and the definition of $x^{k+1}$ shows that $\{\|x^{k+1}-x^k\|\}$ is summable, implying that $\{x^k\}$ is a Cauchy sequence. The proof is standard and we omit it here.
\begin{theorem}\label{prop1}
Suppose that $ \widetilde F $ in \eqref{Ftilde} is a KL function. Then the sequence $ \{x^k\} $ generated by $\Alg$ converges to a stationary point of \eqref{P0}.
\end{theorem}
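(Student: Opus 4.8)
Proof proposal. The plan is to run the by-now-standard Kurdyka–{\L}ojasiewicz descent argument of Attouch–Bolte–Svaiter type (see \cite[Theorem~2.9]{Attouch13}, \cite[Theorem~1]{BolSabTeb14}, \cite[Theorem~3.1]{LiPoTa19}), applied to the potential $\widetilde F$ in \eqref{Ftilde} along the \emph{staggered} sequence $\{(x^{k+1},u^k)\}$. The three ingredients it requires are already in place: the sufficient-decrease inequality $\widetilde F(x^{k+1},u^k)\le \widetilde F(x^k,u^{k-1})-(\tfrac1t-\tfrac{L_f}{2})\|d^k\|^2$ of Lemma~\ref{boundedrFtilde2} (with positive coefficient since $t<2/L_f$), the relative-error bound ${\rm dist}(0,\partial\widetilde F(x^{k+1},u^k))\le(C+\tfrac1t)\|d^k\|$ of Lemma~\ref{boundedrFtilde}, and a continuity-type statement on the accumulation set; the only extra bookkeeping is the index offset between the point where decrease is measured and the point where the subgradient is controlled.

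First I would set $a_k:=\widetilde F(x^k,u^{k-1})-\tau$, where $\tau:=\lim_k\widetilde F(x^k,u^{k-1})$ exists: by Lemma~\ref{boundedrFtilde2} the values $\widetilde F(x^k,u^{k-1})$ are nonincreasing, and they are bounded below by $0$ in view of \eqref{infwF} and $F\ge0$; hence $a_k\downarrow0$. Since $x^k\in\mathcal M$ and $u^{k-1}=\Phi'_+(|Y^Tx^{k-1}|)\in[0,\ell]^p$, the sequence $\{(x^k,u^{k-1})\}$ is bounded; let $\Xi$ be its (nonempty, compact) set of accumulation points. Standard arguments give ${\rm dist}((x^k,u^{k-1}),\Xi)\to0$, the footnoted computation (using $\|x^k-x^{k-1}\|\le\|d^{k-1}\|\to0$ from Theorem~\ref{sta}(ii), the Young equality case, and \eqref{infwF}) shows $\widetilde F\equiv\tau$ on $\Xi$, and $\Xi\subseteq{\rm dom}\,\partial\widetilde F$ because for every $(\widehat x,\widehat u)\in\Xi$ one has $|Y^T\widehat x|\in\partial\Psi^*(-\widehat u)$ by \eqref{subdifferentialhaha} and closedness of $\partial\Psi^*$, whence $\partial\widetilde F(\widehat x,\widehat u)\ne\emptyset$ by Proposition~\ref{subdifftildeF}. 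Therefore the uniformized KL inequality (\cite[Lemma~6]{BolSabTeb14}) applies: there are $\varepsilon>0$, $\eta>0$ and a desingularizing function $\varphi$ with $\varphi'(\widetilde F(x,u)-\tau)\,{\rm dist}(0,\partial\widetilde F(x,u))\ge1$ whenever $(x,u)$ is within $\varepsilon$ of $\Xi$ and $\tau<\widetilde F(x,u)<\tau+\eta$.

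Next I would dispose of the trivial case: if $a_{k_0}=0$ for some $k_0$, then since $\{\widetilde F(x^k,u^{k-1})\}$ is nonincreasing with limit $\tau$ it equals $\tau$ from index $k_0$ on, so Lemma~\ref{boundedrFtilde2} forces $d^k=0$ for all $k\ge k_0$ and $\{x^k\}$ is eventually constant. Otherwise $a_k>0$ for all $k$, and for all large $k$ the point $(x^{k+1},u^k)$ lies within $\varepsilon$ of $\Xi$ with $\tau<\widetilde F(x^{k+1},u^k)<\tau+\eta$; there the KL inequality together with Lemma~\ref{boundedrFtilde} gives $\varphi'(a_{k+1})\ge\big((C+\tfrac1t)\|d^k\|\big)^{-1}$. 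Combining this with concavity of $\varphi$ and Lemma~\ref{boundedrFtilde2},
\[
\varphi(a_{k+1})-\varphi(a_{k+2})\ \ge\ \varphi'(a_{k+1})\,(a_{k+1}-a_{k+2})\ \ge\ \frac{(\tfrac1t-\tfrac{L_f}{2})\,\|d^{k+1}\|^2}{(C+\tfrac1t)\,\|d^k\|},
\]
so $\|d^{k+1}\|^2\le\kappa\,\|d^k\|\big(\varphi(a_{k+1})-\varphi(a_{k+2})\big)$ for a constant $\kappa>0$; AM–GM then yields $\|d^{k+1}\|\le\tfrac12\|d^k\|+\tfrac{\kappa}{2}\big(\varphi(a_{k+1})-\varphi(a_{k+2})\big)$, and summing this telescoping bound shows $\sum_k\|d^k\|<\infty$. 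Finally, $\langle x^k,d^k\rangle=0$ and \eqref{xkplus1} let us apply \eqref{retrprop_000} with $x=x^k$, $d=d^k$ to get $\|x^{k+1}-x^k\|\le\|d^k\|$; hence $\{x^k\}$ is Cauchy, converges to some $x^*\in\mathcal M$, and $x^*$ is a stationary point of \eqref{P0} by Theorem~\ref{sta}(iv).

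The main obstacle I expect is not the telescoping estimate (routine once the two inequalities are in hand) but the verification that the uniformized KL machinery is legitimately applicable along the \emph{staggered} sequence $\{(x^{k+1},u^k)\}$: one must check that $\widetilde F$ is genuinely constant on $\Xi$ (hence the footnoted argument) and that $\Xi\subseteq{\rm dom}\,\partial\widetilde F$, and, because $\{a_k\}$ is monotone while ${\rm dist}(0,\partial\widetilde F)$ is controlled only at index $k+1$ rather than $k$, one must re-run the usual bootstrap of \cite[Theorem~2.9]{Attouch13} — showing the partial sums of $\|d^k\|$ stay small so the iterates do not leave the KL neighborhood — with the shifted index bookkeeping rather than copying it verbatim.
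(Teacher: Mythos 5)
Your proposal is correct and follows essentially the same route the paper indicates for Theorem~\ref{prop1}: the Attouch--Bolte--Svaiter-type KL descent argument applied to $\widetilde F$ along the staggered sequence $\{(x^{k+1},u^k)\}$, using Lemma~\ref{boundedrFtilde2} for sufficient decrease, Lemma~\ref{boundedrFtilde} for the subgradient bound, the footnoted argument that $\widetilde F\equiv\tau$ on the accumulation set, and then summability of $\{\|d^k\|\}$, \eqref{retrprop_000}, and Theorem~\ref{sta}(iv). The index-offset bookkeeping and the verification that $\Xi\subseteq{\rm dom}\,\partial\widetilde F$ are exactly the points the paper leaves to the reader, and you handle them correctly.
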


\section{Convergence rate and KL exponents}\label{sec4}

In Theorem~\ref{sta}, we established subsequential convergence of the sequence $\{x^k\}$ generated by $\Alg$. We then studied the global convergence of the whole sequence by considering the KL property of suitable potential functions, namely, \eqref{F} and \eqref{Ftilde}, in Theorems~\ref{thm1} and \ref{prop1}, respectively. In this section, we are interested in studying the KL exponent of these potential functions: it is known that the KL exponents of potential functions used for global convergence analysis are generally closely related to the local convergence rate of the sequence generated by the algorithm; see, e.g., \cite{Attouch10,LiPo18}.

\subsection{KL exponent of $F$ in \eqref{F} when $\phi(s)=s$}
When $\phi(s) = s$, problem \eqref{P0} reduces to
\begin{equation}\label{Manpp}
	\begin{array}{rl}
		\min\limits_{x\in\mathbb{R}^n} &\widehat f(x):=\left\|Y^Tx\right\|_1\\
		{\rm s.t.} & x\in {\cal M},
	\end{array}
\end{equation}
and our $\Alg$ reduces to the ManPPA in \cite{ChDeMaSo20} with constant stepsize.
As established in Theorem~\ref{thm1}, the global convergence of the sequence $\{x^k\}$ generated by $\Alg$ (with $\phi(s)=s$) for \eqref{Manpp} was proved based on the KL property of the function $\widehat F$ defined as
\begin{equation}\label{F1}
	\widehat F(x) = \|Y^Tx\|_1+\delta_{\cal M}(x)= \widehat{f}(x) + \delta_{\cal M} (x),
\end{equation}
which is obtained by setting $\phi(s)=s$ in $F$ in \eqref{F}. In this subsection, we study the convergence rate of $\Alg$ (and hence the ManPPA) for \eqref{Manpp}.

In \cite[Theorem~2.4]{ChDeMaSo20_2}, it was shown that if \eqref{Manpp} has a set of weak sharp minima $\mathcal X$, then there exist $C>0$, $\delta > 0$ and a sufficiently small stepsize $t$ (which is less than $2/L_f$ according to the line below \cite[Eq.~(C.6)]{ChDeMaSo20_2}) such that if ${\rm dist}(x^0,{\cal X})\le \delta$, then the sequence $\{x^k\}$ generated by ManPPA with constant stepsize satisfies
\[
{\rm dist}(x^{k+1},\mathcal{X})\le C{\rm dist}^2(x^k,\mathcal{X})
\]
for all large $k$. Here, we show that, \emph{without} the weak sharp minima assumption, the sequence $\{x^k\}$ generated by $\Alg$ converges linearly as long as the optimal value of \eqref{Manpp} is nonzero; moreover, if $\lim_{k\to\infty}x^k$ belongs to a set of weak sharp minima, then $\{x^k\}$ converges finitely. Our results improve the claims in \cite[Theorem~2.4]{ChDeMaSo20_2}. The key to these results is to estimate the KL exponent of $ \widehat F $ in \eqref{F1}. We present these results in the next two subsections.

\subsubsection{Linear convergence of $\Alg$ for \eqref{Manpp}}

We will establish the linear convergence of $\Alg$ for \eqref{Manpp} with nonzero optimal values by explicitly computing the KL exponent of \eqref{F1}.
We start with the following auxiliary result.
\begin{lemma}
	Let $\widehat{F}$ be defined in \eqref{F1} and $\bar x\in \mathcal {M}$. Define
	\begin{align}\label{G}
	G_{\bar x}(x)=\widehat{f}\left(\frac{x}{\|x\|}\right)+\delta_{B\left(\bar x,1/2\right)}(x),
	\end{align}
	where $\widehat{f}$ is defined in \eqref{Manpp}. Then
	\begin{align}\label{partialG}
	\partial G_{\bar x}(x) = \frac{1}{\|x\|}\left(I - \frac{x}{\|x\|}\left(\frac{x}{\|x\|}\right)^T\right) \partial \widehat{f}\left(\frac{x}{\|x\|}\right)\ \ \ \ \ \ \forall x\in {\rm int}(B(\bar x,1/2)).
	\end{align}
\end{lemma}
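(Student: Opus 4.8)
The plan is to compute $\partial G_{\bar x}(x)$ for $x$ in the interior of $B(\bar x,1/2)$ by decomposing $G_{\bar x}$ into the composition $\widehat f\circ R$ with the retraction-type map $R(x) := x/\|x\|$, plus the indicator $\delta_{B(\bar x,1/2)}$, and then invoking a chain rule. First I would note that for $x\in {\rm int}(B(\bar x,1/2))$ with $\bar x\in{\cal M}$ we have $\|x\|\ge \|\bar x\| - \|x-\bar x\| > 1/2 > 0$, so the map $R(x) = x/\|x\|$ is smooth on this open set, with Jacobian
\[
DR(x) = \frac{1}{\|x\|}\left(I - \frac{x}{\|x\|}\left(\frac{x}{\|x\|}\right)^T\right),
\]
a standard computation. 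Since $x$ lies in the interior of $B(\bar x,1/2)$, the indicator $\delta_{B(\bar x,1/2)}$ contributes nothing to the subdifferential there: $\partial \delta_{B(\bar x,1/2)}(x) = \{0\}$ and $\partial^\infty \delta_{B(\bar x,1/2)}(x) = \{0\}$. Hence on this open set $G_{\bar x}$ agrees (up to the harmless indicator) with $\widehat f\circ R$.

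Next I would apply the chain rule for subdifferentials of a composition $g\circ R$ with $R$ continuously differentiable — e.g. \cite[Theorem~10.6]{RoWe97} — which yields $\partial(\widehat f\circ R)(x) = DR(x)^T\,\partial \widehat f(R(x))$, provided the constraint qualification holds; since $R$ is smooth and surjective onto a neighborhood of $R(x)$ in terms of its derivative being the relevant linear map, the qualification is automatic (the only element of $\partial^\infty \widehat f$ relevant is $0$ because $\widehat f$ is finite-valued and locally Lipschitz, being a composition of a norm with a linear map). Because $DR(x)$ is symmetric, $DR(x)^T = DR(x)$, and substituting $R(x) = x/\|x\|$ gives exactly \eqref{partialG}. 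To get the exact equality (rather than an inclusion) I would also invoke that $\widehat f$ is regular — which follows from Lemma~\ref{flip} with $\phi(s)=s$ — so that the chain rule holds with equality and $G_{\bar x}$ is itself regular at $x$; combined with the exact additivity from the vanishing indicator subdifferential, this yields the stated formula.

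The main obstacle, though a mild one, is verifying the constraint qualification / regularity hypotheses needed for the chain rule to hold with equality rather than mere inclusion, and making sure the contribution of $\delta_{B(\bar x,1/2)}$ genuinely vanishes (which requires $x$ to be in the \emph{interior}, hence the restriction in \eqref{partialG}). One must also be slightly careful that $\widehat f$ is globally Lipschitz — it is, with modulus $\|Y\|_2$ by Lemma~\ref{flip} — so no horizon-subdifferential obstructions arise, and the sum rule \cite[Corollary~10.9]{RoWe97} applies cleanly to $\widehat f\circ R + \delta_{B(\bar x,1/2)}$. Everything else is the routine Jacobian computation for $R$ and bookkeeping with the symmetry of the projector $I - \tfrac{x}{\|x\|}(\tfrac{x}{\|x\|})^T$.
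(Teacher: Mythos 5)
Your proposal is correct and follows essentially the same route as the paper: on ${\rm int}(B(\bar x,1/2))$ the function $G_{\bar x}$ coincides with $\widehat f\circ R$ for the smooth map $R(x)=x/\|x\|$, and since $\widehat f$ is convex (hence regular), the chain rule \cite[Theorem~10.6]{RoWe97} gives \eqref{partialG} with equality. The paper's proof is just a terser version of yours, citing regularity of $\widehat f$ via \cite[Proposition~8.12]{RoWe97} rather than Lemma~\ref{flip}, and omitting the explicit Jacobian computation and the discussion of the vanishing indicator as routine.
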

\begin{proof}
Note that $G_{\bar x}(x) = \widehat f(x/\|x\|)$ for any $x\in {\rm int}(B(\bar x,1/2))$. In addition, since $\widehat f$ is convex, we know from \cite[Proposition~8.12]{RoWe97} that $\widehat f$ is regular. The desired conclusion now follows immediately from these observations and
\cite[Theorem 10.6]{RoWe97}.
\end{proof}
We next relate the KL property of $\widehat{F}$ in \eqref{F1} at $ \bar x\in\mathcal M $ to that of $ G_{\bar x} $ in \eqref{G}. Notice that
\begin{equation}\label{domains}
  {\rm dom}\,\partial \widehat F = {\cal M}\ \ \ \ {\rm and}\ \ \ \ {\rm dom}\,\partial G_{\bar x} = {\rm dom}\,G_{\bar x} = B(\bar x,1/2),
\end{equation}
where the first relation comes from Lemma~\ref{prop:partialF} and the second relation follows from \cite[Corollary~10.9]{RoWe97}.
\begin{lemma}\label{lem1}
Let $\widehat{F}$ be defined in \eqref{F1} and $\bar x\in {\rm dom}\,\partial\widehat F$. Let $G_{\bar x}$ be defined in \eqref{G}. Then the following statements hold.
\begin{enumerate}[{\rm (i) }]
	\item If $ \widehat{F} $ satisfies the KL property at $ \bar x $ with exponent $ \alpha\in(0,1) $, then  $ G_{\bar x}$ satisfies the KL property at $ \bar x $ with exponent $ \alpha $.
	\item If $ G_{\bar x}$ satisfies the KL property at $ \bar x $ with exponent $ \alpha\in(0,1) $, then $ \widehat{F} $ satisfies the KL property at $ \bar x $ with exponent $ \alpha $.
\end{enumerate}
\end{lemma}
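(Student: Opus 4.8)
The plan is to exploit the tight connection between $\widehat F$ restricted to $\mathcal M$ and $G_{\bar x}$, which is obtained by composing $\widehat f$ with the radial retraction $x\mapsto x/\|x\|$. The two key facts that make this work are: (1) the feasible points of both functions coincide in the relevant region, since $\widehat F(\bar x) = G_{\bar x}(\bar x) = \widehat f(\bar x)$ for $\bar x \in \mathcal M$, and more generally $\widehat F(x) = G_{\bar x}(x) = \widehat f(x)$ for $x \in \mathcal M \cap {\rm int}(B(\bar x,1/2))$; and (2) the subdifferential formulas \eqref{gjr0} and \eqref{partialG} are \emph{identical} when evaluated at $x \in \mathcal M$, because there $\|x\| = 1$ and $(I - xx^T)\partial\widehat f(x)$ is exactly the projected subdifferential appearing in both. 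So for $x \in \mathcal M$ near $\bar x$ we have ${\rm dist}(0,\partial \widehat F(x)) = {\rm dist}(0,\partial G_{\bar x}(x))$ and $\widehat F(x) = G_{\bar x}(x)$. This immediately settles the direction where we start from a point already on $\mathcal M$.

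For part (i), assume $\widehat F$ satisfies the KL property at $\bar x$ with exponent $\alpha$, giving a neighborhood $V$ and constant $a$. I would take the neighborhood for $G_{\bar x}$ to be $V' := V \cap {\rm int}(B(\bar x,1/2))$ (shrinking $V$ if necessary so $V' \subseteq {\rm int}(B(\bar x,1/2))$), and then for a general $x \in V'$ I need the KL inequality $\varphi'(G_{\bar x}(x) - G_{\bar x}(\bar x)){\rm dist}(0,\partial G_{\bar x}(x)) \ge 1$. The point is that $G_{\bar x}(x) = \widehat f(x/\|x\|)$ and, by \eqref{partialG}, ${\rm dist}(0,\partial G_{\bar x}(x)) = \frac{1}{\|x\|}{\rm dist}(0,(I - \hat x\hat x^T)\partial\widehat f(\hat x))$ where $\hat x := x/\|x\| \in \mathcal M$. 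So $G_{\bar x}(x) = \widehat F(\hat x)$ and ${\rm dist}(0,\partial G_{\bar x}(x)) = \frac{1}{\|x\|}{\rm dist}(0,\partial\widehat F(\hat x))$. We have $\hat x$ near $\bar x$ (since $x$ is near $\bar x \in \mathcal M$, $\hat x$ is too, using continuity of the retraction — and Lemma~\ref{deltaM}(iii) can bound $\|\hat x - \bar x\|$), so $\hat x \in V$, and the KL inequality for $\widehat F$ at $\hat x$ gives $\varphi'(\widehat F(\hat x) - \widehat F(\bar x)){\rm dist}(0,\partial\widehat F(\hat x)) \ge 1$. Combining, and noting $\|x\|$ is bounded (it lies in, say, $[1/2,3/2]$ on $B(\bar x,1/2)$ since $\|\bar x\|=1$), we get $\varphi'(G_{\bar x}(x) - G_{\bar x}(\bar x)){\rm dist}(0,\partial G_{\bar x}(x)) \ge 1/\|x\| \ge 2/3$; absorbing the constant $2/3$ into $a_0$ (which is legitimate since rescaling $\varphi(\nu) = a_0\nu^{1-\alpha}$ by a positive constant preserves the form and the exponent) finishes part (i). One must also handle $x$ with $G_{\bar x}(x) = G_{\bar x}(\bar x)$ or on the boundary, but these are excluded by the standing hypothesis $h(\bar x) < h(x) < h(\bar x) + a$ in the definition, and $V'$ can be taken inside the open ball.

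For part (ii), the argument runs symmetrically but now restricting to $x \in \mathcal M$ from the start: given a KL neighborhood $W$ and constant $a$ for $G_{\bar x}$ at $\bar x$, take $V := W \cap {\rm int}(B(\bar x,1/2))$ as the neighborhood for $\widehat F$. For $x \in V \cap \mathcal M$ (which is all that matters, since ${\rm dom}\,\partial\widehat F = \mathcal M$ by \eqref{domains}), we have $x/\|x\| = x$, so $\widehat F(x) = G_{\bar x}(x)$ and by the identity of subdifferential formulas at norm-one points, ${\rm dist}(0,\partial\widehat F(x)) = {\rm dist}(0,\partial G_{\bar x}(x))$; the KL inequality for $G_{\bar x}$ transfers verbatim, with the same exponent $\alpha$ and the same $\varphi$. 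The main obstacle is the bookkeeping in part (i): carefully checking that the retracted point $\hat x = x/\|x\|$ still lands in the KL neighborhood $V$ of $\widehat F$ (this needs a quantitative bound relating $\|\hat x - \bar x\|$ to $\|x - \bar x\|$, which is where \eqref{retrprop_000} of Lemma~\ref{deltaM}(iii) enters after writing $x = \bar x + (x - \bar x)$ and splitting off the component of $x - \bar x$ orthogonal to $\bar x$, or more simply just using continuity of $x \mapsto x/\|x\|$ near $\mathcal M$), and keeping track of the harmless multiplicative constant $1/\|x\| \in [2/3, 2]$ which does not affect the exponent. The rest is a direct transcription of the defining inequalities.
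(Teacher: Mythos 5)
Your proposal is correct and follows essentially the same route as the paper: both arguments hinge on the identity $G_{\bar x}(x)=\widehat F(x/\|x\|)$ together with \eqref{partialG} and \eqref{gjr0}, which give ${\rm dist}(0,\partial G_{\bar x}(x))=\frac{1}{\|x\|}\,{\rm dist}(0,\partial\widehat F(x/\|x\|))$, continuity of the radial retraction to land in the KL neighborhood for part~(i), and the exact coincidence of values and subdifferential distances at points of $\mathcal M$ for part~(ii). The only cosmetic difference is that the paper phrases the KL property in the equivalent error-bound form ${\rm dist}(0,\partial h(x))^{1/\alpha}\ge c\,(h(x)-h(\bar x))$, whereas you carry the factor $1/\|x\|\in[2/3,2]$ through the $\varphi'$ inequality and absorb it by rescaling $a_0$.
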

\begin{proof}
	For any $ x \in {\rm int}(B(\bar x, 1/2))$, using \eqref{partialG}, it holds that
	\begin{equation}\label{distFG}
	\begin{split}
	& {\rm dist}(0, \partial G_{\bar x}(x))  = \frac{1}{\|x\|}\inf\limits_{\zeta\in\partial \widehat{f}\left(\frac{x}{\|x\|}\right)} \left\|\left(I- \frac{x}{\|x\|}\left(\frac{ x}{\|x\|}\right)^T\right)\zeta\right\|\\
	&\geq \frac{2}{3} \inf\limits_{\zeta\in\partial \widehat{f}\left(\frac{x}{\|x\|}\right)} \left\|\left(I- \frac{x}{\|x\|}\left(\frac{ x}{\|x\|}\right)^T\right)\zeta\right\|,
	\end{split}
	\end{equation}
	where the inequality holds because $  x\in B(\bar x, 1/2) $ and thus $ \|x\| \leq 3/2 $.
	Since $ \widehat{F} $ satisfies the KL property at $ \bar x \in {\rm dom}\,\partial \widehat F = {\cal M}$ (see \eqref{domains}) with exponent $ \alpha $ and is continuous on its domain, there exist $ c_1>0 $ and $ \epsilon_1>0 $ such that
	\begin{equation}\label{gjr1}
	{\rm dist}(0,\partial \widehat{F}(x))^\frac{1}{\alpha} \geq c_1(\widehat{F}(x)-\widehat{F}(\bar x)) =  c_1(\widehat{f} (x)-\widehat{f} (\bar x))\ \ \ \ \forall x \in \mathcal{M}\cap  B(\bar x, \epsilon_1).
	\end{equation}
	Since $x\mapsto {x}/{\|x\|}$ is continuous, there exists $\epsilon_2\in (0,1/2)$ such that when $x\in B(\bar x,\epsilon_2)$, it holds that ${x}/{\|x\|}\in B(\bar x,\epsilon_1)\cap \mathcal{M}$. Hence, for any $x\in B(\bar x,\epsilon_2)$, writing $u = {x}/{\|x\|}$ and using \eqref{distFG}, we see that
	\begin{equation*}\label{gjr2}
	\begin{split}
	&{\rm dist}(0, \partial G_{\bar x}(x))^\frac{1}{\alpha} \ge \left(\frac{2}{3}\right)^\frac{1}{\alpha}  \inf\limits_{\zeta\in\partial \widehat{f}(u)} \|(I- uu^T)\zeta\|^\frac{1}{\alpha}  \stackrel{\rm (a)}= \left(\frac{2}{3}\right)^\frac{1}{\alpha}{\rm dist} (0, \partial \widehat{F}(u))^\frac{1}{\alpha}\\
	& \ge  \left(\frac{2}{3}\right)^\frac{1}{\alpha}c_1(\widehat{f}(u)-\widehat{f} (\bar x)) = \left(\frac{2}{3}\right)^\frac{1}{\alpha} c_1(G_{\bar x}(x)-G_{\bar x}(\bar x)),
	\end{split}
	\end{equation*}
	where (a) follows from \eqref{gjr0}, the second inequality holds because of \eqref{gjr1}, and the last equality holds thanks to the definitions of $u$ and $G_{\bar x}$. This proves item (i).
	
	Now we turn to item~(ii). Note that $ \bar x\neq 0 $ (see \eqref{domains}). Suppose that $ G_{\bar x} $ satisfies the KL property at $ \bar x $ with exponent $ \alpha $. Since $G_{\bar x}$ is continuous on its domain,  there exist $ c_2>0 $ and $ \epsilon_3 \in (0,1/2)$ such that
	\begin{equation}\label{gjr5}
	{\rm dist}(0,\partial G_{\bar x}(x))^\frac{1}{\alpha} \geq c_2(G_{\bar x}(x)-G_{\bar x}(\bar x))\ \ \ \ \ \ \forall x\in B(\bar x, \epsilon_3)\cap {\rm dom}\,\partial G_{\bar x}.
	\end{equation}
	Since $\epsilon_3<1/2$, we see from \eqref{domains} that  $B(\bar x, \epsilon_3)\cap {\rm dom}\,\partial G_{\bar x} = B(\bar x, \epsilon_3)$.  This together with \eqref{gjr5} yields that
	\begin{equation*}
	\begin{split}
	&{\rm dist}(0,\partial \widehat{F}(x))^\frac{1}{\alpha} \stackrel{{\rm (a)}}={\rm dist}(0,\partial G_{\bar x}(x))^\frac{1}{\alpha} \geq c_2(G_{\bar x}(x)-G_{\bar x}(\bar x)) = c_2(\widehat{F}(x)-\widehat{F}(\bar x))
	\end{split}
	\end{equation*}
	whenever $ x\in B(\bar x, \epsilon_3)\cap \mathcal{M}$,
	where (a) holds because of the fact that $x\in\mathcal{M}$, \eqref{partialG} and \eqref{gjr0}, and the last equality holds because $x\in B(\bar x, \epsilon_3)\cap \mathcal{M}\subseteq B(\bar x,1/2)\cap \mathcal{M}$. This proves item (ii).
\end{proof}

Our next theorem shows that as long as the optimal value of \eqref{Manpp} is not zero, the function $\widehat F$ in \eqref{F1} is always a KL function with exponent $1/2$.
\begin{theorem}[KL exponent of $ 1/2 $]\label{KL12}
	Suppose that the optimal value of \eqref{Manpp} is not 0 and let $\widehat{F}$ be defined in \eqref{F1}. Then $\widehat{F}$ is a KL function with exponent $ 1/2 $.
\end{theorem}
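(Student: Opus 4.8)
The plan is to establish, for every $\bar x\in{\rm dom}\,\partial\widehat F=\mathcal M$, that $G_{\bar x}$ in \eqref{G} has the KL property at $\bar x$ with exponent $1/2$; Theorem~\ref{KL12} then follows from Lemma~\ref{lem1}(ii). If $0\notin\partial G_{\bar x}(\bar x)$ — equivalently, by \eqref{partialG}--\eqref{gjr0}, if $\bar x$ is \emph{not} a stationary point of \eqref{Manpp} — this is immediate: since the limiting subdifferential is outer semicontinuous and $G_{\bar x}$ is continuous on ${\rm dom}\,G_{\bar x}=B(\bar x,1/2)$, the map $x\mapsto{\rm dist}(0,\partial G_{\bar x}(x))$ is bounded below by some $c_0>0$ near $\bar x$, and the KL inequality holds there with $\varphi(\nu)=a_0\nu^{1/2}$ once $a_0$ is large. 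So it remains to treat a stationary point $\bar x$; by hypothesis $f_0:=\widehat f(\bar x)\ge$ (optimal value of \eqref{Manpp})$\,>0$, and I will use that $\widehat f$ is convex and positively homogeneous of degree $1$, so that $\langle\bar x,\zeta\rangle=\widehat f(\bar x)$ for every $\zeta\in\partial\widehat f(\bar x)$ (Euler relation) and $\partial\widehat f(tx)=\partial\widehat f(x)$ for $t>0$.

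The key device is the auxiliary \emph{polyhedral convex} function
\[
\widehat F_{\bar x}(x):=\widehat f(x)+\delta_{H}(x),\qquad H:=\{y:\langle\bar x,y\rangle\ge1\}.
\]
First I would show $\bar x$ minimizes $\widehat F_{\bar x}$. Stationarity of $\bar x$ for \eqref{Manpp} together with \eqref{partialF} and Lemma~\ref{deltaM}(i) gives $-\lambda\bar x\in\partial\widehat f(\bar x)$ for some $\lambda\in\R$; evaluating the Euler relation at this subgradient forces $-\lambda=f_0>0$, i.e.\ $f_0\bar x\in\partial\widehat f(\bar x)$. Since $\langle\bar x,\bar x\rangle=1$, the point $\bar x$ lies on the boundary of $H$ and $N_H(\bar x)=-\R_+\bar x$, whence $0=f_0\bar x+(-f_0\bar x)\in\partial\widehat f(\bar x)+N_H(\bar x)=\partial\widehat F_{\bar x}(\bar x)$; by convexity $\bar x\in\Argmin\widehat F_{\bar x}$ and $\min\widehat F_{\bar x}=f_0$. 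Since $\widehat F_{\bar x}$ is polyhedral, there are $\alpha>0$ and $\delta>0$ with ${\rm dist}(0,\partial\widehat F_{\bar x}(y))\ge\alpha$ whenever $\|y-\bar x\|\le\delta$ and $\widehat F_{\bar x}(y)>f_0$ — e.g.\ because $\partial\widehat F_{\bar x}$ takes only finitely many values, none of which contains $0$ away from $\Argmin\widehat F_{\bar x}$, or because $\Argmin\widehat F_{\bar x}$ is a set of weak sharp minima (see, e.g., \cite{BuFe93}).

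Next I would transfer this bound to $G_{\bar x}$. Fix $u\in\mathcal M$ close to $\bar x$ with $G_{\bar x}(u)=\widehat f(u)>f_0=G_{\bar x}(\bar x)$, the only regime relevant to the KL inequality at $\bar x$. Then $\langle\bar x,u\rangle>0$, so $\tilde u:=u/\langle\bar x,u\rangle$ lies on the hyperplane $\{\langle\bar x,\cdot\rangle=1\}\subseteq H$, is close to $\bar x$, and satisfies $\widehat F_{\bar x}(\tilde u)=\widehat f(\tilde u)=\widehat f(u)/\langle\bar x,u\rangle\ge\widehat f(u)>f_0$ because $\langle\bar x,u\rangle\le1$. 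Let $\zeta^\ast\in\partial\widehat f(u)$ attain ${\rm dist}(0,\partial G_{\bar x}(u))$; by \eqref{partialG} (with $\|u\|=1$) and the Euler relation, ${\rm dist}(0,\partial G_{\bar x}(u))=\|(I-uu^T)\zeta^\ast\|=\|\zeta^\ast-\widehat f(u)u\|$. Since $\tilde u$ is a positive multiple of $u$ we have $\partial\widehat f(\tilde u)=\partial\widehat f(u)\ni\zeta^\ast$; as $\widehat f(u)>0$ and $N_H(\tilde u)=-\R_+\bar x$, it follows that $\zeta^\ast-\widehat f(u)\bar x\in\partial\widehat f(\tilde u)+N_H(\tilde u)=\partial\widehat F_{\bar x}(\tilde u)$. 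Hence, by the triangle inequality,
\[
\alpha\le{\rm dist}(0,\partial\widehat F_{\bar x}(\tilde u))\le\|\zeta^\ast-\widehat f(u)\bar x\|\le\|\zeta^\ast-\widehat f(u)u\|+\widehat f(u)\,\|u-\bar x\|={\rm dist}(0,\partial G_{\bar x}(u))+\widehat f(u)\,\|u-\bar x\|.
\]
As $u\to\bar x$ the term $\widehat f(u)\|u-\bar x\|\to0$ (by continuity of $\widehat f$), so ${\rm dist}(0,\partial G_{\bar x}(u))\ge\alpha/2$ for $u\in\mathcal M$ close enough to $\bar x$ with $\widehat f(u)>f_0$; since also $G_{\bar x}(u)-G_{\bar x}(\bar x)=\widehat f(u)-f_0\to0$, this yields the KL inequality at $\bar x$ with exponent $1/2$ along $\mathcal M$. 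Finally, for general $x\in B(\bar x,1/2)$ close to $\bar x$ one has $G_{\bar x}(x)=G_{\bar x}(x/\|x\|)$ and, by \eqref{partialG}, ${\rm dist}(0,\partial G_{\bar x}(x))=\|x\|^{-1}{\rm dist}(0,\partial G_{\bar x}(x/\|x\|))\ge\tfrac23\,{\rm dist}(0,\partial G_{\bar x}(x/\|x\|))$, so the same inequality holds for $x$ (with $\alpha/3$ in place of $\alpha/2$); thus $G_{\bar x}$ satisfies the KL property at $\bar x$ with exponent $1/2$, and Lemma~\ref{lem1}(ii) completes the proof.

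The hard part will be the third step: comparing the sphere-constrained ${\rm dist}(0,\partial G_{\bar x}(u))$ with the half-space-constrained ${\rm dist}(0,\partial\widehat F_{\bar x}(\tilde u))$ across the mismatch between $\mathcal M$ and its tangent hyperplane at $\bar x$. Both hypotheses are used essentially here: positive homogeneity of $\widehat f$ makes $\partial\widehat f$ constant along rays and supplies the Euler relation, while positivity of the optimal value guarantees $f_0>0$, hence $\langle\bar x,u\rangle>0$ (so the rescaling to $\tilde u$ is legitimate) and $\widehat f(u)\ge0$ (so $-\widehat f(u)\bar x$ is an admissible element of $N_H(\tilde u)$); the geometric mismatch itself contributes only the lower-order term $\widehat f(u)\|u-\bar x\|$, which is dominated by the fixed constant $\alpha$.
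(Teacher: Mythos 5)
Your argument is correct, but it takes a genuinely different route from the paper. The paper's proof never touches the first-order geometry directly: it squares the objective, observes that $\|Y^Tx\|_1^2-\|Y^T\bar x\|_1^2\|x\|^2/\|\bar x\|^2$ is a minimum of finitely many ``quadratic plus polyhedral'' functions so that \cite[Corollary~5.2]{LiPo18} gives exponent $1/2$, and then transfers this to the quotient $\|Y^Tx\|_1^2/\|x\|^2$ and to $G_{\bar x}$ via the calculus rules \cite[Theorem~4.2, Proposition~4.3]{ZeYuPo20} before invoking Lemma~\ref{lem1}(ii). You instead exploit that $\widehat f$ is convex and positively homogeneous (Euler's relation $\langle\bar x,\zeta\rangle=\widehat f(\bar x)$ and ray-invariance of $\partial\widehat f$), identify $f_0\bar x\in\partial\widehat f(\bar x)$ at a stationary point, and compare the spherical problem with the \emph{polyhedral convex} restriction $\widehat f+\delta_H$ to the tangent half-space, whose weak sharp minimality supplies a uniform lower bound $\alpha$ on subgradient norms off the argmin; the rescaling $u\mapsto u/\langle\bar x,u\rangle$ then shows the spherical Riemannian subgradient differs from a half-space subgradient by only $\widehat f(u)\|u-\bar x\|=o(1)$. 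I checked the individual steps (the sign $-\lambda=f_0>0$, the inclusion $-\widehat f(u)\bar x\in N_H(\tilde u)$, which needs $\widehat f(u)\ge0$, the identity $\|(I-uu^T)\zeta^*\|=\|\zeta^*-\widehat f(u)u\|$, and the attainment of $\zeta^*$ by compactness of $\partial\widehat f(u)$) and they hold. Note that what you actually prove is stronger than the stated theorem: you obtain a \emph{constant} lower bound on ${\rm dist}(0,\partial G_{\bar x}(x))$ on the set $\{G_{\bar x}>G_{\bar x}(\bar x)\}$ near $\bar x$, i.e.\ the KL property with exponent $0$ at every point of ${\cal M}$ when the optimal value is positive (exponent $0$ trivially implies exponent $1/2$ after enlarging $a_0$, since $h(x)-h(\bar x)<a$). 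This is consistent with, and strengthens, the paper's Theorems~\ref{KL0} and \ref{convergefinite}, which obtain exponent $0$ only at weak sharp minima (but without assuming a positive optimal value); the paper's calculus-based route cannot see this because squaring the objective genuinely degrades the exponent to $1/2$. The only places a referee would ask you to expand are the citation for the error bound ${\rm dist}(0,\partial(\widehat f+\delta_H)(y))\ge\alpha$ off the argmin of a polyhedral convex function (standard, via \cite{BuFe93} plus the convex subgradient inequality against the projection onto the argmin) and the outer-semicontinuity argument in the nonstationary case, which is exactly \cite[Lemma~2.1]{LiPo18}.
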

\begin{proof}
Fix any $\bar x\in {\rm dom}\,\partial\widehat F = {\cal M}$ (see \eqref{domains}) and let $G_{\bar x}$ be defined in \eqref{G}. Then $\|\bar x\|=1$. Define
	\[
	\widetilde G_{\bar x}(x) = \widetilde f_{\bar x}(x) + \delta_{B\left(\bar x,1/2\right)}(x)
	\ \ \ {\rm where}\ \ \
	\widetilde f_{\bar x}(x): = \|Y^Tx\|_1^2 - \frac{\|Y^T\bar x\|^2_1}{\|\bar x\|^2}\|x\|^2.
	\]
	Note that $\widetilde f_{\bar x}(x) = \min_{\sigma \in {\frak R}}\{Q_\sigma(x) + P_\sigma(x)\}$ for some quadratic functions $Q_\sigma$ and polyhedral functions $P_\sigma$ indexed by $\sigma$, where ${\frak R} := \{u\in \R^n:\; u_i \in \{1,-1\}\ \ \forall i\}$: specifically, for each $\sigma\in {\frak R}$, we define $P_\sigma$ as the indicator function of the set $\{x:\; \sigma\circ( Y^Tx)\ge 0\}$, and $Q_\sigma(x) := (\langle\sigma,Y^Tx\rangle)^2-\frac{\|Y^T\bar x\|^2_1}{\|\bar x\|^2}\|x\|^2$. One can then deduce from \cite[Corollary~5.2]{LiPo18} that $\widetilde f_{\bar x}$ is a KL function with exponent $1/2$.
	
	Next, for $x\in {\rm int}(B\left(\bar x,1/2\right))$, we have
	\[
	\partial  \widetilde G_{\bar x}(x)  = \partial  \widetilde f_{\bar x}(x) {\rm \ and \ }\widetilde G_{\bar x}(x) = \widetilde f_{\bar x}(x).
	\]
	Thus,  $ \widetilde G_{\bar x}$ satisfies the KL property with exponent $1/2$ at $\bar x$.
	Now, define
	\[
	\bar G_{\bar x}(x) = \frac{\|Y^Tx\|_1^2}{\|x\|^2} + \delta_{B(\bar x,1/2)}(x).
	\]
Then ${\rm dom}\,\partial \bar G_{\bar x} = B(\bar x,1/2)$ (see \cite[Corollary~10.9]{RoWe97}).
	If $0\not \in \partial \bar  G_{\bar x}(\bar x)$, using \cite[Lemma~2.1]{LiPo18},  it holds that $\bar G_{\bar x}$    has KL property at $\bar x$ with exponent $1/2$. If $0\in \partial \bar  G_{\bar x}(\bar x)$, combining \cite[Theorem~4.2]{ZeYuPo20} with the fact that $ \widetilde G_{\bar x}$ satisfies the KL property with exponent $1/2$ at $\bar x$, we know that $\bar G_{\bar x}$  also satisfies the KL property at $\bar x$ with exponent $1/2$. Using this together with the fact that $ G_{\bar x}(\bar x) = \widehat{F}(\bar x)>0$ (since the optimal value of \eqref{Manpp} is not $0$ by assumption) and \cite[Proposition~4.3]{ZeYuPo20}, we deduce  that $G_{\bar x}$ satisfies the KL property with exponent $1/2$ at $\bar x$. The desired result now follows from Lemma \ref{lem1}(ii).
\end{proof}

Now we can establish the local convergence rate of $\Alg$ for solving \eqref{Manpp} by following a similar derivation in \cite[Theorem~3.4]{Attouch10}, and we omit the proof here.
\begin{theorem}[Local linear rate]\label{conv12}
	Let $\{x^k\}$ be the sequence generated by $\Alg$ with $\phi(s)=s$ for solving \eqref{Manpp} and suppose that the optimal value of \eqref{Manpp} is not 0. Then $\{x^k\}$ converges linearly to a stationary point of \eqref{Manpp}.
\end{theorem}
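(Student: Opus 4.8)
The plan is to derive local linear convergence of $\{x^k\}$ from the KL property of the potential function $\widehat F$ in \eqref{F1} with exponent $1/2$ (Theorem~\ref{KL12}), combined with the descent and subgradient-bound estimates already established for $\Alg$ specialized to $\phi(s)=s$. Concretely, since $\phi(s)=s$ has $\phi_+'\equiv 1$ locally Lipschitz, Theorem~\ref{thm1} applies once we know $\widehat F$ is a KL function, which it is by Theorem~\ref{KL12}; hence $\{x^k\}$ converges to some stationary point $x^\infty\in\mathcal M$ of \eqref{Manpp}, and $\widehat F(x^k)\downarrow \widehat F(x^\infty)$.

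The core estimates to assemble are: (1) the sufficient-decrease inequality from Theorem~\ref{sta}(i), namely $\widehat F(x^{k+1})\le \widehat F(x^k) - (\tfrac1t-\tfrac{L_f}2)\|d^k\|^2$; (2) the relative-error bound from Lemma~\ref{distbound}, namely ${\rm dist}(0,\partial \widehat F(x^{k+1}))\le (C+\tfrac1t)\|d^k\|$; (3) the step-length control from \eqref{retrprop_000}, namely $\|x^{k+1}-x^k\|\le\|d^k\|$; and (4) the KL inequality at $x^\infty$ with exponent $1/2$: there exist $c>0$ and a neighborhood of $x^\infty$ on which ${\rm dist}(0,\partial\widehat F(x))^2\ge c(\widehat F(x)-\widehat F(x^\infty))$ whenever $\widehat F(x)>\widehat F(x^\infty)$. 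I would then run the standard Attouch--Bolte argument: assuming $\widehat F(x^k)>\widehat F(x^\infty)$ for all $k$ (the finite-termination case being trivial), set $r_k:=\widehat F(x^k)-\widehat F(x^\infty)$ and combine (1), (2), (4) to obtain $r_{k+1}\le r_k - \tfrac1{\kappa}{\rm dist}(0,\partial\widehat F(x^{k+1}))^2 \le r_k - \tfrac{c}{\kappa} r_{k+1}$ for suitable constants, where one uses (2) to pass from $\|d^k\|^2$ to ${\rm dist}(0,\partial\widehat F(x^{k+1}))^2$; this gives $r_{k+1}\le \rho\, r_k$ for some $\rho\in(0,1)$, i.e. Q-linear decay of the function values. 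Feeding this back through (1) gives $\|d^k\|^2 = O(\rho^k)$, hence $\|d^k\| = O(\rho^{k/2})$, and then (3) together with telescoping/geometric summation of $\sum_{j\ge k}\|x^{j+1}-x^j\| = O(\rho^{k/2})$ yields $\|x^k-x^\infty\| = O(\rho^{k/2})$, which is the claimed linear rate.

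Since the paper explicitly says to follow \cite[Theorem~3.4]{Attouch10} and omits the proof, I would present this as the standard KL-exponent-$1/2$ argument, emphasizing only the three inputs that are particular to $\Alg$: Theorem~\ref{sta}(i), Lemma~\ref{distbound}, and \eqref{retrprop_000}, plus Theorem~\ref{KL12} for the exponent. The main obstacle, conceptually, is already dispatched by Theorem~\ref{KL12}: establishing that the KL exponent of $\widehat F$ is exactly $1/2$ when the optimal value is positive. Given that, the remaining work is the routine bookkeeping of chaining the inequalities to extract the geometric rate on the iterates themselves — the only mild subtlety being to handle the dichotomy between $\widehat F(x^k)>\widehat F(x^\infty)$ eventually (linear case) and $\widehat F(x^k)=\widehat F(x^\infty)$ for some $k$, where one checks via Theorem~\ref{sta}(ii) and \eqref{retrprop_000} that the sequence is in fact eventually constant, so convergence is still (trivially) linear.
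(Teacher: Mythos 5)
Your proposal is correct and follows essentially the same route the paper intends: the paper explicitly omits the proof, pointing to the standard derivation of \cite[Theorem~3.4]{Attouch10} built on Theorem~\ref{KL12} (exponent $1/2$), Theorem~\ref{sta}(i), Lemma~\ref{distbound} and \eqref{retrprop_000}, which are exactly the four ingredients you chain together. Your handling of the finite-termination dichotomy and the passage from Q-linear decay of $\widehat F(x^k)-\widehat F(x^\infty)$ to R-linear convergence of the iterates via summability of $\|d^k\|$ is the standard and intended bookkeeping.
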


\subsubsection{Finite convergence of $\Alg$ for \eqref{Manpp} in the presence of weak sharp minima}

We will show that the $\widehat{F}$ in \eqref{F1} satisfies the KL property with exponent $0$ at any point in a set of weak sharp minima ${\cal X}$ of \eqref{Manpp} and use this to derive finite convergence of the $\{x^k\}$ generated by $\Alg$ when its limit lies in ${\cal X}$. To this end, we first show that $\widehat{F}$ in \eqref{F1} is prox-regular at any point in ${\rm dom}\,\partial \widehat F = \mathcal M$ (see \eqref{domains}) with any $ \bar v\in\partial \widehat{F}(\bar x) $.
\begin{proposition}\label{ProxF}
	The $\widehat{F}$ defined in \eqref{F1} is prox-regular at any $\bar x\in {\rm dom}\,\partial \widehat F$ for any $\bar v\in \partial \widehat{F}(\bar x)$.
\end{proposition}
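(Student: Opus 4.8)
The plan is to show that $\widehat F = \widehat f + \delta_{\cal M}$ inherits prox-regularity from its two summands via the known calculus for prox-regular functions. First I would observe that $\widehat f(x) = \|Y^Tx\|_1$ is a finite convex function on $\R^n$, hence it is prox-regular at every point for every subgradient (with $r=0$ in Definition~\ref{DefPror}), and moreover it is (globally Lipschitz, hence) locally lower semicontinuous and subdifferentially continuous. Next, I would handle $\delta_{\cal M}$: the sphere ${\cal M} = \{x:\ \|x\|^2 = 1\}$ is a $C^2$ (indeed $C^\infty$) smooth manifold, equivalently a set defined by the single constraint $g(x) := \|x\|^2 - 1 = 0$ whose gradient $\nabla g(x) = 2x$ is nonzero on ${\cal M}$, so ${\cal M}$ is a \emph{prox-regular set} (its indicator $\delta_{\cal M}$ is a prox-regular function) at each of its points; this is standard, e.g.\ from \cite[Example~13.30 and Exercise~13.31]{RoWe97}, since $C^2$ manifolds are prox-regular. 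The normal cone formula $N_{\cal M}(x) = \{\lambda x:\ \lambda\in\R\}$ from Lemma~\ref{deltaM}(i) will be used to identify subgradients.

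Then the core step is an \emph{additivity / sum rule for prox-regularity}: if $h_1$ is convex (or more generally prox-regular and subdifferentially continuous) and $h_2$ is prox-regular and subdifferentially continuous at $\bar x$, and a constraint qualification holds — here $\partial^\infty \widehat f(\bar x) \cap (-N_{\cal M}(\bar x)) = \{0\}$, which is immediate since $\widehat f$ is Lipschitz so $\partial^\infty\widehat f(\bar x)=\{0\}$ — then $h_1 + h_2$ is prox-regular at $\bar x$ for every $\bar v \in \partial h_1(\bar x) + \partial h_2(\bar x) = \partial(h_1+h_2)(\bar x)$. I would invoke the relevant result from the literature on prox-regular functions (e.g.\ Poliquin--Rockafellar, or \cite[Exercise~13.35]{RoWe97} on sums, together with \cite[Corollary~10.9]{RoWe97} for the subdifferential sum formula already recorded in Lemma~\ref{prop:partialF}). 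Combining: $\widehat F = \widehat f + \delta_{\cal M}$ is prox-regular at any $\bar x \in {\rm dom}\,\partial\widehat F = {\cal M}$ for any $\bar v\in\partial\widehat F(\bar x) = \partial\widehat f(\bar x) + N_{\cal M}(\bar x)$.

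The main obstacle I anticipate is making the sum rule fully rigorous: one must either cite a clean "sum of two prox-regular functions under a qualification condition is prox-regular" statement, or, if such a citation is not convenient, prove the defining inequality of Definition~\ref{DefPror} for $\widehat F$ directly. A direct proof would go: fix $\bar x\in{\cal M}$ and $\bar v = \bar\zeta + \bar\lambda\bar x$ with $\bar\zeta\in\partial\widehat f(\bar x)$; for $x$ near $\bar x$ on ${\cal M}$ and $v = \zeta + \lambda x \in\partial\widehat F(x)$ near $\bar v$, use convexity of $\widehat f$ to get $\widehat f(x') \ge \widehat f(x) + \langle\zeta, x'-x\rangle$ for all $x'$, then add the estimate $0 = \langle \lambda x, x' - x\rangle - \lambda\langle x, x'-x\rangle \ge \langle \lambda x, x'-x\rangle - \frac{|\lambda|}{2}\|x'-x\|^2$ valid for $x', x\in{\cal M}$ (since $\langle x, x'-x\rangle = -\tfrac12\|x'-x\|^2$ on the unit sphere), observing that $\lambda = \langle \bar v, \bar x\rangle + o(1)$ stays bounded near $\bar v$, so the curvature constant $r$ can be taken as, say, $2|\langle\bar v,\bar x\rangle| + 1$ on a small enough $\epsilon$-ball; one also checks subdifferential continuity of $\widehat F$ on ${\cal M}$ (continuity of $\widehat f$ plus the explicit description of $N_{\cal M}$) so that the condition $\widehat F(x) < \widehat F(\bar x) + \epsilon$ in Definition~\ref{DefPror} is handled. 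This self-contained route avoids delicate verification of hypotheses of a black-box sum theorem and exploits the very simple quadratic geometry of the sphere, so I would likely present it rather than the pure-citation argument.
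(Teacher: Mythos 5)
Your primary argument is essentially the paper's: decompose $\widehat F=\widehat f+\delta_{\cal M}$, get prox-regularity of $\widehat f$ from convexity (\cite[Example~13.30]{RoWe97}), get prox-regularity of $\delta_{\cal M}$ from the smooth structure of the sphere, note $\partial^\infty\widehat f(\bar x)=\{0\}$ from Lipschitz continuity, and invoke the Poliquin--Rockafellar sum rule; the paper uses exactly \cite[Theorem~3.2]{PoRo10} together with the qualification $\partial^\infty\widehat f(\bar x)=\{0\}$. The only cosmetic difference in this route is how prox-regularity of $\delta_{\cal M}$ is certified: you argue via ${\cal M}$ being a $C^2$ manifold (a prox-regular set), whereas the paper writes $\delta_{\cal M}=\delta_{\{1\}}\circ\|\cdot\|$ and passes through strong amenability (\cite[Example~10.24(b), Exercise~10.26(b), Proposition~13.32]{RoWe97}); both are standard and correct.

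Where you genuinely depart from the paper is the self-contained verification you say you would actually present. That argument is sound and arguably sharper: for $x,x'\in{\cal M}$ one has $\langle x,x'-x\rangle=-\tfrac12\|x'-x\|^2$, so writing $v=\zeta+\lambda x$ with $\zeta\in\partial\widehat f(x)$ and using convexity of $\widehat f$ gives
\begin{equation*}
\widehat F(x')\ \ge\ \widehat F(x)+\langle v,x'-x\rangle-\frac{|\lambda|}{2}\|x'-x\|^2\qquad\forall x'\in{\cal M},
\end{equation*}
with the inequality trivial for $x'\notin{\cal M}$; since $\|\zeta\|\le\sum_{i=1}^p\|{\bm y}_i\|$ and $\|x\|=1$, the multiplier satisfies $|\lambda|=\|v-\zeta\|\le\|\bar v\|+\epsilon+\sum_{i=1}^p\|{\bm y}_i\|$ uniformly for $v$ near $\bar v$, so $r$ can be fixed in advance. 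This yields the defining inequality of Definition~\ref{DefPror} for \emph{all} $x'$, not just those near $\bar x$, and it sidesteps the hypotheses of the black-box sum theorem (subdifferential continuity, the precise form of the qualification condition). What the citation route buys instead is brevity and robustness to changes in the constraint set; what your direct route buys is an explicit curvature constant and independence from the prox-regularity calculus. Either version is acceptable; no gap.
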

\begin{proof}
Recall from \eqref{domains} that ${\rm dom}\,\partial \widehat F = {\cal M}$. Noting that $\delta_\mathcal M$ is the composition of $\delta_{\{1\}}(\cdot)$ and the map $x\mapsto \|x\|$, and using  \cite[Example~10.24(b)]{RoWe97} and \cite[Exercise~10.26(b)]{RoWe97}, one can deduce that $\delta_\mathcal M$ is strongly amenable at any $\bar x\in \mathcal M$. We can then see that $\delta_{\mathcal M}$ is prox-regular at any $\bar x\in \mathcal M$ according to \cite[Proposition~13.32]{RoWe97}.
	
	Note that $\widehat{f}$ is prox-regular at any $\bar x$ for any $\bar v\in \partial \widehat{f}(\bar x)$ since $ \widehat{f}$ is convex (see \cite[Example~13.30]{RoWe97}). In addition, using \cite[Proposition 8.12]{RoWe97}, we have $\partial^\infty \widehat{f}(\bar x) = \{0\}$ at any $\bar x\in\mathcal M$. Combining these with the fact that $\delta_{\mathcal M}$ is prox-regular at any $\bar x\in \mathcal M$ and \cite[Theorem~3.2]{PoRo10}, we obtain the desired result.
\end{proof}

We next establish a {\em uniform} prox-regularity property of $\widehat{F}$. We first observe that for any $x\in {\rm dom}\,\partial \widehat F = \mathcal M$,
\[
\partial \widehat F(x)\supseteq \widehat\partial \widehat F(x) \supseteq \widehat\partial \widehat f(x) = Y\partial \|Y^Tx\|_1,
\]
where the second inclusion follows from \cite[Corollary~10.9]{RoWe97} and Lemma~\ref{deltaM}(i).
Since $\partial\|\cdot\|_1$ is contained in the unit infinity norm ball, we see that $Y\partial \|Y^Tx\|_1\subseteq B(0,\rho)$ whenever $\rho> \sum_{i=1}^p\|{\bm y}_i\|$, where ${\bm y}_i$ is the $i_{\rm th}$ column of $Y$; in particular, $\partial \widehat F(x)\cap B(0,\rho)\neq \emptyset$ for all such $\rho$. In the next theorem, we show that the constants involved in the definition of prox-regularity can be chosen uniformly locally around any $\bar x\in {\cal M}$ for any $v$ in the {\em nonempty} set $\partial \widehat F(x)\cap B(0,\rho)$.
\begin{theorem}[Uniform prox-regularity of $ \widehat F $]\label{VaPG}
	Let $\widehat{F}$ be defined in \eqref{F1} and $\bar x\in {\rm dom}\,\partial \widehat F$. Let ${\bm y}_i$ denote the $i_{\rm th}$ column of $Y$. Then there exist $\rho> \sum_{i=1}^p\|{\bm y}_i\|$, $\epsilon>0$ and $r\ge 0$ such that
	\[
	\widehat{F}(x')\ge \widehat{F}(x) + \langle v,x' - x\rangle - \frac{r}{2}\|x' - x\|^2
	\]
	whenever $x',\, x\in {\rm dom}\,\widehat F\cap B(\bar x,\epsilon)$ and $v\in\partial \widehat{F}(x)\cap B(0,\rho)$.
\end{theorem}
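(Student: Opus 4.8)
The plan is to reduce the estimate to the convexity of $\widehat f:=\|Y^T\cdot\|_1$ together with an explicit second-order identity for the sphere $\mathcal M$. The starting point is that, by Lemma~\ref{prop:partialF} and Lemma~\ref{deltaM}(i) with $\phi(s)=s$, we have ${\rm dom}\,\widehat F={\rm dom}\,\partial\widehat F=\mathcal M$ and $\partial\widehat F(x)=Y\partial\|Y^Tx\|_1+\{\lambda x:\lambda\in\R\}$, while Lemma~\ref{flip} gives $Y\partial\|Y^Tx\|_1=\partial\widehat f(x)$. In particular, on its domain $\widehat F$ coincides with the convex function $\widehat f$, and the curvature present in $\widehat F$ but not in $\widehat f$ enters only through the normal cone $N_{\mathcal M}(x)=\{\lambda x:\lambda\in\R\}$; since $\mathcal M$ is a sphere, this contribution is an exact quadratic.

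Concretely, I would fix any $\rho>\sum_{i=1}^p\|{\bm y}_i\|$, set $r:=\rho+\sum_{i=1}^p\|{\bm y}_i\|$, and take $\epsilon>0$ arbitrary (the resulting estimate is in fact global on $\mathcal M$). Given $x',x\in{\rm dom}\,\widehat F\cap B(\bar x,\epsilon)=\mathcal M\cap B(\bar x,\epsilon)$ and $v\in\partial\widehat F(x)\cap B(0,\rho)$, I would decompose $v=w+\lambda x$ with $w\in\partial\widehat f(x)$ and $\lambda\in\R$. Because $\partial\|\cdot\|_1$ lies in the unit $\ell_\infty$-ball, every such $w$ satisfies $\|w\|\le\sum_{i=1}^p\|{\bm y}_i\|$ (as already noted in the paragraph preceding the theorem), and therefore $|\lambda|=\|v-w\|\le\|v\|+\|w\|\le\rho+\sum_{i=1}^p\|{\bm y}_i\|=r$. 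This uniform bound on the normal-cone multiplier $\lambda$ — holding for every $\bar x$, every admissible $x$, and every admissible $v$ — is the one point that requires attention, and it is exactly the boundedness of $\partial\|\cdot\|_1$ that makes it possible.

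Finally I would compute directly. Since $x',x\in\mathcal M$, the indicator terms drop out and
\[
\widehat F(x')-\widehat F(x)-\langle v,x'-x\rangle=\big[\widehat f(x')-\widehat f(x)-\langle w,x'-x\rangle\big]-\lambda\,\langle x,x'-x\rangle .
\]
The bracketed term is nonnegative by convexity of $\widehat f$ and $w\in\partial\widehat f(x)$. For the second term, the identity $\|x'-x\|^2=2-2\langle x,x'\rangle$, valid for $x,x'\in\mathcal M$, gives $\langle x,x'-x\rangle=-\tfrac12\|x'-x\|^2$, so $-\lambda\langle x,x'-x\rangle=\tfrac{\lambda}2\|x'-x\|^2\ge-\tfrac{r}2\|x'-x\|^2$. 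Adding these two bounds yields $\widehat F(x')-\widehat F(x)-\langle v,x'-x\rangle\ge-\tfrac r2\|x'-x\|^2$, which is the claim. I do not expect a genuine obstacle here: the argument is short because the sphere contributes only the exact quadratic $-\tfrac12\|x'-x\|^2$ through its normal cone, and the sole care needed is to ensure — as above — that $\rho$, $r$ (and even $\epsilon$) can be chosen independently of $\bar x$.
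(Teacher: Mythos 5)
Your proof is correct, but it takes a genuinely different and more elementary route than the paper. The paper first establishes pointwise prox-regularity of $\widehat F$ (Proposition~\ref{ProxF}, via strong amenability of $\delta_{\mathcal M}$ and the prox-regularity calculus of Poliquin--Rockafellar), and then upgrades it to the uniform statement by a compactness argument: it covers the compact set $\partial\widehat F(\bar x)\cap B(0,\eta)$ by finitely many balls on which the pointwise estimate holds, and uses outer semicontinuity of $\partial\widehat F$ to show that $\partial\widehat F(x)\cap B(0,\rho)$ stays inside this finite cover for $x$ near $\bar x$, finally taking $r$ to be the maximum of finitely many constants. You instead exploit the specific structure of $\widehat F=\widehat f+\delta_{\mathcal M}$: the decomposition $v=w+\lambda x$ via \eqref{partialF}, the uniform bound $\|w\|\le\sum_{i=1}^p\|{\bm y}_i\|$ (hence $|\lambda|\le\rho+\sum_{i=1}^p\|{\bm y}_i\|$ since $\|x\|=1$), convexity of $\widehat f$ for the first term, and the exact identity $\langle x,x'-x\rangle=-\tfrac12\|x'-x\|^2$ on the sphere for the second. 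Every step checks out, and your argument buys more: the constants $\rho$ and $r$ are explicit and independent of $\bar x$, the estimate is global on $\mathcal M$ (any $\epsilon$ works), and Proposition~\ref{ProxF} together with its variational-analysis machinery becomes unnecessary for this theorem. The paper's approach buys generality --- it would survive replacing $\widehat f$ by a nonconvex prox-regular function or $\mathcal M$ by a manifold without an exact quadratic normal-cone identity --- but for the problem at hand your direct computation is shorter and sharper.
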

\begin{proof}
	Let $\eta>0$ be such that  $\eta>\sum_{i=1}^p\|{\bm y}_i\|$. Then $\partial \widehat{F}(\bar x)\cap B(0,\eta)\neq \emptyset$ according to the discussion preceding this theorem. For any $\bar v\in\partial \widehat{F}(\bar x)\cap B(0,\eta)$, since  $\widehat{F}$ is prox-regular at $\bar x $ relative to $\bar v$ thanks to Proposition~\ref{ProxF}, there exist $\epsilon_{\bar v}>0$ and $r_{\bar v}\ge 0$ such that
	\begin{align}\label{PR0}
	\widehat{F}(x')\ge \widehat{F}(x) + \langle v,x' - x\rangle - \frac{r_{\bar v}}{2}\|x' - x\|^2
	\end{align}
	whenever $\|x' - \bar x\|<\epsilon_{\bar v}$, $\|x - \bar x\|<\epsilon_{\bar v}$, $\widehat{F}(x) < \widehat{F}(\bar x) + \epsilon_{\bar v}$, and $\|v - \bar v\|<\epsilon_{\bar v}$ with $v\in\partial \widehat{F}(x)$. Since $\widehat{F}$ is continuous on its domain, by shrinking $\epsilon_{\bar v}$ if necessary, we have \eqref{PR0} holds
	whenever $x',\, x\in {\rm dom}\,\widehat F\cap B(\bar x,\epsilon_{\bar v})$ and $v\in \partial \widehat{F}(x)\cap B(\bar v,\epsilon_{\bar v})$.
	
	Since $\partial \widehat{F}(\bar x)\cap B(0,\eta)$ is compact and covered by $ \bigcup_{\bar v\in\partial \widehat{F}(\bar x)\cap B(0,\eta)} {\rm int}(B(\bar v,\epsilon_{\bar v}))$, there exist $\{B(\bar v_i,\epsilon_{\bar v_i})\}_{i=1}^k$ with $\{\bar v_1,\dots,\bar v_k\}\subseteq \partial \widehat{F}(\bar x) \cap B(0,\eta)$ and $0<\epsilon_1<\min\{\epsilon_{\bar v_1},\dots,\epsilon_{\bar v_k},\eta - \sum_{i=1}^p\|{\bm y}_i\|\}$ such that
	\begin{align}\label{sub1}
	\partial \widehat{F}(\bar x)\cap B(0,\eta) + B(0,\epsilon_1)\subseteq \bigcup_{i=1}^k {\rm int}(B(\bar v_i,\epsilon_{\bar v_i}))
\subseteq \bigcup_{i=1}^k B(\bar v_i,\epsilon_{\bar v_i}).
	\end{align}
	
	Since $\widehat{F}$ is finite at $\bar x$, using \cite[Proposition~8.7]{RoWe97}, we have that $\partial \widehat{F}$ is outer semi-continuous (osc)  at $\bar x$ with respect to $x\stackrel{\widehat{F}}{\to}\bar x$ and thus osc at $\bar x$ relative to ${\rm dom}\, \widehat{F}$ (since $\widehat F$ is continuous on its domain). This together with the definition of osc implies that $x\mapsto \partial \widehat{F}(x)\cap B(0,\eta)$ is osc at $\bar x$ with respect to ${\rm dom}\, \widehat{F}$. Thus, using \cite[Proposition~5.12]{RoWe97}, for  $\rho:=\eta - \epsilon_1>\sum_{i=1}^p\|{\bm y}_i\|$ and $\epsilon_1$, there exists $0<\epsilon<\epsilon_1$ such that  for any $x\in B(\bar x,\epsilon)\cap{\rm dom}\,\widehat{F}$, it holds that
	\[
	\partial \widehat{F}(x)\cap  B(0,\rho)\subseteq \partial \widehat{F}(\bar x)\cap B(0,\eta) +  B(0,\epsilon_1)\subseteq \bigcup_{i=1}^k B(\bar v_i,\epsilon_{\bar v_i}),
	\]
	where the second inclusion makes use of \eqref{sub1}. Therefore, fixing any $x\in B(\bar x,\epsilon)\cap{\rm dom}\,\widehat{F}$ and $\widetilde v\in  \partial \widehat{F}(x)\cap  B(0,\rho)$, there exists $i_{\widetilde v}\in\{1,\dots,k\}$ such that $\widetilde v\in B(\bar v_{i_{\widetilde v}},\epsilon_{\bar v_{i_{\widetilde v}}})$. Using \eqref{PR0} with $\bar v = \bar v_{i_{\widetilde v}}$, we deduce that
	\begin{align*}
	\widehat{F}(x')\ge \widehat{F}(x) + \langle \widetilde v,x' - x\rangle - \frac{r_{v_{i_{\widetilde v}}}}{2}\|x' - x\|^2\ge \widehat{F}(x) + \langle \widetilde v,x' - x\rangle - \frac{r}{2}\|x' - x\|^2
	\end{align*}
	whenever $x'\in B(\bar x,\epsilon)\cap{\rm dom}\,\widehat{F}$, where  $r := \max\{r_{\bar v_1},\dots,r_{\bar v_k}\}$.  Since $x$ and $\widetilde v$ are arbitrarily chosen from $B(\bar x,\epsilon)\cap{\rm dom}\,\widehat{F}$ and $  \partial \widehat{F}(x)\cap B(0,\rho)$, the proof is complete in view of the above display.
\end{proof}

Now we are ready to prove that $\widehat{F}$ satisfies the KL property with exponent $0$ at any point in a set of weak sharp minima of $\widehat{F}$.
\begin{theorem}[Exponent 0 in weak sharp minima]\label{KL0}
	Let  $\widehat{F}$ be defined in \eqref{F1}. Suppose there exists a set $\mathcal{X}\subseteq {\cal M}$ of weak sharp minima for $\widehat{F}$ with parameter $(\alpha,\delta)$ for some $\alpha>0$ and $\delta>0$. Then $\widehat{F}$ satisfies the KL property  with exponent $0$ at any point in  $\mathcal{X}$.
\end{theorem}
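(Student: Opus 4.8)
The plan is to verify the Kurdyka--{\L}ojasiewicz inequality with exponent $0$ at each fixed $\bar x\in\mathcal X$ by unwinding the definition: with exponent $\alpha=0$ one may take $\varphi(\nu)=\nu/c$, so $\varphi'\equiv 1/c$ and what must be shown is the existence of a neighborhood $V$ of $\bar x$ and a constant $c>0$ with $\mathrm{dist}(0,\partial\widehat F(x))\ge c$ for every $x\in V$ satisfying $\widehat F(\bar x)<\widehat F(x)<\widehat F(\bar x)+a$ (for any fixed $a>0$, say $a=1$); points $x\notin\mathcal M$ are automatically irrelevant since $\widehat F\equiv+\infty$ there. Fix $\bar x\in\mathcal X$. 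Since $\widehat F$ is continuous on its domain $\mathcal M$ and $\mathcal M$ is closed, passing to the closure of $\mathcal X$ if necessary we may assume $\mathcal X$ is closed, so that a metric projection $P_{\mathcal X}(x)$ onto $\mathcal X$ exists for every $x$; $\widehat F$ is still constant on $\mathcal X$ and the weak sharp minima inequality still holds with the same $(\alpha,\delta)$.

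Now fix $x\in\mathcal M$ near $\bar x$ with $\widehat F(x)>\widehat F(\bar x)$, and let $v^*$ be the least-norm element of the nonempty closed convex set $\partial\widehat F(x)$, so that $\mathrm{dist}(0,\partial\widehat F(x))=\|v^*\|$. Since $\partial\widehat F(x)\supseteq Y\partial\|Y^Tx\|_1$ and the latter set lies in $B(0,\sum_i\|{\bm y}_i\|)$, we get $\|v^*\|\le\sum_i\|{\bm y}_i\|<\rho$, where $\rho>\sum_i\|{\bm y}_i\|$, $\epsilon>0$ and $r\ge0$ are the constants furnished by Theorem~\ref{VaPG} at $\bar x$; in particular $v^*\in\partial\widehat F(x)\cap B(0,\rho)$. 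Writing $\bar x':=P_{\mathcal X}(x)$, we have $\|\bar x'-x\|=\mathrm{dist}(x,\mathcal X)\le\|x-\bar x\|$ because $\bar x\in\mathcal X$, whence $\|\bar x'-\bar x\|\le 2\|x-\bar x\|<\epsilon$ once $\|x-\bar x\|<\epsilon/2$; thus $x,\bar x'\in\mathrm{dom}\,\widehat F\cap B(\bar x,\epsilon)$, and Theorem~\ref{VaPG} applies with $x'=\bar x'$ and $v=v^*$. Using $\widehat F(\bar x')=\widehat F(\bar x)$ and the Cauchy--Schwarz inequality, this gives
\[
\widehat F(x)-\widehat F(\bar x)\le\|v^*\|\,\mathrm{dist}(x,\mathcal X)+\tfrac r2\,\mathrm{dist}(x,\mathcal X)^2 .
\]
On the other hand, once $\|x-\bar x\|\le\delta$ we have $x\in\mathfrak B(\delta)$, so the weak sharp minima hypothesis yields $\widehat F(x)-\widehat F(\bar x)\ge\alpha\,\mathrm{dist}(x,\mathcal X)$; moreover $\mathrm{dist}(x,\mathcal X)>0$, since $\widehat F(x)>\widehat F(\bar x)$ while $\widehat F$ equals $\widehat F(\bar x)$ on the closed set $\mathcal X$. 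Dividing the two estimates by $\mathrm{dist}(x,\mathcal X)$ gives $\alpha\le\|v^*\|+\tfrac r2\,\mathrm{dist}(x,\mathcal X)\le\|v^*\|+\tfrac r2\|x-\bar x\|$, so after further shrinking the neighborhood to $\|x-\bar x\|<\alpha/r$ when $r>0$ (no restriction is needed when $r=0$) we conclude $\|v^*\|\ge\alpha/2$. Taking $V=\{x:\ \|x-\bar x\|<\min\{\epsilon/2,\delta,\alpha/r\}\}$, $a=1$, and $c=\alpha/2$ then establishes the KL property with exponent $0$ at $\bar x$.

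The main point to get right, rather than any isolated computation, is the use of the \emph{uniform} prox-regularity of Theorem~\ref{VaPG}: the comparison subgradient $v^*$ depends on the running point $x$, so a pointwise application of Proposition~\ref{ProxF} at $\bar x$ for a single $\bar v$ would not suffice, and one genuinely needs a single pair $(\epsilon,r)$ valid for all $v\in\partial\widehat F(x)\cap B(0,\rho)$ with $x$ near $\bar x$ --- which is why it is useful to have noted beforehand that $\mathrm{dist}(0,\partial\widehat F(x))\le\sum_i\|{\bm y}_i\|<\rho$, placing $v^*$ inside the admissible ball. The remaining points are routine: ensuring the comparison point $\bar x'=P_{\mathcal X}(x)$ stays in $B(\bar x,\epsilon)$, and verifying $\mathrm{dist}(x,\mathcal X)>0$ so that the final division is legitimate.
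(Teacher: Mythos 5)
Your proposal is correct and follows essentially the same route as the paper's proof: both reduce the claim to a uniform lower bound on ${\rm dist}(0,\partial\widehat F(x))$, apply the uniform prox-regularity of Theorem~\ref{VaPG} at the projection $P_{\mathcal X}(x)$ with a bounded subgradient (noting that $\partial\widehat F(x)$ always meets $B(0,\rho)$ via $Y\partial\|Y^Tx\|_1$), and combine with the weak sharp minima inequality before dividing by ${\rm dist}(x,\mathcal X)>0$. The only cosmetic difference is that you work with the least-norm element $v^*$ directly rather than taking an infimum over $v\in\partial\widehat F(x)\cap B(0,\rho)$ at the end, which yields the same constant up to the choice of shrinking radius.
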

\begin{proof}
Since $\widehat F$ is continuous on its domain, the closure of a set of weak sharp minima is also a set of weak sharp minima with the same parameter $(\alpha,\delta)$. Thus, by replacing ${\cal X}$ with its closure, we assume without loss of generality that ${\cal X}$ is closed. Moreover, we have ${\cal X}\subseteq {\rm dom}\,\partial\widehat F$ thanks to \eqref{domains}.

	Fix any $\bar x\in \mathcal{X}$.
	Using Proposition \ref{VaPG}, there exist $\rho>\sum_{i=1}^p\|{\bm y}_i\|$, $r\ge 0$ and $\epsilon\in (0,2 \delta)$ such that
	\begin{align}\label{UniPR}
	\widehat{F}(x')\ge \widehat{F}(x) + \langle v,x' - x\rangle - \frac{r}{2}\|x' - x\|^2
	\end{align}
	whenever $\|x' - \bar x\|\le\epsilon$ and $\|x - \bar x\|\le \epsilon$ with $\{x',x\}\subseteq \mathcal M$, and $v\in\partial \widehat{F}(x)\cap B(0,\rho)$.

Next, for any $x\in B(\bar x,\epsilon/2)$, let ${\rm Proj}_{\cal X}(x)$ be the collection of points in ${\cal X}$ that are closest to $x$: this set is nonempty thanks to the closedness of ${\cal X}$. Let $\bar x^* \in {\rm Proj}_{\cal X}(x)$. Then it holds that
\begin{equation}\label{distproj}
	\|\bar x^* - \bar x\|\le \|\bar x^* - x\| + \|x - \bar x\| = {\rm dist}(x,\mathcal{X}) + \|x - \bar x\|\le  2 \|x - \bar x\| \le  \epsilon.
	\end{equation}

Now, since $\epsilon< 2\delta$,  we have that $x\in \mathfrak{B}(\delta)$  whenever  $\|x - \bar x\|\le \epsilon/2$, where $\mathfrak{B}(\cdot)$ is defined in Definition \ref{sharpdef}. Thus, the weak sharpness of   $\mathcal{X}$ gives
	\begin{align}\label{sharp}
	\alpha\, {\rm dist}(x,\mathcal{X})\le  \widehat{F}(x) - \widehat{F}(\bar x^*)\ \ \ \ \forall  x \in B(\bar x,\epsilon/2).
	\end{align}
We can then deduce that for any $x\in\mathcal M\cap B(\bar x,\epsilon/2)$ and $v\in\partial \widehat{F}(x)\cap B(0,\rho)$, we have
	\begin{align*}
	\alpha\, {\rm dist}(x,\mathcal{X})\le  \widehat{F}(x) - \widehat{F}(\bar x^*)\le - \langle v,\bar x^* - x\rangle  + \frac{r}{2}\|\bar x^* - x\|^2\le \|v\|\|\bar x^* - x\| + \frac{r}{2}\|\bar x^* - x\|^2,
	\end{align*}
where the first inequality follows from \eqref{sharp}, and the second inequality follows from \eqref{UniPR} and \eqref{distproj}.
	Taking infimum over $v\in\partial \widehat{F}(x)\cap B(0,\rho)$, the above inequality gives that
	\begin{align}\label{dis}
	\alpha\, {\rm dist}(x,\mathcal{X})\!\le\!  {\rm dist}(0,\partial \widehat{F}(x)\cap B(0,\rho) )\|\bar x^* - x\| \!+\! \frac{r}{2}\|\bar x^* - x\|^2\ \ \ \ \forall x\in\mathcal M\cap B(\bar x,\epsilon/2).\!\!
	\end{align}
	
	On the other hand, invoking \eqref{partialF} (and recalling that $\phi(s)=s$), we see that
	\[
	Y u \in \partial \widehat{F}(x)\ \ \ \ \ \ \forall x\in\mathcal{M}{\rm \ and \ }u\in \partial\|\cdot\|_1(Y^Tx).
	\]
	Since $\partial\|\cdot\|_1$ is contained in the unit infinity norm ball, we have that $\|Yu\|\le \sum_{i=1}^p\|{\bm y}_i\|<\rho$, which  implies  that
	\[
	{\rm dist} (0, \partial \widehat{F}(x)) = {\rm dist} (0,\partial \widehat{F}(x)\cap B(0,\rho))\ \ \ \ \ \  \forall x\in\mathcal{M}.
	\]
	Combining this  with \eqref{dis}, we have  that
	\begin{equation*}
	\alpha\, {\rm dist}(x,\mathcal{X})\le  {\rm dist}(0,\partial \widehat{F}(x))\|\bar x^* - x\|  + \frac{r}{2}\|\bar x^* - x\|^2 = {\rm dist}(0,\partial \widehat{F}(x)) {\rm dist}(x,\mathcal{X}) + \frac{r}{2} {\rm dist}(x,\mathcal{X})^2,
	\end{equation*}
	for any $x\in\mathcal{M}\cap B(\bar x,\epsilon/2)$, where the equality follows from the definition of $\bar x^*$. Rearranging terms in the above inequality, we deduce further that
	\[
	\left(\alpha - \frac{r}{2}{\rm dist}(x,\mathcal{X})\right){\rm dist}(x,\mathcal{X})\le  {\rm dist}(0,\partial \widehat{F}(x)){\rm dist}(x,\mathcal{X})\ \ \ \ \forall x\in\mathcal{M}\cap B(\bar x,\epsilon/2).
	\]
	Let $\widetilde \delta \in(0, {2\alpha}/{r}) $ and $\widetilde \epsilon\in(0,\min\{\widetilde \delta,\epsilon/2\})$.  Then  for $x\in B(\bar x,\widetilde\epsilon)$, we have  ${\rm dist}(x,\mathcal{X})\le \|x - \bar x\| \le\widetilde \epsilon  <\widetilde \delta$  and  the above inequality further implies that
	\[
	\left(\alpha - \frac{r}{2}\widetilde \delta\right){\rm dist}(x,\mathcal{X})\le {\rm dist}(0,\partial \widehat{F}(x)){\rm dist}(x,\mathcal{X})
\ \ \ \ \ \ \ \forall x\in\mathcal{M}\cap B(\bar x,\widetilde \epsilon).
	\]
	Note that when $\widehat{F}(\bar x)<\widehat{F}(x)<\widehat{F}(\bar x) + 1$, we have $x\in\mathcal{M}\setminus \mathcal{X}$ and thus ${\rm dist}(x,\mathcal{X})>0$. Thus,  the above relation gives
	\[
	\alpha - \frac{r}{2}\widetilde \delta\le {\rm dist}(0,\partial \widehat{F}(x))
	\]
	whenever  $\|x - \bar x\|<\widetilde \epsilon$ and $\widehat{F}(\bar x)<\widehat{F}(x)<\widehat{F}(\bar x) + 1$. Since $\alpha - {r\widetilde \delta}/{2}>0$ thanks to the definition of $\widetilde \delta$, we conclude that $\widehat{F}$ satisfies the KL property with exponent $0$ at $\bar x$.
\end{proof}

Based on the KL exponent derived in Theorem~\ref{KL0}, we now show that if the sequence $\{x^k\}$ generated by $\Alg$ for solving \eqref{Manpp} has $\lim_{k\to\infty}x^k$ (which exists thanks to Theorem~\ref{thm1} and the fact that $F$ is semialgebraic, and hence a KL function, when $\phi(s)=s$) lying in a set of weak sharp minima, then $\{x^k\}$ converges finitely.
\begin{theorem}[Finite convergence]\label{convergefinite}
	Let $\mathcal{X}\subseteq {\cal M}$ be a set of weak sharp minima for \eqref{Manpp} with parameters $(\alpha,\delta)$.  Let $\{x^k\}$ be the sequence generated by $\Alg$ with $\phi(s)=s$. Suppose that $\lim_{k\to\infty}x^k=:x^*\in \mathcal{X}$.\footnote{The limit exists thanks to Theorem~\ref{thm1} and the fact that $F$ is semialgebraic (and hence a KL function) when $\phi(s)=s$.} Then $\{x^k\}$ converges finitely.
\end{theorem}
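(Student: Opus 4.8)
The plan is to run the standard Kurdyka--{\L}ojasiewicz finite-termination argument, powered by three ingredients already available: the KL property of $\widehat F$ with exponent $0$ at $x^*$ (Theorem~\ref{KL0}, applicable since $x^*\in{\cal X}$), the sufficient-decrease inequality of Theorem~\ref{sta}(i), and the subgradient bound of Lemma~\ref{distbound}. Note that when $\phi(s)=s$ we have $\phi_+'\equiv 1$, which is (globally) Lipschitz, so $\widehat F=F$ in \eqref{F1} coincides with the $F$ in \eqref{F}, Lemma~\ref{distbound} applies verbatim, and ${\rm dom}\,\partial\widehat F={\cal M}$ by \eqref{domains}.

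First I would record some elementary consequences. By Theorem~\ref{sta}(i) and $t\in(0,2/L_f)$, the sequence $\{\widehat F(x^k)\}$ is nonincreasing; since $\widehat F$ is continuous on ${\cal M}$ and $x^k\to x^*\in{\cal M}$, it follows that $\widehat F(x^k)\downarrow \widehat F(x^*)$, and in particular $\widehat F(x^k)\ge \widehat F(x^*)$ for all $k$. Moreover, since the decrease constant $\tfrac1t-\tfrac{L_f}{2}$ is strictly positive, whenever $\widehat F(x^k)=\widehat F(x^{k+1})$ we must have $d^k=0$, and then $x^{k+1}=x^k/\|x^k\|=x^k$ by \eqref{xkplus1} and $x^k\in{\cal M}$. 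Next, unwinding Theorem~\ref{KL0} with the admissible choice $\varphi(\nu)=a_0\nu$ in \eqref{phichoice}, there exist a neighborhood $V$ of $x^*$, a scalar $a>0$, and a constant $\gamma>0$ such that ${\rm dist}(0,\partial\widehat F(x))\ge \gamma$ for every $x\in V$ with $\widehat F(x^*)<\widehat F(x)<\widehat F(x^*)+a$ (recall $x^k\in{\cal M}={\rm dom}\,\partial\widehat F$, so there is no issue with the domain). Using $x^k\to x^*$, $\widehat F(x^k)\downarrow\widehat F(x^*)$, and $\|d^k\|\to 0$ from Theorem~\ref{sta}(ii) together with Lemma~\ref{distbound}, choose $K$ so large that for all $k\ge K$ one has $x^{k+1}\in V$, $\widehat F(x^{k+1})<\widehat F(x^*)+a$, and ${\rm dist}(0,\partial\widehat F(x^{k+1}))\le (C+\tfrac1t)\|d^k\|<\gamma$.

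The crux is then a dichotomy applied for each $k\ge K$: if $\widehat F(x^{k+1})>\widehat F(x^*)$, the KL inequality forces ${\rm dist}(0,\partial\widehat F(x^{k+1}))\ge\gamma$, contradicting the strict bound just obtained; hence $\widehat F(x^{k+1})=\widehat F(x^*)$ for all $k\ge K$. Feeding this back into the elementary observation above, $d^k=0$ and $x^{k+1}=x^k$ for all $k\ge K+1$, so $x^k\equiv x^{K+1}$ for $k\ge K+1$; combined with $x^k\to x^*$ this gives $x^{K+1}=x^*$, i.e.\ $x^k=x^*$ for all $k\ge K+1$, which is the asserted finite convergence. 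I do not expect a genuine obstacle here: the only points needing a line of care are verifying that the hypothesis of Lemma~\ref{distbound} holds (which it does because $\phi_+'\equiv 1$ is Lipschitz when $\phi(s)=s$) and correctly handling the branch $\widehat F(x^{k+1})=\widehat F(x^*)$ of the dichotomy — this branch, far from being problematic, is precisely what yields termination once the ``$>$'' branch has been excluded via the vanishing subgradient bound.
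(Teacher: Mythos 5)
Your proposal is correct and follows essentially the same route as the paper: it uses the KL exponent $0$ at $x^*$ from Theorem~\ref{KL0}, the sufficient decrease of Theorem~\ref{sta}(i), and the subgradient bound of Lemma~\ref{distbound} to show that $\widehat F(x^k)$ cannot stay strictly above its limit (the paper phrases this as a single contradiction using $\|d^k\|\to 0$, you as a per-iteration dichotomy, but the mechanism is identical), and then concludes $d^k=0$ eventually. Your extra checks (that $\phi_+'\equiv 1$ is Lipschitz so Lemma~\ref{distbound} applies, and that $d^k=0$ forces $x^{k+1}=x^k$) are exactly the points the paper also relies on.
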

\begin{proof}
Recall that $\{\widehat F(x^k)\}$ is nonincreasing (see Theorem~\ref{sta}(i)) and bounded below (by zero). Hence, $\widehat{F}^*:= \lim_{k\to\infty}\widehat F(x^k)$ exists.
	We first claim that there exists $k_0\ge 0$ such that $\widehat{F}(x^{k_0}) = \widehat F^*$. Suppose to the contrary that $\widehat{F}(x^{k})> \widehat{F}^*$ for all $k\ge 0$. Since $\widehat{F}$ satisfies the KL property with exponent $0$ at $x^*$ thanks to Theorem \ref{KL0}, there exist $c>0$ and $a>0$ such that
	\[
	{\rm dist}(0,\partial \widehat{F}(x))\ge c,
	\]
	for $x\in B(x^*,a)\cap \{x:\widehat{F}(x^*)<\widehat{F}(x)<\widehat{F}(x^*) + a\}$. Since $\{x^k\}$ is convergent to $x^*$ and $\{\widehat{F}(x^{k})\}$ is continuous on $\mathcal{M}$, there exists an integer $N$ such that for all $k\ge N$, we have
	\[
	c\le {\rm dist}(0,\partial \widehat{F}(x^{k+1}))\le \left(C +\frac1t\right)\|d^k\|,
	\]
	where the second inequality follows from Lemma \ref{distbound}. Passing to the limit as $k$ goes to infinity in the above display and recalling Theorem \ref{sta}(ii), we have that $c\le 0$, a contradiction. Thus,  there exists $k_0\ge 0$ such that $\widehat{F}(x^{k_0}) = \widehat{F}^*= \lim_{k\to\infty}\widehat F(x^k)$. This together with Theorem~\ref{sta}(i) shows that $d^{k}\equiv 0 $
	for $k\ge k_0$ and thus $x^{k+1}\equiv x^k$ for $k\ge k_0$, i.e., $\{x^k\}$ converges finitely.
\end{proof}

\subsection{Relating KL exponents of $F$ in \eqref{F} and $\widetilde F$ in \eqref{Ftilde}}
In this subsection, we are interested in the connections between the KL exponents of $F$ in \eqref{F} and that of $\widetilde F$ in \eqref{Ftilde}, which is the potential function for establishing the global convergence of $\Alg$ for general $\phi$. We first present a theorem that deduces the KL exponent of $F$ from the KL exponent of $\widetilde F$. The proof of this theorem is similar to that of \cite[Theorem~5.1]{YuPongLi22}.
\begin{theorem}\label{KLrule1}
	Let $ F $ and $ \widetilde F $ be defined in \eqref{F} and \eqref{Ftilde}, respectively. Suppose that $ \widetilde F $ is a KL function with exponent $ \alpha\in [0,1) $. Then $ F $ is a KL function with exponent $ \alpha $.
\end{theorem}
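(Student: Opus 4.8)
The plan is to transfer the KL exponent from the lifted potential $\widetilde F$ to its marginal $F$, along the lines of \cite[Theorem~5.1]{YuPongLi22}, using the two ingredients already available: the value identity behind \eqref{infwF} and the subdifferential formulas in Lemma~\ref{prop:partialF} and Proposition~\ref{subdifftildeF}. Fix an arbitrary $\bar x\in{\rm dom}\,\partial F=\mathcal M$, define the map $u(\cdot)$ by $u(x):=\Phi'_+(|Y^Tx|)$, and set $\bar u:=u(\bar x)$. Since $\phi$ is concave and differentiable on $(0,\infty)$ with $\phi'$ extending continuously to $0$ (with value $\ell$), the map $\phi'_+$ is continuous on $\R_+$, so $u(\cdot)$ is continuous with $u(\bar x)=\bar u$. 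Moreover $-u(x)=\Psi'(|Y^Tx|)$, so by the equality case of the Fenchel--Young inequality (cf. \eqref{subdifferentialhaha} and the proof of Lemma~\ref{boundedrFtilde}) we have $\Psi^*(-u(x))=-\langle u(x),|Y^Tx|\rangle-\Psi(|Y^Tx|)$, whence
\[
\widetilde F(x,u(x))=-\Psi(|Y^Tx|)+\delta_{\mathcal M}(x)=\Phi(|Y^Tx|)+\delta_{\mathcal M}(x)=F(x)\qquad\forall x;
\]
in particular $\widetilde F(\bar x,\bar u)=F(\bar x)$, consistently with \eqref{infwF}, which says that $u(x)$ minimizes $\widetilde F(x,\cdot)$.

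Next I would compare least-norm subgradients along the section $x\mapsto(x,u(x))$. Because $-u(x)=\Psi'(|Y^Tx|)$, conjugacy gives $|Y^Tx|\in\partial\Psi^*(-u(x))$, so the second block of $\partial\widetilde F(x,u(x))$ in Proposition~\ref{subdifftildeF} contains $0$; and since $\partial\delta_{\mathcal M}(x)=N_{\mathcal M}(x)$, its first block is exactly $Y(u(x)\circ\partial\|Y^Tx\|_1)+N_{\mathcal M}(x)=\partial F(x)$ by Lemma~\ref{prop:partialF}. Hence $(\xi,0)\in\partial\widetilde F(x,u(x))$ for every $\xi\in\partial F(x)$, which yields
\[
{\rm dist}\bigl(0,\partial\widetilde F(x,u(x))\bigr)\le{\rm dist}\bigl(0,\partial F(x)\bigr)\qquad\forall x\in\mathcal M,
\]
and, taking any $\xi\in\partial F(\bar x)\neq\emptyset$, also shows $(\bar x,\bar u)\in{\rm dom}\,\partial\widetilde F$. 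Since $\widetilde F$ is assumed to be a KL function with exponent $\alpha$, the KL inequality is therefore available at $(\bar x,\bar u)$: there exist $a>0$, a neighbourhood $V$ of $(\bar x,\bar u)$ and $\varphi(\nu)=a_0\nu^{1-\alpha}$ such that $\varphi'\bigl(\widetilde F(x,u)-\widetilde F(\bar x,\bar u)\bigr)\,{\rm dist}(0,\partial\widetilde F(x,u))\ge1$ whenever $(x,u)\in V$ and $\widetilde F(\bar x,\bar u)<\widetilde F(x,u)<\widetilde F(\bar x,\bar u)+a$.

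Finally, by continuity of $u(\cdot)$ at $\bar x$ there is a neighbourhood $W$ of $\bar x$ with $(x,u(x))\in V$ for all $x\in W$. For any such $x$ with $F(\bar x)<F(x)<F(\bar x)+a$, substituting $u=u(x)$ and using $\widetilde F(x,u(x))=F(x)$, $\widetilde F(\bar x,\bar u)=F(\bar x)$ together with the distance comparison above, we obtain
\[
\varphi'\bigl(F(x)-F(\bar x)\bigr)\,{\rm dist}\bigl(0,\partial F(x)\bigr)\ \ge\ \varphi'\bigl(\widetilde F(x,u(x))-\widetilde F(\bar x,\bar u)\bigr)\,{\rm dist}\bigl(0,\partial\widetilde F(x,u(x))\bigr)\ \ge\ 1.
\]
This is precisely the KL inequality for $F$ at $\bar x$ with the same $\varphi$, hence with exponent $\alpha$; since $\bar x\in{\rm dom}\,\partial F$ was arbitrary, $F$ is a KL function with exponent $\alpha$.

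I do not expect a genuine obstacle here: once the value identity and the block structure of $\partial\widetilde F$ are in place, the argument is essentially bookkeeping. The points needing care are (a) verifying that $u(\cdot)$ is well defined and continuous and that $\bar u=u(\bar x)$ attains $\widetilde F(\bar x,\bar u)=F(\bar x)$, so that both the neighbourhood and the exponent transfer cleanly through the section $x\mapsto(x,u(x))$; and (b) the endpoint bookkeeping of the KL inequality --- in particular that $F(\bar x)<F(x)<F(\bar x)+a$ forces $x\in\mathcal M$ and implies the corresponding strict lower and upper bounds for $\widetilde F(x,u(x))$, and that the case $\alpha=0$ (constant $\varphi'$) is covered verbatim by the same chain of inequalities.
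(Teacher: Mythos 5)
Your proof is correct, but it follows a genuinely different route from the paper's. The paper verifies the hypotheses of the inf-projection theorem \cite[Theorem~3.1]{YuPongLi22}: it identifies $\Argmin_u\widetilde F(\bar x,u)=\{\bar u\}$, checks $\partial\widetilde F(\bar x,\bar u)\neq\emptyset$, and then spends most of the proof establishing that $\widetilde F$ is level-bounded in $u$ locally uniformly in $x$ via a recession-function argument, after which the conclusion is delegated to the cited black-box result. You instead give a direct, self-contained transfer along the continuous section $x\mapsto(x,u(x))$ with $u(x)=\Phi'_+(|Y^Tx|)$: the Fenchel equality gives $\widetilde F(x,u(x))=F(x)$ exactly, Proposition~\ref{subdifftildeF} together with \eqref{partialF} gives $(\xi,0)\in\partial\widetilde F(x,u(x))$ for every $\xi\in\partial F(x)$ and hence ${\rm dist}(0,\partial\widetilde F(x,u(x)))\le{\rm dist}(0,\partial F(x))$, and the KL inequality for $F$ then drops out of the one for $\widetilde F$ by continuity of $u(\cdot)$ (which holds since a differentiable concave $\phi$ on $(0,\infty)$ has continuous derivative and $\phi'_+(0)=\ell$; the paper itself uses this continuity elsewhere). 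Your argument buys elementarity: it avoids both the level-boundedness verification and the external theorem, exploiting the fact that here the minimizer in $u$ is an explicit continuous single-valued map. The paper's route is more robust in that it would survive situations where the argmin is not a continuous selection, but for the structure at hand your shortcut is legitimate and complete; the only points that must be (and are) handled with care are exactly the ones you flag, namely that $F(x)<\infty$ forces $x\in\mathcal M$ so the value identity applies, and that the distance comparison goes in the direction needed for the KL inequality to transfer.
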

\begin{proof}
	Let $ \bar x\in {\rm dom}\,\partial F=\mathcal{M} $ (see Lemma~\ref{prop:partialF}). Note from the definition of $ \widetilde F $ that
	\[
	\inf\limits_u\widetilde F(\bar x,u)  = \inf\limits_{-u\in {\rm dom}\, \Psi^*} \{\Psi^*(-u)+\langle u, |Y^T \bar x|\rangle \} = -\Psi(|Y^T\bar x|)=F(\bar x),
	\]
	where the second equality holds thanks to the definition of Fenchel Conjugate and \cite[Theorem~12.2]{Ro70}. Moreover,  we have from \cite[Theorem~23.5]{Ro70} that
	\[
	\Argmin_{u}\widetilde{F}(\bar x, u) =  \{\bar u \},\ \ \ \ {\rm where}\ \bar u := -\nabla \Psi ( |Y^T \bar x | ).
	\]
	In particular, we have $\widetilde F(\bar x, \bar u) = F(\bar x)<\infty $, which implies that $(\bar x, \bar u)\in {\rm dom}\, \widetilde F$.  Now using Proposition~\ref{subdifftildeF}, it holds at $ (\bar x, \bar u)$ that
	\begin{equation}\label{levelbnd}
		\partial \widetilde F(\bar x, \bar u) = \begin{bmatrix}
			Y(\bar u\circ \partial \|Y^T \bar x\|_1) +\partial \delta_{\mathcal M}(\bar x) \\
			| Y^T\bar x|-\partial 	\Psi^*(-\bar u)
		\end{bmatrix} \ni \begin{bmatrix} Y(\bar u\circ \partial \|Y^T \bar x\|_1) +\partial \delta_{\mathcal M}(\bar x) \\ 0\end{bmatrix},
	\end{equation}
	where the last inclusion uses the definition of $\bar u$ and \cite[Theorem~23.5]{Ro70}. One can now see that
	\[
	\partial \widetilde F(\bar x, \bar u) \neq \emptyset.
	\]
	Hence, $ \widetilde F $ satisfies the KL property at $ (\bar x, \bar u) $ since $ \widetilde F $ is a KL function. Now, we can invoke \cite[Theorem~3.1]{YuPongLi22} to show that $ F $ also satisfies the KL property at $\bar x$ with exponent $ \alpha $ once we prove that $ \widetilde F $ is level-bounded in $ u $ locally uniformly in $ x $.
	
	To establish the required level-boundedness property, we suppose to the contrary that there exist $ x^*\in \mathcal M $, $ \beta \in \R$ and an unbounded sequence
	\[
	\{(x^k, u^k)\}\subseteq \{(x,u):\, x\in \mathcal M,\ \|x-x^*\|\le 1,\ \widetilde F(x,u)\le \beta  \}
	\]
	with $ \|u^k\| \rightarrow \infty $. Passing to a subsequence if necessary, we may assume without loss of generality that  $ x^k\rightarrow \widetilde x $ for some $ \widetilde x\in \mathcal{M}\cap \{x:\,\|x-x^*\|\le 1 \} $ and $ \frac{u^k}{\|u^k\|}\rightarrow d $ for some $ d$ with $\|d\|=1$. Next, we have from the definitions of $ \{(x^k,u^k)\} $ and the definition of $ \widetilde F $ that
	\[
	\beta \geq \widetilde F(x^k,u^k) = \Psi^*(-u^k)+\langle u^k, |Y^Tx^k|\rangle
	\]
	for all $ k $. It then follows that
	\[
	\frac{\beta}{\|u^k\|}\geq \frac{\Psi^*(-u^k)}{\|u^k\|}+\left\langle |Y^T x^k|, \frac{u^k}{\|u^k\|}\right\rangle.
	\]
	Passing to the limit inferior on both sides of the above inequality and using the definitions of $ \widetilde x $ and $ d $, we have
	\[
		0  \geq \langle |Y^T \widetilde x|, d\rangle +\liminf\limits_{k\rightarrow \infty} \frac{\Psi^*(-u^k)}{\|u^k\|}
		\overset{{\rm (a)}}{\geq}  \langle |Y^T \widetilde x |, d\rangle +(\Psi^*)^\infty (-d) \overset{{\rm (b)}}{=} \langle |Y^T \widetilde x |, d\rangle + \sup\limits_{s\in {\rm dom}\, \Psi}\{-\langle s, d\rangle \},
	\]
	where $ (\Psi^*)^\infty(s):=\liminf_{t\rightarrow\infty,\, s'\rightarrow s}\frac{\Psi^*(ts')}{t} $ is the recession function of $ \Psi^* $ and (a) uses \cite[Theorem~2.5.1]{Auslender03} and (b) follows from \cite[Theorem~2.5.4]{Auslender03}. Since $ {\rm dom}\, \Psi = \R^p $, it then follows from the above relation that $ d=0 $, which contradicts the fact that $ \|d\|=1 $. Thus, $\widetilde F $ is level-bounded in $ u $ locally uniformly in $ x $.	
\end{proof}

We now establish the converse implication of Theorem~\ref{KLrule1} under additional assumptions on the $\phi$ in \eqref{P0}.
\begin{theorem}\label{KLcalculus_H}
	Let $F $ and $\widetilde F$ be given in \eqref{F} and \eqref{Ftilde}, respectively. Suppose that $\phi$ is strictly concave and $\phi'_+:\R_+\to \R_+$ is locally Lipschitz continuous. If $F$ satisfies the KL property at $ \bar x\in\mathcal M $ with exponent $\alpha\in \left[\frac12,1\right)$, then $(\bar x,-\nabla\Psi(|Y^T\bar x|))\in {\rm dom}\,\partial \widetilde F$ and $\widetilde F$ satisfies the KL property at $(\bar x,-\nabla\Psi(|Y^T\bar x))$ with exponent $\alpha$.
\end{theorem}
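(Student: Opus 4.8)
The plan is to exploit that, near the point $(\bar x,\bar u)$ with $\bar u:=-\nabla\Psi(|Y^T\bar x|)$, the potential $\widetilde F$ is a well-controlled perturbation of $F$: by \eqref{infwF} and \cite[Theorem~23.5]{Ro70}, for each $x$ near $\bar x$ the map $u\mapsto\widetilde F(x,u)$ is minimized uniquely at $\bar u(x):=-\nabla\Psi(|Y^Tx|)$, which equals $\Phi'_+(|Y^Tx|)$, with minimum value $F(x)$, and the gap $\widetilde F(x,u)-F(x)$ equals the Bregman distance $\Psi^*(-u)-\Psi^*(-\bar u(x))-\langle\nabla\Psi^*(-\bar u(x)),-u+\bar u(x)\rangle$. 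First I would record what the two extra hypotheses provide. Strict concavity of $\phi$ makes $\psi$ in \eqref{psi} strictly convex on all of $\R$ (it is strictly convex on $(-\infty,0]$ and on $[0,\infty)$ and is $C^1$ at $0$), hence $\Psi$ is strictly convex and $\Psi^*$ is essentially smooth; moreover each $\psi'$ is continuous and strictly increasing, so every value $\psi'(s)$ lies in the interior of its range, whence $-\bar u(x)=\nabla\Psi(|Y^Tx|)\in{\rm int}\,{\rm dom}\,\Psi^*$ for all $x$, and $\partial\Psi^*$ is single-valued (equal to $\nabla\Psi^*$, with $\nabla\Psi^*(-\bar u(x))=|Y^Tx|$ by conjugacy) near $-\bar u$. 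In particular, applying Proposition~\ref{subdifftildeF} at $(\bar x,\bar u)$, the second block $|Y^T\bar x|-\partial\Psi^*(-\bar u)$ contains $0$ and the first block $Y(\bar u\circ\partial\|Y^T\bar x\|_1)+N_{\cal M}(\bar x)$ is nonempty, so $(\bar x,\bar u)\in{\rm dom}\,\partial\widetilde F$, which is the first assertion; note also $\widetilde F(\bar x,\bar u)=F(\bar x)$. Local Lipschitzness of $\phi'_+$ makes $\nabla\Psi$ Lipschitz, with some modulus $L_0$, on a box around $|Y^T\bar x|$; since $\nabla\Psi$ is there a continuous injection whose local inverse is $\nabla\Psi^*$, this yields the reverse bound $\|\nabla\Psi^*(w)-\nabla\Psi^*(w')\|\ge L_0^{-1}\|w-w'\|$ for $w,w'$ near $-\bar u$, and it also makes $x\mapsto\bar u(x)$ Lipschitz near $\bar x$.

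Next I would establish two elementary estimates on a small neighbourhood $V$ of $(\bar x,\bar u)$, for $(x,u)\in V\cap{\rm dom}\,\widetilde F$ (so $x\in{\cal M}$, $u\in\R^p_+$, $-u$ near $-\bar u$). Write $D:={\rm dist}(0,\partial\widetilde F(x,u))$ and $\beta:=\||Y^Tx|-\nabla\Psi^*(-u)\|=\|\nabla\Psi^*(-\bar u(x))-\nabla\Psi^*(-u)\|$. Since the subdifferential in Proposition~\ref{subdifftildeF} is the Cartesian product of its $x$- and $u$-blocks, $D^2$ is the sum of the two squared block-distances; the $u$-block distance is exactly $\beta$, so $D\ge\beta\ge L_0^{-1}\|u-\bar u(x)\|$. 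For the $x$-block, \eqref{partialF} and $\bar u(x)=\Phi'_+(|Y^Tx|)$ give $\partial F(x)=\{Y(\bar u(x)\circ\xi)+\lambda x:\ \xi\in\partial\|Y^Tx\|_1,\ \lambda\in\R\}$; replacing $\bar u(x)$ by $u$ changes each such element by at most $\|Y\|_2\|u-\bar u(x)\|$ (as $\|\xi\|_\infty\le1$), so the $x$-block distance is at least ${\rm dist}(0,\partial F(x))-\|Y\|_2\|u-\bar u(x)\|$. Combining and using $\|u-\bar u(x)\|\le L_0\beta\le L_0 D$ gives
\[
{\rm dist}(0,\partial F(x))\ \le\ (1+L_0\|Y\|_2)\,D .
\]
On the other hand, the Bregman identity above together with the elementary bound $\Psi^*(-u)-\Psi^*(-\bar u(x))-\langle\nabla\Psi^*(-\bar u(x)),-u+\bar u(x)\rangle\le\langle\nabla\Psi^*(-u)-\nabla\Psi^*(-\bar u(x)),\bar u(x)-u\rangle\le\beta\|u-\bar u(x)\|\le L_0\beta^2\le L_0 D^2$ yields $\widetilde F(x,u)-F(x)\le L_0 D^2\le L_0 M^{2-1/\alpha}D^{1/\alpha}$, where $M$ bounds $D$ on $V$ (the $x$-block of $\partial\widetilde F(x,u)$ always contains a point of norm $\le\|Y\|_2\|u\|$, and the $u$-block is a bounded singleton near $(\bar x,\bar u)$) and we use $2-1/\alpha\ge0$ because $\alpha\ge\frac12$. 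Finally, shrinking $V$ so that the KL inequality of $F$ at $\bar x$ (exponent $\alpha$, some constant $c$) applies whenever $F(\bar x)<F(x)<F(\bar x)+a$ (possible since $F$ is continuous on ${\cal M}$ by Lemma~\ref{flip}), I split: for $(x,u)\in V\cap{\rm dom}\,\widetilde F$ with $\widetilde F(x,u)>F(\bar x)=\widetilde F(\bar x,\bar u)$,
\[
\widetilde F(x,u)-\widetilde F(\bar x,\bar u)=\big(\widetilde F(x,u)-F(x)\big)+\big(F(x)-F(\bar x)\big),
\]
where the first summand is $\le L_0 M^{2-1/\alpha}D^{1/\alpha}$ as above, while the second is $\le\big(c^{-1}(1+L_0\|Y\|_2)D\big)^{1/\alpha}$ when $F(x)>F(\bar x)$ and is $\le0$ otherwise. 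Adding, $\widetilde F(x,u)-\widetilde F(\bar x,\bar u)\le C\,D^{1/\alpha}$ for a suitable $C>0$, which is precisely the KL property of $\widetilde F$ at $(\bar x,\bar u)$ with exponent $\alpha$.

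The main obstacle I anticipate is keeping the two error mechanisms under simultaneous control: the $x$-block of $\partial\widetilde F$ differs from $\partial F$ by an $O(\|u-\bar u(x)\|)$ term, while the potential gap $\widetilde F(x,u)-F(x)$ is $O(\|u-\bar u(x)\|^2)$, so one must show $\|u-\bar u(x)\|$ is itself dominated by $\beta$, hence by $D$; this is exactly where the reverse co-coercivity $\|\nabla\Psi^*(-u)-\nabla\Psi^*(-\bar u(x))\|\ge L_0^{-1}\|u-\bar u(x)\|$ — a consequence of local Lipschitzness of $\phi'_+$ — and the exponent restriction $\alpha\ge\frac12$ — ensuring a quadratic term is absorbed into a $D^{1/\alpha}$ term — are indispensable. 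A secondary point is that strict concavity of $\phi$ is what forces $\Psi^*$ to be essentially smooth, so that the $u$-block of $\partial\widetilde F$ is a single point and the block-distance computation is clean; I would package these convex-analytic facts (essential smoothness of $\Psi^*$, single-valuedness of $\partial\Psi^*$ near $-\bar u$, and the local Lipschitz/co-coercivity properties of $\nabla\Psi$, $\nabla\Psi^*$ and $\bar u(\cdot)$) as a short preliminary lemma before running the above argument.
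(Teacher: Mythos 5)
Your proof is correct and follows essentially the same route as the paper's: both arguments rest on essential smoothness of $\Psi^*$ (from strict concavity), the metric-regularity/inverse-Lipschitz bound relating $\|u+\nabla\Psi(|Y^Tx|)\|$ to $\||Y^Tx|-\nabla\Psi^*(-u)\|$, the quadratic bound on the gap $\widetilde F(x,u)-F(x)$, the $O(\|u-\bar u(x)\|)$ perturbation of the $x$-block of $\partial\widetilde F$ relative to $\partial F(x)$, and the restriction $\alpha\ge 1/2$ to absorb the squared term into a power $1/\alpha$. The differences are only organizational: you split the value gap additively and bound each piece by ${\rm dist}(0,\partial\widetilde F(x,u))^{1/\alpha}$, whereas the paper runs a single chain of lower bounds on the subgradient side, and you derive the metric-regularity estimate directly from the Lipschitz continuity of $\nabla\Psi$ rather than citing \cite[Theorem~9.43]{RoWe97}.
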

\begin{proof}
	Let $\bar u=-\nabla \Psi( | Y^T\bar x|)$. Then we see from \cite[Theorem~23.5]{Ro70} that
	\begin{equation}\label{domaineq}
	|Y^T\bar x|\in \partial \Psi^*(-\bar u)\ \ {\rm and}\ \ \widetilde F\left(\bar x, \bar u\right) = F(\bar x) < \infty,
	\end{equation}
	which yield in particular that $-\bar u\in {\rm dom}\, \partial \Psi^*$ and $\left(\bar x,\bar u \right) \in {\rm dom}\, \widetilde F$. These observations together with Proposition~\ref{subdifftildeF} show that $\left(\bar x,\bar u\right)\in {\rm dom}\,\partial \widetilde F$.
	
	Since $\phi$ is strictly concave, we see from the definition of $\psi$ in \eqref{psi} that $\psi$ is strictly convex and hence $\Psi$ is strictly convex. Then $\Psi^*$ is essentially smooth thanks to \cite[Theorem~26.3]{Ro70}. Now using \cite[Theorem~26.1]{Ro70}, we see that the set ${\rm dom}\, \partial \Psi^*$ is open and
	\[
	{\rm dom}\, \partial \Psi^*={\rm int}\left({\rm dom}\, \Psi^*\right);
	\]
	moreover, the gradient $\nabla \Psi^*(w)$ exists at any $w\in {\rm dom}\, \partial \Psi^*$.	
	
	Since ${\rm dom}\, \partial \Psi^*$ is open and recall that $(\bar x, -\bar u)\in \mathcal M\times {\rm dom}\, \partial \Psi^*$, we see that there exists $\epsilon > 0$ such that $-u \in {\rm dom}\,\partial \Psi^*$ whenever $\|u - \bar u\| \le \epsilon$. Then, according to \cite[Theorem~25.5]{Ro70}, the function $u\mapsto \nabla \Psi^*(-u)$ is continuous on $\{u:\; \|u - \bar u\| \le \epsilon\}$. Hence, upon recalling the first relation in \eqref{domaineq} and shrinking $\epsilon$ further if necessary, we may assume that
	\begin{equation}\label{less1}
		\||Y^Tx|-\nabla \Psi^*(-u) \| <1
	\end{equation}
	whenever $\max\{\|x - \bar x\|,\|u - \bar u\|\}\le\epsilon$.

Next, since $\phi'_+$ is locally Lipschitz, recalling the definition of $\psi$ in \eqref{psi} and shrinking $\epsilon$ further if necessary, we may assume in addition that $\nabla\Psi$ is globally Lipschitz on a compact convex set $\Upsilon$ containing $ \{ |Y^Tx |:\; \|x- \bar x\|\le \epsilon \}\cup\{\nabla \Psi^*(-u):\; \|u - \bar u\|\le\epsilon\}$.
	Then we have for any $(x,u)$ satisfying $x\in {\cal M}$  and $\max \{\|x - \bar x\|, \|u - \bar u \| \}\le\epsilon$ (which implies $-u\in {\rm dom}\,\partial\Psi^*$) that
	\begin{align}\label{valtilde}
			\widetilde F(x,u) &= \langle u, |Y^Tx|\rangle + \Psi^*(-u) =  F(x) + \Psi(|Y^Tx|) + \langle u, |Y^Tx|\rangle + \Psi^*(-u)\notag\\
			& = F(x) + \Psi(|Y^Tx|) - \Psi(\nabla \Psi^*(-u)) +\langle u,  |Y^Tx|-\nabla \Psi^*(-u)\rangle \notag\\
			&\le F(x) + \frac{\gamma}{2}\||Y^Tx|-\nabla \Psi^*(-u)\|^2,
	\end{align}
	where the third equality follows from the equality case for the Young's inequality (see \cite[Theorem~23.5]{Ro70}), and the inequality follows from the Lipschitz continuity of $\nabla\Psi$ on $\Upsilon$, where we let $\gamma$ denote the Lipschitz continuity modulus.
	
	Now, recall that $F$ satisfies the KL property with exponent $\alpha$ at $\bar x\in {\rm dom}\,\partial F = \mathcal M$. Thus, by shrinking $\epsilon$ further if necessary, there exist $c \in (0,1)$ such that
	\begin{equation}\label{2eq0}
		{\rm dist}(0,\partial F(x))^\frac1\alpha\ge c(F(x) - F(\bar x))
	\end{equation}
	whenever $ x\in \mathcal M $, $\|x - \bar x\|\le\epsilon$ and $F(x) < F(\bar x)+\epsilon$. In addition, since $\Psi$ has locally Lipschitz gradients, we see that $\partial \Psi^*$ is metrically regular at $-\bar u$ for $|Y^T\bar x|$; see \cite[Theorem~9.43]{RoWe97}. Shrinking $\epsilon$ and $c$ if necessary, we further have for every $(x,u)$ satisfying $\max\{\|x - \bar x\|,\|u - \bar u\|\}\le\epsilon$ (which implies $-u\in {\rm dom}\,\partial\Psi^*$) that
	\begin{equation}\label{metricreg}
		c\|u + \nabla \Psi(|Y^Tx|)\| \le \||Y^Tx| - \nabla \Psi^*(-u)\|.
	\end{equation}
	Furthermore, we note from the definition of $ \partial\|\cdot\|_1 $ that
	\begin{equation}\label{2eq88}
		\sup_{\|x - \bar x\|\le \epsilon}\sup_{y\in \partial \|Y^Tx\|_1}\|y\|_\infty \le 1,
	\end{equation}
	and there exists $M \ge 1$ such that
	\begin{equation}\label{2eq99}
		\sup_{\{x:\;\|x - \bar x\|\le \epsilon\}\cap \mathcal M}\|(I-xx^T)Y\| _2 \leq M.
	\end{equation}
	
	Finally, consider any $(x,u)$ satisfying $x\in {\cal M}$, $\max\{\|x - \bar x\|,\|u -\bar u\|\}\le\epsilon$ and
	\[
	\widetilde F(\bar x,\bar u) \le \widetilde F(x,u) < \widetilde F(\bar x,\bar u) + \epsilon.
	\]
	Then we have
	\begin{equation*}
		-u\in {\rm dom}\,\partial \Psi^*,\ \ (x,u)\in {\rm dom}\,\widetilde F\ \ {\rm and}\ \ F(\bar x) +\epsilon = \widetilde F(\bar x,\bar u) + \epsilon > \widetilde F(x,u)\ge F(x),
	\end{equation*}
	where the equality comes from \eqref{domaineq} and the last inequality follows from \eqref{infwF}. Moreover, we deduce from the two inclusions in the above display and Proposition~\ref{subdifftildeF} that
	\[
	\partial \widetilde F(x,u) = \begin{bmatrix}
		Y (u\circ \partial  \|Y^Tx \|_1 ) + \partial \delta_{\cal M}(x)\\
		|Y^Tx | - \partial \Psi^*(-u)
	\end{bmatrix}\neq \emptyset.
	\]
	For any such $(x,u)$, with $\mu := \sup\{3 |a|^\frac1\alpha + 3|b|^\frac1\alpha:\; a^2 + b^2 \le 1\} < \infty$, we have
	\begin{align*}
		&\mu\,{\rm dist}(0,\partial \widetilde F(x,u))^\frac1\alpha  = \mu\left(\sqrt{\inf\limits_{y\in\partial \left\|Y^Tx\right\|_1}\inf\limits_{\lambda\in\R} \left\|Y(u\circ y)+\lambda x\right\|^2 + \||Y^Tx| - \partial \Psi^*(-u)\|^2}\right)^\frac1\alpha\\
		& \overset{\rm (a)}= \mu\left(\sqrt{\left\|(I-xx^T)Y(u\circ \xi)\right\|^2 + \||Y^Tx| - \partial \Psi^*(-u)\|^2}\right)^\frac1\alpha\\
		& \geq 3 \|(I-xx^T)Y(u\circ \xi) \|^\frac1\alpha +3\||Y^Tx| - \nabla \Psi^*(-u)\|^\frac1\alpha\\
		& \overset{\rm (b)} \ge 2^{\frac1\alpha-1}\left ( \|(I-xx^T)Y(u\circ \xi) \|^\frac1\alpha + \| |Y^Tx | - \nabla \Psi^*(-u) \|^\frac1\alpha\right )+ \| |Y^Tx | - \nabla \Psi^*(-u) \|^\frac1\alpha\\
		& \overset{\rm (c)} \ge c^\frac1\alpha\left[2^{\frac1\alpha-1}\,\|(I-xx^T)Y(u\circ \xi)\|^\frac1\alpha + 2^{\frac1\alpha-1}\|u + \nabla \Psi(|Y^Tx|)\|^\frac1\alpha+\||Y^Tx| - \nabla \Psi^*(-u)\|^\frac1\alpha\right]\\
		& \overset{\rm (d)} \ge \left(cM^{-1}\right)^\frac1\alpha\bigg[2^{\frac1\alpha-1}\|(I-xx^T)Y(u\circ \xi)\|^\frac1\alpha\\
		&\quad\quad + 2^{\frac1\alpha-1}\|(I-xx^T)Y([\Phi'_+(|Y^Tx|)-u]\circ\xi)\|^\frac1\alpha +\||Y^Tx| - \nabla \Psi^*(-u)\|^\frac1\alpha\bigg]\\
		& \overset{\rm (e)} \ge \left(cM^{-1}\right)^\frac1\alpha\bigg[\|(I-xx^T)Y(\Phi'_+(|Y^Tx|)\circ \xi)\|^\frac1\alpha+\||Y^Tx| - \nabla \Psi^*(-u)\|^\frac1\alpha\bigg]\\
		& \overset{\rm (f)} \ge \left(cM^{-1}\right)^\frac1\alpha\left[{\rm dist}(0,\partial F(x))^\frac1\alpha + \||Y^Tx| - \nabla \Psi^*(-u)\|^\frac1\alpha\right] \\
		& \overset{\rm (g)}\ge c'\left[\frac1{c}{\rm dist}(0,\partial F(x))^\frac1\alpha + \frac\gamma{2}\||Y^Tx| - \nabla \Psi^*(-u)\|^\frac1\alpha\right]\\
		&\overset{\rm (h)}\ge c'\left[F(x) - F(\bar x) + \frac\gamma{2}\||Y^Tx| - \nabla \Psi^*(-u)\|^\frac1\alpha\right]\\
		& \overset{\rm (i)}\ge c'\left[F(x) - F(\bar x) + \frac\gamma{2}\||Y^Tx| - \nabla \Psi^*(-u)\|^2\right]\\
		& \ge c'\left[\widetilde F(x,u) - F(\bar x)\right] = c'\left[\widetilde F(x,u) - \widetilde F(\bar x,\bar u)\right],
	\end{align*}	
	where (a) follows from Lemma~\ref{prop:partialF} with
\[
 \xi\in \argmin_{y\in \partial  \|Y^Tx \|_1}  \|(I-xx^T)Y(u\circ y) \|,
 \]
 (b) holds because $\alpha\in \left [1/2, 1\right)$, (c) holds thanks to \eqref{metricreg} and the fact that $c\in (0,1)$, (d) uses \eqref{2eq88}, \eqref{2eq99} and the definition of $\Psi$ (see \eqref{psi}), (e) holds thanks the fact that $2^{\frac1\alpha -1}(\|a\|^{\frac1\alpha}+\|b\|^{\frac 1\alpha})\geq \|a+b\|^\frac 1\alpha $ for $\alpha<1$,  (f) follows Lemma~\ref{prop:partialF}, (g) holds with $c^\prime:= \big(cM^{-1}\big)^{1/\alpha}/\left(\frac1c+\frac \gamma 2\right)$, (h) follows from \eqref{2eq0}, (i) holds because of \eqref{less1} and the fact that $\alpha \in [1/2,1)$, and the last inequality follows from \eqref{valtilde} while the last equality comes from \eqref{domaineq}. Thus, $\widetilde F$ satisfies the KL property at $(\bar x,\bar u) = (\bar x, -\nabla \Psi(|Y^Tx|))$ with exponent $\alpha$.
\end{proof}

 \newpage

\noindent PEIRAN YU \\
Department of Computer Science \\
The University of Maryland\\
E-mail address: pyu123@umd.edu

\vspace{3mm}

\noindent LIAOYUAN ZENG\\
School of Mathematical Sciences\\
Zhejiang University of Technology\\
E-mail address: zengly@zjut.edu.cn

\vspace{3mm}
\noindent TING KEI PONG\\
Department of Applied Mathematics\\
The Hong Kong Polytechnic University\\
E-mail address: tk.pong@polyu.edu.hk

 \end{document}